\newtheorem{lemma}{Lemma}
\newtheorem{theorem}[lemma]{Theorem}
\newtheorem*{theorem*}{Theorem}
\newtheorem{proposition}[lemma]{Proposition}
\newtheorem{corollary}[lemma]{Corollary}
\newtheorem*{theorema}{Theorem A}
\newtheorem*{theoremb}{Theorem B}
\newtheorem*{theoremc}{Theorem C}
\newtheorem*{corollary*}{Corollary}
\theoremstyle{definition}
\newtheorem{definition}[lemma]{Definition}
\newtheorem{note}[lemma]{Note}
\newtheorem{example}[lemma]{Example}
\newtheorem{notation}[lemma]{Notation}
\numberwithin{lemma}{section}
\newcommand{\C}{\mathcal{C}}
\newcommand{\R}{\mathcal{R}}
\newcommand{\calE}{\mathcal{E}}
\newcommand{\calO}{\mathcal{O}}
\newcommand{\calP}{\mathcal{P}}
\newcommand{\calQ}{\mathcal{Q}}
\newcommand{\calM}{\mathcal{M}}
\newcommand{\calF}{\mathcal{F}}
\newcommand{\rmout}{\mathrm{out}}
\newcommand{\fD}{{{f}\mathcal{D}}}
\newcommand{\Mon}{\mathrm{Mon}}
\newcommand{\CoEnd}{\mathrm{CoEnd}}
\newcommand{\skippy}{\vspace{10 pt}}
\DeclareMathOperator{\Map}{\mathrm{Map}}
\DeclareMathOperator{\Aut}{\mathrm{Aut}}
\DeclareMathOperator{\interior}{\mathrm{int}}
\newcommand{\Getzler}{MR1256989}
\newcommand{\Voronov}{MR2131012}
\newcommand{\CohenVoronov}{MR2240287}
\newcommand{\Kaufmann}{MR2135554}
\newcommand{\James}{MR1010230}
\newcommand{\MenichiBVMorphism}{Menichi}
\newcommand{\KasselTuraev}{MR2435235}
\newcommand{\Salvatore}{MR2482075}
\newcommand{\McClureSmith}{MR2089084}
\newcommand{\ChasSullivan}{ChasSullivan}
\newcommand{\Fied}{Fiedorowicz}
\newcommand{\MayQuasi}{MR1070579}
\newcommand{\Tarje}{Tarje}
\title{Groups, cacti and framed little discs} 
\author{Richard Hepworth \\[10 pt]  {Copenhagen University}}
\date{}
\begin{document}
\maketitle

\begin{abstract}
\noindent 
Let $G$ be a topological group.
Then the based loopspace $\Omega G$ of $G$ is an algebra over the cacti operad, while the double loopspace $\Omega^2 BG$ of the classifying space of $G$ is an algebra over the framed little discs operad.
This paper shows that these two algebras are equivalent, in the sense that they are weakly equivalent $\calE$-algebras, where $\calE$ is an operad weakly equivalent to both framed little discs and cacti.
We recover the equivalence between cacti and framed little discs, and Menichi's isomorphism between the BV-algebras $H_\ast(\Omega G)$ and $H_\ast(\Omega^2 BG)$.
\end{abstract}

%Let G be a topological group.  Then the based loopspace of G is an algebra over the cacti operad, while the double loopspace of the classifying space of G is an algebra over the framed little discs operad.  This paper shows that these two algebras are equivalent, in the sense that they are weakly equivalent E-algebras, where E is an operad weakly equivalent to both framed little discs and cacti.  We recover the equivalence between cacti and framed little discs, and Menichi's isomorphism between the BV-algebras obtained by taking the homology of the loopspace of G and of the double loopspace of BG.

\section{Introduction}

The \emph{framed little discs} operad $\fD$ was introduced by Getzler in \cite{\Getzler}.
The double loopspace $\Omega^2 X$ of any based space $X$ is naturally an algebra over $\fD$, and any such algebra is weakly equivalent to one of the form $\Omega^2 BG$ for some topological group $G$ (with non-degenerate basepoint, which we assume throughout).

The \emph{cacti operad} was introduced by Voronov \cite{\Voronov} in order to understand the BV-algebras $\mathbb{H}_{\ast}(LM)$ of Chas and Sullivan \cite{\ChasSullivan}.
We will use a variant of cacti introduced by Salvatore in \cite{\Salvatore}, and will denote it by $\C$.
Salvatore used a version of the Deligne conjecture to show that for any topological group $G$, the loopspace $\Omega G$ is an algebra over $\C$.

So, given a topological group $G$ we have an $\fD$-algebra $\Omega^2 BG$ and a $\C$-algebra $\Omega G$.
There is a standard weak equivalence $\Omega G\simeq\Omega^2 BG$, while $\C$ and $\fD$ are weakly equivalent operads.
It is therefore natural to ask whether $\Omega G$ and $\Omega^2 BG$ are related as \emph{algebras}.
The theorem below shows that this is indeed the case.

\begin{theorem*}
There is an operad $\calE$ equipped with weak equivalences of operads
\[\fD\xleftarrow[\quad\pi_1\quad]{\simeq}\calE\xrightarrow[\quad\pi_2\quad]{\simeq}\C\]
such that, for any topological group $G$, there is an $\calE$-algebra $\varepsilon G$ equipped with weak equivalences of $\calE$-algebras
\[ \Omega^2 BG \xleftarrow[\quad p_1 \quad]{\simeq} \varepsilon G \xrightarrow[\quad p_2 \quad]{\simeq} \Omega G.\]
Here $\Omega^2 BG$ and $\Omega G$ are regarded as $\calE$-algebras using $\pi_1$ and $\pi_2$ respectively, and
$p_1$ and $p_2$ are compatible with the standard weak equivalence $h\colon\Omega G\xrightarrow{\simeq}\Omega^2 BG$ in the sense that $h\circ p_2\simeq p_1$.
\end{theorem*}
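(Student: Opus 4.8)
The plan is to obtain both zig-zags from a single geometric one, by recasting the two given actions as coendomorphism actions on mapping spaces out of $BG$. Since $G$ is a topological group there is a standard equivalence $G\xrightarrow{\simeq}\Omega BG$, so $\Omega G\simeq\Map_\ast(S^1,\Omega BG)\simeq\Map_\ast(S^1\wedge S^1,BG)$, while $\Omega^2 BG=\Map_\ast(S^1\wedge S^1,BG)$ on the nose; up to the equivalence $h$, both $\calE$-algebras of the theorem will be realised as mapping spaces $\Map_\ast(\Sigma,BG)$ out of two-sphere-shaped based spaces $\Sigma$. The first step is to build these presentations together with honest maps of operads into the relevant coendomorphism operads: a framed little discs configuration determines a pinch map $S^2\to\bigvee_n S^2$, which assembles $S^2$ into a coalgebra over $\fD$, i.e.\ a map $\fD\to\CoEnd(S^2)$, and applying $\Map_\ast(-,BG)$ recovers the usual $\fD$-action on $\Omega^2 BG$; and --- reworking Salvatore's use of the Deligne conjecture --- a cactus with $n$ lobes, suspended in the coordinate where $G\simeq\Omega BG$ lives, determines a based map $S^2\to\bigvee_n S^2$ exhibiting a ``cactus-shaped'' model $Z$ of $S^2$ as a coalgebra over $\C$, i.e.\ a map $\C\to\CoEnd(Z)$ whose image under $\Map_\ast(-,BG)$ is the $\C$-action on $\Omega G$.

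The heart of the argument is then a universal ``disc-with-cactus'': an operad $\calE$ whose $n$-th space parametrises a framed little discs configuration together with a cactus refining the same degeneration of the sphere, an associated $\calE$-coalgebra $Y$, and maps of operads $\fD\xleftarrow{\pi_1}\calE\xrightarrow{\pi_2}\C$ covered by based weak equivalences of coalgebras $S^2\xleftarrow{q_1}Y\xrightarrow{q_2}Z$. Choosing a compatible cactus inside a disc configuration, or a compatible disc configuration refining a cactus, is a contractible choice in each direction, so $\pi_1$ and $\pi_2$ are levelwise fibrations with contractible (in particular nonempty) fibres, hence weak equivalences of operads; this step also reproves the known equivalence between cacti and framed little discs. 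Setting $\varepsilon G=\Map_\ast(Y,BG)$, this is an $\calE$-algebra via $\Map_\ast(-,BG)$ applied to $\calE\to\CoEnd(Y)$, it is natural in $G$, and $p_1=q_1^\ast$, $p_2=q_2^\ast$ are weak equivalences of $\calE$-algebras onto $\Omega^2 BG$ and, after the identification above, $\Omega G$.

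Finally, the compatibility $h\circ p_2\simeq p_1$ should be built in at the level of $Y$: the equivalence $h$ is itself $\Map_\ast(-,BG)$ applied to a zig-zag of based maps between $Z$ and $S^2$ (both being two-fold suspension-type models for $BG$), and one arranges $q_1$ and $q_2$ to fit a diagram realising that zig-zag, so that $h\circ p_2$ and $p_1$ become $\Map_\ast(-,BG)$ of homotopic maps out of $Y$; taking homology recovers Menichi's isomorphism. The main obstacle I expect is rigidity: the coendomorphism formalism needs \emph{strict} operad maps $\fD\to\CoEnd(S^2)$, $\C\to\CoEnd(Z)$ and $\calE\to\CoEnd(Y)$, whereas Salvatore's action is produced only up to coherent homotopy, so one must first replace it by a strictly acting model, verify that the strictified coaction on $Z$ is the correct one, and then construct $Y$ and $\calE$ carrying compatible strict structures that simultaneously realise $q_1$, $q_2$ and the $h$-zig-zag. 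Once these rigid, functorial models are in place, every weak-equivalence and compatibility assertion in the theorem follows formally from fibrewise contractibility and from the fact that $\Map_\ast(-,BG)$ preserves weak equivalences between well-pointed spaces.
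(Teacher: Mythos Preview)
Your proposal contains a genuine gap at its most critical step. You assert that ``choosing a compatible cactus inside a disc configuration, or a compatible disc configuration refining a cactus, is a contractible choice in each direction,'' and conclude that $\pi_1$ and $\pi_2$ are fibrations with contractible fibres. This is exactly the hard part of the theorem, and it is not true at the level you suggest. In the paper's construction, the fibre of $\pi_1\times\pi_2\colon\calE(n)\to\fD(n)\times\C(n)$ over $(a,c)$ is the space $\Map_\partial(|a|,|c|)$ of boundary-preserving homotopy equivalences, and this space is \emph{not} contractible: it is homotopy equivalent to the discrete group $PRB_n$ (Proposition~\ref{FibreHomotopyTypeProposition}). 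That $\pi_1$ and $\pi_2$ are nonetheless weak equivalences requires an analysis of the long exact sequence of the quasifibration $\Pi$, together with an explicit computation of the monodromy action of $\pi_1(\fD(n)\times\C(n))\cong PRB_n\times PRB_n$ on $\pi_0(\mathcal{F}_n)\cong PRB_n$, showing it is $(\gamma,\delta)\cdot\phi=\delta\phi\gamma^{-1}$ (Proposition~\ref{ActionOnTheFibreProposition}). Only then does exactness force $\pi_1(\calE(n))$ onto the diagonal copy of $PRB_n$. Whatever your model of $\calE$ turns out to be, an analogous computation will be needed, and your sketch gives no indication of how to carry it out; ``contractible choice'' is asserted, not argued.

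There is a second difficulty with the coendomorphism formalism for $\C$. You want to realise Salvatore's $\C$-action on $\Omega G$ as $\Map_\ast(-,BG)$ applied to a strict $\C$-coalgebra $Z\simeq S^2$. But that action (see \S\ref{OmegaGSubsection}) genuinely uses the group multiplication: given loops $\gamma_i$ and a cactus $c$, one builds $\alpha\colon|c|\to G$ by \emph{left-translating} each $\gamma_i$ so that adjacent lobes match up. The incoming boundary maps $\partial_i\colon S^1\to|c|$ do not agree at the basepoint, so there is no map $\bigvee_n S^1\to|c|$ and hence no honest coalgebra map $S^1\to\bigvee_n S^1$. Suspending repairs the basepoint issue, but the resulting equivalence $\bigvee_n S^2\xrightarrow{\simeq}\Sigma|c|$ points the wrong way and must be inverted; this cannot be done strictly and compatibly with operad composition. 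You flag this obstacle at the end, but resolving it is not bookkeeping --- it is where geometric content lives. The paper sidesteps both problems by working directly with the realizations $|a|$, $|c|$ and the mapping-operad construction, and by defining $\varepsilon G$ as maps $D^2\to EG$ (not $BG$) sending $S^1$ into the $G$-orbit of the basepoint, so that the group structure enters through the principal action on $EG$ rather than through any coalgebra inversion.
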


The operad $\calE$ is defined as follows.
Given elements $a\in\fD(n)$ and $c\in\C(n)$, write $|a|$ for the space obtained from $D^2$ by deleting the interiors of the little discs of $a$, and write $|c|$ for the configuration of circles underlying $c$.
Then $\calE(n)$ is the space of triples $(a,c,f)$, where $a\in\fD(n)$, $c\in\C(n)$, and where $f\colon |a|\to |c|$ is a homotopy equivalence satisfying the following two boundary conditions.
First, the boundary of the $i$-th little disc is sent into $|c|$ by the inclusion of the $i$-th lobe.
Second, the boundary of the big disc is sent into $|c|$ by the pinch map.
A typical element $(a,c,f)\in\calE(2)$ is depicted below.
\begin{center}
\begin{lpic}[b(15 pt)]{figures/EElement(,1 in)}
\lbl{38,-10;$a$}
\lbl{143,-10;$c$}
\lbl{248,-10;$f$}
\lbl{19.5,24;$\scriptstyle 1$}
\lbl{56.5,24;$\scriptstyle 2$}
\lbl{126,35;$\scriptstyle 1$}
\lbl{159,35;$\scriptstyle 2$}
\end{lpic}
\end{center}
There are forgetful maps
\[\fD(n)\xleftarrow{\quad\pi_1\quad}\calE(n)\xrightarrow{\quad\pi_2\quad}\C(n).\]
The first part of our theorem can now be restated as follows.
\begin{theorema}
The spaces $\calE=\{\calE(n)\}$ form a topological operad, and $\pi_1$ and $\pi_2$ are weak equivalences of operads.
\end{theorema}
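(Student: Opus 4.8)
The plan is to prove Theorem A in three stages: first check that $\calE$ is a well-defined topological operad, then show $\pi_2\colon\calE(n)\to\C(n)$ is a weak equivalence, and finally deduce the same for $\pi_1$.

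For the operad structure, I would specify the structure maps $\calE(n)\times\calE(k_1)\times\cdots\times\calE(k_n)\to\calE(k_1+\cdots+k_n)$ by composing separately in the $\fD$ and $\C$ coordinates, so $\pi_1$ and $\pi_2$ are automatically operad maps once everything is shown continuous and associative; the content is that the homotopy-equivalence coordinate $f$ composes correctly. Given $(a,c,f)$ and $(a_i,c_i,f_i)$, the inserted little discs of $a$ are filled by copies of $|a_i|$, and the boundary conditions on $f$ and the $f_i$ are exactly what is needed to glue the maps $f, f_1,\dots,f_n$ along the disc-boundary/lobe-inclusion identifications into a single homotopy equivalence $|a(a_1,\dots,a_n)|\to|c(c_1,\dots,c_n)|$; one checks the result still satisfies the two boundary conditions and that the gluing depends continuously on the data. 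Unitality and equivariance are routine. I would be slightly careful that the glued map is again a homotopy equivalence --- this follows because gluing homotopy equivalences of nice (CW/ANR) spaces along a common subspace over which they agree yields a homotopy equivalence, using that $|a|$, $|c|$ and their boundary circles are cofibrations.

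The heart of the argument is that $\pi_2\colon\calE(n)\to\C(n)$ is a weak equivalence. I would analyse the fibre of $\pi_2$ over a fixed $c\in\C(n)$: it is the space of pairs $(a,f)$ with $a\in\fD(n)$ and $f\colon|a|\to|c|$ a boundary-compatible homotopy equivalence. I would first contract away the $\fD$-coordinate. The space $\fD(n)$ is contractible onto a standard configuration $a_0$ (the $n$ little discs can be moved to a fixed position; $\fD(n)\simeq F(\mathrm{int}\,D^2,n)\times(S^1)^n$, which has the homotopy type of the configuration space but in any case is connected with understood homotopy type) --- more to the point, the projection $(a,f)\mapsto a$ and the forgetful map together let me identify the fibre, up to homotopy, with the space of boundary-compatible self-homotopy-equivalences (or rather maps) from a fixed genus-zero surface $|a_0|$ to $|c|$ with prescribed behaviour on all boundary circles. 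Because $|a_0|$ is an $(n+1)$-holed sphere and $|c|$ is a wedge-like configuration of circles, a boundary-compatible map $f$ is determined up to contractible choice by homotopy-theoretic data, and the boundary conditions pin this data down completely: the resulting space of such $f$ is contractible. Hence each fibre of $\pi_2$ is (weakly) contractible, and since $\pi_2$ is a fibration (or at least a quasifibration --- I would verify the relevant local-triviality or use a gluing/Dold argument over the cells of $\C(n)$), it is a weak equivalence. The symmetric argument, with the roles of $\fD$ and $\C$ exchanged, shows $\pi_1$ is a weak equivalence: over a fixed $a\in\fD(n)$, the fibre is the space of $(c,f)$, and one contracts the $\C$-coordinate and shows the remaining space of boundary-compatible homotopy equivalences $|a|\to|c|$ is contractible by the same surface-versus-graph analysis.

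The main obstacle I expect is the contractibility of the space of boundary-compatible homotopy equivalences $f\colon|a|\to|c|$ for fixed $a$ and $c$ --- this is where the geometry of cacti really enters. One must show not just that such an $f$ exists (existence of the homotopy equivalence respecting the $n+1$ marked circles), but that the whole space of them is contractible, and then assemble these contractions continuously as $a$ and $c$ vary, i.e.\ genuinely over the total spaces rather than fibrewise. I would handle this by exhibiting an explicit deformation: push $|a|$ onto a spine (a wedge of circles / ribbon graph) compatibly with the pinch map and the lobe inclusions, reducing $f$ to a map of graphs rel vertices, whose space of choices is then visibly contractible (mapping spaces into $K(\pi,1)$'s rel boundary). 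Getting the quasifibration property for $\pi_1$ and $\pi_2$, so that fibrewise contractibility upgrades to a weak equivalence of total spaces, is the second delicate point; I would either set things up so that $\pi_1,\pi_2$ are locally trivial (using that $\fD(n)$ and $\C(n)$ admit nice local models) or invoke Dold's theorem after a suitable open cover.
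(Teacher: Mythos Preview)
Your outline for the operad structure is fine and matches the paper's gluing construction for the mapping operad; the point that the glued map is again a homotopy equivalence follows, as you say, from the pasting squares being homotopy pushouts.

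The weak-equivalence argument, however, has a genuine gap. You assert that for fixed $a\in\fD(n)$ and $c\in\C(n)$ the space $\Map_\partial(|a|,|c|)$ of boundary-compatible homotopy equivalences is contractible, because ``the boundary conditions pin this data down completely.'' This is false: that space has $\pi_0\cong PRB_n$, the pure ribbon braid group. Concretely, retract $|a|$ onto the union of the boundary circles and arcs $l_1,\dots,l_n$ from the global basepoint to the little discs; the boundary conditions fix $f$ on the circles, but the image $f\circ l_i$ can be any loop $w_i$ in $|c|$, and the requirement that $f$ be a homotopy equivalence compatible with $\partial_\rmout$ only forces $(w_1,\dots,w_n)$ to lie in a set $W_n\cong PRB_n$. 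This is exactly Proposition~\ref{FibreHomotopyTypeProposition} in the paper. Relatedly, your remark that one can ``contract away the $\fD$-coordinate'' is wrong: $\fD(n)\simeq F(\mathbb{R}^2,n)\times(S^1)^n$ is a $K(PRB_n,1)$, not contractible.

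What is true is that the \emph{total} fibre of $\pi_2$ over $c$ --- the space of pairs $(a,f)$ --- is weakly contractible, but this is because the $K(PRB_n,1)$ base $\fD(n)$ and the $PRB_n$-many components of $\Map_\partial(|a|,|c|)$ cancel via a free transitive monodromy action, not because either piece is trivial. Proving that cancellation requires computing the action of $\pi_1(\fD(n))$ on $\pi_0$ of the fibre, which is precisely the content of the paper's approach. The paper organizes this by studying the product $\Pi=\pi_1\times\pi_2\colon\calE^\circ(n)\to\fD^\circ(n)\times\C(n)$, proving it is a quasifibration (Theorem~C), identifying the fibre with $PRB_n$, and showing the $(PRB_n\times PRB_n)$-action on it is $(\gamma,\delta)\cdot\phi=\delta\phi\gamma^{-1}$; the long exact sequence then forces $\pi_1(\calE^\circ(n))$ onto the diagonal. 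Your strategy could be made to work, but only after inserting essentially this same computation.
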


In particular, we recover the fact that cacti and framed little discs are weakly equivalent.
This was stated by Voronov~\cite{\Voronov} and first proved by Kaufmann~\cite{\Kaufmann}.
A different proof has since been given by Bargheer \cite{\Tarje}.
In those accounts, as in Theorem~A, one constructs some intermediate operad $\calO$ equipped with weak equivalences to both $\fD$ and $\C$.
In Kaufmann's proof  $\calO$ is obtained using a recognition principle of Fiedorowicz~\cite{\Fied}, while in Bargheer's proof one takes $\calO=\mathcal{W}\fD$, the Boardman-Vogt $\mathcal{W}$-construction on $\fD$.
The advantage of Theorem~A is that $\calE$ is constructed using only the geometry of the original operads $\fD$ and $\C$.
Further, it allows us to construct a natural $\calE$-algebra $\varepsilon G$ weakly equivalent to both $\Omega G$ and $\Omega^2 BG$.
Note, however, that Theorem~A still relies on Kaufmann's computation of the weak homotopy type of the $\C(n)$.

Now we describe the $\calE$-algebra $\varepsilon G$.
Let $EG\to BG$ be the universal principal $G$-bundle.
Then $\varepsilon G$ is the space of based maps $D^2\to EG$ that send $S^1$ into the orbit of the basepoint.
There are maps
\[\Omega^2 BG \xleftarrow{\quad p_1\quad}\varepsilon G\xrightarrow{\quad p_2 \quad} \Omega G\]
given respectively by projecting from $EG$ to $BG$ and by restricting from $D^2$ to $S^1$.
These are compatible with $h\colon\Omega G\xrightarrow{\simeq}\Omega^2 BG$ in the sense that $h\circ p_2\simeq p_1$.
We can now restate the second part of our theorem as follows.

\begin{theoremb}
The space $\varepsilon G$ is an $\calE$-algebra, and the maps $p_1$ and $p_2$ are weak equivalences of $\calE$-algebras.
\end{theoremb}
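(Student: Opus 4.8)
The plan is to write down an explicit $\calE$-action on $\varepsilon G$, then observe that $p_1$ and $p_2$ are maps of $\calE$-algebras more or less by inspection of the formula, and finally prove that they are weak homotopy equivalences by an elementary obstruction-theoretic argument using that $EG$ is contractible. To define the action, fix $(a,c,f)\in\calE(n)$ and $\phi_1,\dots,\phi_n\in\varepsilon G$, write $\psi_i=\phi_i|_{S^1}\colon S^1\to G$ for the underlying loops, and let $d_1,\dots,d_n\subset D^2$ be the little discs of $a$, so that $D^2$ is covered by $|a|$ together with the $d_i$. I would recall from the construction of the $\C$-action on $\Omega G$ that the cactus product of $\psi_1,\dots,\psi_n$ is read off from a continuous family of maps $g=g_{c,\psi_\bullet}\colon|c|\to G$ whose restriction to the $i$-th lobe is $\psi_i$ translated by a group element $\rho_i=\rho_i(c,\psi_\bullet)$ (continuous in all the data, and the identity on the root lobe), with $g$ composed with the pinch map $S^1\to|c|$ equal to the cactus product. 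Using the $G$-action on $EG$ I would then define the output $\Phi=\theta_{(a,c,f)}(\phi_1,\dots,\phi_n)\colon D^2\to EG$ to be $g\circ f$ on $|a|$ and $\rho_i\cdot(\phi_i\circ\lambda_i^{-1})$ on each $d_i$, where $\lambda_i\colon D^2\xrightarrow{\cong}d_i$ is the $i$-th framed embedding of $a$. The first boundary condition in the definition of $\calE(n)$ makes these pieces agree along the circles $\partial d_i$, the second sends $\partial D^2=S^1$ into $G$ via $g$ and the pinch map, and the basepoint conventions make $\Phi$ a based map; hence $\Phi\in\varepsilon G$, and it depends continuously on everything in sight.

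Next I would verify the operad axioms. The unit of $\calE$ returns $\phi$ and $\Sigma_n$-equivariance is immediate from the construction. Associativity is the substantive axiom: by Theorem~A the operad structure on $\calE$ is obtained by composing simultaneously in $\fD$, in $\C$, and on the homotopy equivalences $f$, and under this composition the disc parts of $\Phi$ combine by associativity of the $\fD$-action on $\Omega^2 BG$, the maps $g$ combine by associativity of the $\C$-action on $\Omega G$, and the translating elements $\rho_i$ multiply consistently because they already do so for the $\C$-action; so associativity for $\theta$ follows from a diagram chase over these ingredients.

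I would then read off that $p_1$ and $p_2$ are $\calE$-algebra maps directly from the formula. Composing $\Phi$ with the projection $EG\to BG$ collapses $|a|$ to the basepoint of $BG$ (because $g$ lands in the fibre $G$) and erases the translations $\rho_i$ on the $d_i$, leaving exactly the $\fD$-action of $a=\pi_1(a,c,f)$ on $p_1(\phi_1),\dots,p_1(\phi_n)$; restricting $\Phi$ to $S^1$ gives $g$ composed with the pinch map, which is the $\C$-action of $c=\pi_2(a,c,f)$ on $\psi_1,\dots,\psi_n=p_2(\phi_1),\dots,p_2(\phi_n)$. The compatibility $h\circ p_2\simeq p_1$ follows because the space of extensions over $D^2$ of a based loop $S^1\to EG$ is weakly contractible, so projecting $\phi$ to $BG$ and coning off $\phi|_{S^1}$ agree up to a canonical homotopy.

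Finally, for the weak equivalences I would observe that $\varepsilon G$ is at once the preimage of $\Omega^2 BG\subset\Map_*(D^2,BG)$ under the fibration $\Map_*(D^2,EG)\to\Map_*(D^2,BG)$ induced by $EG\to BG$, and the preimage of $\Omega G\subset\Map_*(S^1,EG)$ under the fibration $\Map_*(D^2,EG)\to\Map_*(S^1,EG)$ induced by $S^1\hookrightarrow D^2$; in particular $p_1$ and $p_2$ are fibrations. The fibre of $p_1$ over a point of $\Omega^2 BG$ is the space of based lifts to $EG$, a torsor over $\Map_*(D^2,G)$, which is weakly contractible because $D^2$ admits a contraction to its basepoint; the fibre of $p_2$ over a loop is the space of based extensions over $D^2$, which is weakly contractible because $EG$ is. Being fibrations with weakly contractible fibres, $p_1$ and $p_2$ are weak homotopy equivalences, and since they are maps of $\calE$-algebras they are weak equivalences of $\calE$-algebras. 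I expect the main obstacle to be the first half of the argument: isolating the family $g_{c,\psi_\bullet}$ and the group elements $\rho_i$ from the $\C$-action and checking associativity, since that is where the operad structure on $\calE$ supplied by Theorem~A has to interlock simultaneously with both of the known actions and with the bookkeeping of the translating group elements; once the $\calE$-action is in place, the assertions about $p_1$ and $p_2$ are essentially routine.
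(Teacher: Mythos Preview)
Your proposal is correct and follows essentially the same approach as the paper: the $\calE$-action you write down is exactly the paper's Definition~\ref{omegaGAlgebraDefinition} (your $g_{c,\psi_\bullet}$ is the map $\alpha$ of Lemma~\ref{PatchingLemma}, and your $\rho_i$ are the left-translation elements appearing there), and your argument that $p_1,p_2$ are weak equivalences via fibrations with contractible fibres matches Proposition~\ref{varepsilonGIsEquivalentProposition}. The only cosmetic difference is that you deduce the fibration property of $p_1,p_2$ by exhibiting them as pullbacks of known fibrations, whereas the paper verifies the homotopy lifting property directly; both are standard.
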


We recover a result of Menichi~\cite{\MenichiBVMorphism} which states that $h_\ast\colon H_\ast(\Omega G)\to H_\ast(\Omega^2 BG)$ is an isomorphism of BV-algebras.
(Menichi's result holds in the more general case that $G$ is a grouplike topological monoid.)

\skippy

\noindent 
The key ingredient in our paper is the following common feature of the operads $\fD$ and $\C$.
To \emph{elements} $a\in\fD(n)$ and $c\in\C(n)$ we can associate the \emph{spaces} $|a|$ and $|c|$ defined above, which we call the \emph{realizations} of $a$ and $c$.
The realizations come together with \emph{incoming boundary maps}
\[
\partial_1,\ldots,\partial_n\colon S^1\to |a|,\qquad \partial_1,\ldots,\partial_n\colon S^1\to |c|
\]
and \emph{outgoing boundary maps}
\[
\partial_\rmout\colon S^1\to |a|,\qquad \partial_\rmout\colon S^1\to |c|.
\]
Together, these realizations and boundary maps form what we call \emph{realization systems} for $\fD$ and $\C$.
This means that they are compatible with the symmetry maps, composition maps, and topology of the original operads $\fD$ and $\C$.
Most importantly, the realization $|x\circ_i y|$ of the composite of $x$ and $y$ is the pushout of $\partial_i\colon S^1\to |x|$ and $\partial_\rmout\colon S^1\to |y|$.
This fact is easily verified for both $\fD$ and $\C$, and leads directly to the definition of composition in $\calE$.

The paper contains a detailed development of realization systems.
The crucial result takes two operads $\calP$ and $\calQ$ with realization systems and constructs a \emph{mapping operad} $\calM$.
Points of $\calM(n)$ are triples $(p,q,f)$ where $p\in\calP(n)$, $q\in\calQ(n)$ and $f\colon |p|\to |q|$ is a map satisfying $f\circ\partial_i=\partial_i$ and $f\circ\partial_\rmout=\partial_\rmout$.
The operad $\calE$ is obtained in the case $\calP=\fD$, $\calQ=\C$ by restricting to the suboperad of $\calM$ consisting of triples $(p,q,f)$ in which $f$ is a homotopy equivalence.
The three algebras studied in the paper, namely the $\fD$-algebra $\Omega^2 BG$, the $\C$-algebra $\Omega G$, and the $\calE$-algebra $\varepsilon G$, are also described in terms of the realization systems for $\fD$ and $\C$.

\skippy 
\noindent The paper begins in Section~\ref{RealizationSection} with a discussion of realization systems, and proves that from a pair of operads with realization systems, one obtains a {mapping operad} $\calM$.
Sections~\ref{fDSection} and \ref{CSection} recall framed little discs and cacti, and describe their realization systems.
Then Section~\ref{ESection} applies the mapping operad construction to $\calP=\fD$ and $\calQ=\C$ to obtain $\calE$.

Section~\ref{AlgebrasSection} uses the realization systems to describe the actions of $\fD$ on $\Omega^2BG$, of $\C$ on $\Omega G$, and of $\calE$ on $\varepsilon G$.
With these ingredients in place Theorem~B follows without difficulty.

It remains to complete the proof of Theorem~A by showing that $\pi_1$ and $\pi_2$ are weak equivalences.
The structure of the proof is explained in detail in Section~\ref{ProofOfEquivalenceSection}.
It is shown there that Theorem~A follows from the computation of a certain long exact sequence, which is carried out in Section~\ref{LongExactSequenceSection}, and from a further result, Theorem~C.

Theorem~C states roughly that the projection $\pi_1\times\pi_2\colon\calE(n)\to\fD(n)\times\C(n)$ is a quasifibration.
It relies on some basic results on mapping spaces which are given in Section~\ref{MappingSpacesSection}, and on a study of the fibrewise structure of the realizations of $\fD$ and $\C$, which is carried out in Sections~\ref{RfDFibrewiseSection} and~\ref{RCFibrewiseSection}.
The proof of Theorem~C is completed in Section~\ref{QuasiFibrationProofSection}.

An appendix recalls some basic results of fibrewise topology.

\section{Realization systems for operads}\label{RealizationSection}

In this section we introduce \emph{realization systems} for an operad $\calO$.
A realization system with boundaries $X$ is a rule that assigns to each element $x\in\calO(n)$ a space $|x|$, the \emph{realization of $x$}, equipped with $(n+1)$ different maps $X\to |x|$.
The rule carries further structure and is subject to a number of axioms, all of which roughly amount to saying that the spaces $|x|$ are compatible with symmetry and composition in $\calO$, and vary continuously as $x$ varies within $\calO(n)$.
This is made precise in \S\ref{RealizationPropertiesSubsection}, and two simple examples are given in \S\ref{RealizationExamplesSubsection}.

Then in \S\ref{RealizationMappingSubsection} we define the \emph{mapping operad}.
Given operads $\calP$ and $\calQ$, both equipped with realizations, there is an associated mapping operad.
It lies over both $\calP$ and $\calQ$, and its elements are triples consisting of points $p\in\calP(n)$, $q\in\calQ(n)$, and a map $|p|\to|q|$ satisfying boundary conditions.
The operad $\calE$ of Theorem~A will be constructed as a suboperad of such a mapping operad.

For us, all operads are $\Sigma$-operads with a $0$-th space, which we do not insist be equal to a single point.
Composition within operads will be denoted using either the symbol $\gamma$ or the symbol $\circ_i$, for example $\gamma(x;y_1,\ldots,y_n)$ or $x\circ_i y$.

\subsection{Realization systems}\label{RealizationPropertiesSubsection}

Fix a compact, Hausdorff topological space $X$ and a topological operad $\calO$.
The following three definitions together define the notion of a realization system.
The definitions are somewhat abstract, and so the reader may find it helpful to bear in mind the example of framed little discs.
In this case, the realization $|a|$ of $a\in \fD(n)$ is the complement of the interiors of the little discs; the incoming boundary maps $S^1\to |a|$ are the boundaries of the little discs; and the outgoing boundary map $S^1\to |a|$ is the boundary of the big disc.

The first definition states what data must be given in order to define a realization system.
It is analogous to defining an operad by specifying spaces, symmetry maps, and composition maps.

\begin{definition}\label{RealizationsDefinition}
A \emph{realization system for $\calO$ with boundaries $X$} is a rule $\R\calO$ that for each $n\geqslant 0$ and each $x\in\calO(n)$ produces a topological space $|x|$ called the \emph{realization of $x$}, and that furthermore equips these spaces with the following structure:
\begin{itemize}
\item {\bf Boundary Maps.}
For each $x\in\calO(n)$, \emph{incoming boundary maps}
\[\partial_1,\ldots,\partial_n\colon X\to |x|\]
and an \emph{outgoing boundary map}
$\partial_\rmout\colon X\to |x|$.
\item
{\bf Symmetries.}
For each $x\in\calO(n)$ and $\sigma\in\Sigma_n$, a \emph{symmetry map}
$\sigma^\ast\colon |x|\to |x\sigma|$.
\item
{\bf Pasting and composition.}
Given $x\in\calO(n)$ and $y_i\in\calO(m_i)$ for $i=1,\ldots,n$, a pushout diagram
\begin{equation}\label{PastingDiagram}
\xymatrix@=35 pt{
\bigsqcup_{i=1}^n X\ar[r]^-{\bigsqcup\partial_\rmout}\ar[d]_{\bigsqcup\partial_i} & |y_1|\sqcup\cdots\sqcup|y_n|\ar[d] \\
|x|\ar[r] & |\gamma(x;y_1,\ldots,y_n)|
}
\end{equation}
called the \emph{pasting square}.
\end{itemize}
The system $\R\calO$ is required to satisfy the axioms of Definition~\ref{RealizationsAxiomsDefinition}, and to be topologized in the sense of Definition~\ref{RealizationsTopologyDefinition}, both below.
\end{definition}

Our second definition lists the axioms that the structures above must satisfy.
It is analogous to the axioms satisfied by the various composition and symmetry maps in an operad.
Although we list no fewer than six axioms, they are almost trivial to verify in the example of framed little discs.

\begin{definition}\label{RealizationsAxiomsDefinition}
A realization system $\R\calO$ must satisfy the axioms listed below.
In what follows we will use underlines to indicate tuples of elements.
For example, $\underline y$ will denote $y_1,\ldots,y_n$.
\begin{enumerate}
\item\label{UnitAxiom}
{\bf Unit:}
Taking $x=\mathbf{1}_\calO$, the boundary maps $\partial_1,\partial_\rmout\colon X\to |\mathbf{1}_\calO|$ are homeomorphisms and coincide.
Applying these homeomorphisms to \eqref{PastingDiagram} in the case $n=1$ and $x=\mathbf{1}_\calO$, and to \eqref{PastingDiagram} in the case $y_i=\mathbf{1}_\calO$, one obtains the diagrams
\[
\xymatrix{
X\ar[r]^{\partial_\rmout}\ar@{=}[d] & |y|\ar@{=}[d] \\
X\ar[r]_{\partial_\rmout} & |y|
}
\qquad\mathrm{and}\qquad
\xymatrix{
\bigsqcup X\ar@{=}[r]\ar[d]_{\bigsqcup\partial_i} & \bigsqcup X\ar[d]^{\bigsqcup\partial_i} \\
|x|\ar@{=}[r] & |x|
}
\]
respectively.

\item\label{SymmetriesAxiom}
{\bf Symmetries:}
The symmetry maps satisfy $\sigma^\ast\circ\tau^\ast=(\tau\sigma)^\ast$ and are compatible with the boundary maps in the sense that
\[\sigma^\ast\circ\partial_i = \partial_{\sigma^{-1}i},\quad\sigma^\ast\circ\partial_\rmout=\partial_\rmout.\]

\item\label{PastingBoundariesAxiom}
{\bf Pasting and boundaries:}
The lower and right-hand maps of diagram \eqref{PastingDiagram} are compatible with the boundary maps, in the sense that the following triangles commute.
\[
\xymatrix@=35 pt{
|x|\ar[r] &  |\gamma(x;\underline y)|  \\
X\ar[u]^{\partial_\rmout}\ar[ur]_{\partial_\rmout}  & {}
}
\qquad
\xymatrix@=35 pt{
\bigsqcup |y_i|\ar[d]      &      \bigsqcup\bigsqcup X\ar[l]_-{\bigsqcup\bigsqcup\partial_j} \ar[dl]^{\bigsqcup\bigsqcup\partial_{i,j}}  \\
|\gamma(x;\underline y)|  & {}
}
\]
The unions run over $i=1,\ldots,n$ and $j=1,\ldots,m_i$; the symbol $\partial_{i,j}$ denotes $\partial_{m_1+\cdots+m_{i-1}+j}$.

\item\label{PastingSymmetriesAxiomI}
{\bf Pasting and symmetries I:}
Given $\sigma_1\in\Sigma_{m_1},\ldots,\sigma_n\in\Sigma_{m_n}$, we can formulate diagram \eqref{PastingDiagram} for $x$ and $y_1,\ldots,y_n$, and for $x$ and $y_1\sigma_1,\ldots,y_n\sigma_n$.
Then the resulting squares are isomorphic under the relevant symmetry map, which means that the following diagram commutes.
\[\xymatrix@=15 pt{
{} &  \bigsqcup X\ar[dl]\ar[dr]\ar[dd]|{\hole}  &   {}  \\
\bigsqcup |y_i|\ar[rr]_(0.485){\sigma_1^\ast\sqcup\cdots\sqcup \sigma_n^\ast}\ar[dd] &   {}   &   \bigsqcup |y_i\sigma_i| \ar[dd]  \\
{}   &   |x|\ar[dl]\ar[dr]    &  {}  \\
|\gamma(x;\underline{y})|\ar[rr]_{(\sigma_1\oplus\cdots\oplus\sigma_n)^\ast}   &    {}   &   |\gamma(x;\underline{y\sigma})|
}\]
Here $\underline{ y\sigma}=y_1\sigma_1,\ldots,y_n\sigma_n$.

\item\label{PastingSymmetriesAxiomII}
{\bf Pasting and symmetries II:}
Similarly, given $\sigma\in\Sigma_n$ we can formulate diagram \eqref{PastingDiagram} for $x,y_1,\ldots,y_n$, and for $x\sigma,y_{\sigma^{-1}_1},\ldots,y_{\sigma^{-1}_n}$.
Then the two squares are isomorphic under the relevant symmetry maps, which means that the following diagram commutes.
\[\xymatrix@=15 pt{
\bigsqcup X\ar[rr]\ar[dd]\ar[dr]_{\sigma_\sqcup} & {} & \bigsqcup |y_{\sigma^{-1}_i}|\ar[dr]^{\sigma_\sqcup}\ar'[d][dd] & {} \\
{} & \bigsqcup X\ar[rr]\ar[dd] & {} & \bigsqcup |y_i| \ar[dd] \\
|x|\ar[dr]_{\sigma^\ast}\ar'[r][rr] & {} & |\gamma(x;\underline {y_{\sigma^{-1}}})|\ar[dr]_(0.4){\sigma(m_1,\ldots,m_n)^\ast\ \ \ \ } & {}\\
{} & |x\sigma|\ar[rr] & {} & |\gamma(x\sigma;\underline{y})|
}\]
Here $\underline{y_{\sigma^{-1}}}=y_{\sigma^{-1}_1},\ldots,y_{\sigma^{-1}_n}$ and $\sigma_\sqcup$ sends the $\sigma_i$-th cofactor to the $i$-{th} cofactor.

\item\label{AssociativityAxiom}
{\bf Associativity of pasting:}
Suppose we are given the data for an iterated composition in $\calO$.
Thus we have elements $x\in\calO(n)$, $y_i\in\calO(m_i)$ and $z^j_i\in\calO(l^j_i)$, for $i=1,\ldots,n$ and $j=1,\ldots,m_i$.
Then we can form composites
\[\gamma(x; \underline y),\qquad \gamma(y_i;\underline{z_i}),\qquad \gamma(\gamma(x;\underline y);\underline z),\qquad \gamma(x;\gamma(y_1;\underline{z_1}),\ldots,\gamma(y_n;\underline {z_n})).\]
The last two composites coincide, and consequently so do their realizations,
\[|\gamma(\gamma(x;\underline y);\underline z)|= |\gamma(x;\gamma(y_1;\underline{z_1}),\ldots,\gamma(y_n;\underline {z_n}))|.\]
By considering the pasting diagram \eqref{PastingDiagram} for each composite, we obtain \emph{two} maps from each of $|x|$, $|y_1|,\ldots,|y_n|$, $|z_1^1|,\ldots,|z_n^{m_n}|$ into the single space above.
The two maps in each such pair coincide.
\end{enumerate}
\end{definition}

The next definition says what it means to topologize a realization system, and makes use of \emph{fibrewise topology}.
The standard reference for this is \cite{\James}, and we have recalled the necessary ideas in Appendix~\ref{FibrewiseAppendix}.
A \emph{space fibred over $B$} is just a space $X$ equipped with a map $X\to B$, and the appendix recalls what it means for such a fibred space to be \emph{fibrewise compact} or \emph{fibrewise Hausdorff}.

\begin{definition}[Topology on a realization system]\label{RealizationsTopologyDefinition}
A \emph{topology} on a realization system $\R\calO$ consists of fibred spaces 
\[\rho_n\colon\R\calO(n)\to\calO(n)\]
for $n\geqslant 0$,
such that for each $x\in\calO(n)$ the fibre $\rho_n^{-1}(x)$ is identified with $|x|$.
Each $\R\calO(n)$ must be fibrewise compact Hausdorff over $\calO(n)$.
The boundary and symmetry maps determine functions
\[\partial_1,\ldots,\partial_n,\partial_\rmout\colon\calO(n)\times X\to\R\calO(n),\qquad \sigma^\ast\colon\R\calO(n)\to\sigma^\ast\R\calO(n)\]
between spaces over $\calO(n)$, and the pasting squares \eqref{PastingDiagram} become squares
\begin{equation}\label{FibrewisePastingDiagram}
\xymatrix@=35 pt{
\bigsqcup_{i=1}^n X \ar[r]^-{\bigsqcup\partial_\rmout}\ar[d]_{\bigsqcup\partial_i} & \R\calO(m_1)\sqcup\cdots\sqcup\R\calO(m_n)\ar[d]\\
\R\calO(n)\ar[r] & \R\calO(m_1+\cdots+m_n)
}\end{equation}
of spaces over $\calO(n)\times\calO(m_1)\times\cdots\times\calO(m_n)$.
All of the functions above are required to be continuous, and the squares \eqref{FibrewisePastingDiagram} are required to be fibrewise pushouts.
(The spaces written in \eqref{FibrewisePastingDiagram} should all be regarded as spaces over $\calO(n)\times\calO(m_1)\times\cdots\times\calO(m_n)$ by pulling back along the appropriate constant, projection or composition map.)
\end{definition}

\subsection{Examples of realizations}\label{RealizationExamplesSubsection}

This subsection attempts to illustrate the definition of realization systems by giving two simple examples in the case when $\calO$ is the operad of little $n$-cubes.
The paper's two main examples, which are the realizations for $\fD$ and $\C$, are presented in Sections \ref{fDSection} and \ref{CSection} respectively.

\begin{example}[The little cubes operad]
Let $C_n$ denote the little $n$-cubes operad.
Define a realization system $\R_1 C_n$ with boundaries $S^n$ as follows.
Given $x\in C_n(p)$, set $|x|=\bigvee_{i=1}^p S^n$.
\begin{itemize}
\item
The incoming boundary maps $\partial_1,\ldots,\partial_p\colon S^n\to\bigvee S^n$ are the standard insertions.
The outgoing boundary map $\partial_\rmout\colon S^n\to\bigvee S^n$ is the collapse map $\mathrm{coll}_x$ induced by $x$.
\item
The symmetry map $\sigma^\ast\colon |x|\to|x\sigma|$ is the map $\bigvee S^n\to\bigvee S^n$ that sends the $i$-th cofactor to the $(\sigma^{-1}_i)$-th cofactor.
\item
Given $x\in C_n(p)$ and $y_i\in C_n(q_i)$, the pasting square \eqref{PastingDiagram} is given by
\[\xymatrix@=35 pt{
\bigsqcup_{i=1}^p S^n \ar[r]^-{\bigsqcup\mathrm{coll}_{y_i}} \ar[d] & \bigsqcup_{i=1}^p\bigvee_{j=1}^{q_i} S^n \ar[d]\\
\bigvee_{i=1}^p S^n \ar[r]_-{\bigvee\mathrm{coll}_{y_i}} & \bigvee_{i=1}^p\bigvee_{j=1}^{q_i} S^n
}\]
where the vertical maps are the standard quotients.
\end{itemize}
The axioms of Definition~\ref{RealizationsAxiomsDefinition} hold trivially.
The system is topologized by setting $\R_1 C_n(p)=C_n(p)\times\bigvee S^n$, and the conditions of Definition~\ref{RealizationsTopologyDefinition} are immediately verified.
\end{example}

\begin{example}[Little cubes operads again]
Let $C_n$ again denote the little $n$-cubes operad.
Define the realization system $\R_2 C_n$ with boundaries $S^{n-1}$, which we regard as the boundary of the $n$-cube $I^n$, as follows.
Given $x\in C_n(p)$, define $|x|$ to be the complement in the big cube $I^n$ of the interiors of the little cubes of $x$.
\begin{itemize}
\item
The incoming boundary maps $\partial_1,\ldots,\partial_p\colon S^{n-1}\to |x|$ are the inclusions of the boundaries of the little cubes of $x$.
The outgoing boundary map $\partial_\rmout\colon S^{n-1}\to |x|$ is the inclusion of the boundary of the big cube.
\item
The symmetry map $\sigma^\ast\colon |x|\to |x\sigma|$ identifies the two spaces as subsets of the big cube.
\item
Given $x\in C_n(p)$ and $y_i\in C_n(q_i)$, the pasting square \eqref{PastingDiagram}
\[
\xymatrix@=35 pt{
\bigsqcup_{i=1}^n S^{n-1}\ar[r]^-{\bigsqcup\partial_\rmout}\ar[d]_{\bigsqcup\partial_i} & |y_1|\sqcup\cdots\sqcup|y_n|\ar[d] \\
|x|\ar[r] & |\gamma(x;y_1,\ldots,y_n)|
}
\]
is given as follows.
The bottom map is the inclusion of  $|x|$ into $|\gamma(x;y_1,\ldots,y_n)|$ given by regarding both as subsets of the big cube $I^n$.
The right-hand map sends $|y_i|$ into $|\gamma(x;y_1,\ldots,y_n)|$ by applying $x_i\colon I^n\to I^n$, the $i$-th little cube of $x$.
This is a pushout diagram.
\end{itemize}
The axioms of Definition~\ref{RealizationsAxiomsDefinition} are simple to  verify.
The system is topologized by defining $\R_2 C_n(p)$ to be the subspace of $C_n(p)\times I^n$ whose fibre over $x\in C_n(p)$ is precisely $|x|\subset I^n$.
The conditions of Definition~\ref{RealizationsTopologyDefinition} follow.
(Compare with the system $\R\fD$ in Section~\ref{fDRealizationsSubsection}.)
\end{example}

\subsection{The mapping operad}\label{RealizationMappingSubsection}

Suppose now that we are given operads $\calP$ and $\calQ$ equipped with realization systems $\R\calP$ and $\R\calQ$, both with boundaries $X$.
This subsection introduces the \emph{mapping operad} $\calM$ associated to this data.

\begin{definition}\label{MappingOperadDefinition}
Fix $n\geqslant 0$.
Then $\calM(n)$ denotes the set of triples $(a,c,f)$, where $a\in\calP(n)$, $c\in\calQ(n)$ and $f\colon |a|\to|c|$ is a continuous map satisfying the \emph{boundary conditions} $f\circ\partial_i=\partial_i$ and $f\circ\partial_\rmout=\partial_\rmout$.
\end{definition}

\begin{definition}
The action of $\Sigma_n$ on $\calM(n)$ is defined by the formula $(a,c,f)\sigma=(a\sigma,c\sigma,\sigma^\ast\circ f\circ {\sigma^{\ast}}^{-1})$,
where $\sigma^\ast\colon |a|\to|a\sigma|$ and $\sigma^\ast\colon|c|\to|c\sigma|$ are the symmetry maps.
\end{definition}

\begin{definition}\label{MappingOperadCompositionDefinition}
Composition in $\calM$ is defined by pasting functions in the following way.
Given $(a,c,f)\in\calM(n)$ and $(b_i,d_i,g_i)\in\calM(m_i)$ for $i=1,\ldots,n$, the composite
\[\gamma\left((a,c,f);(b_1,d_1,g_1),\ldots,(b_n,d_n,g_n)\right)\]
is the new triple
\[\left(\gamma(a;b_1,\ldots,b_n),\gamma(c;d_1,\ldots,d_n),\gamma(f;g_1,\ldots,g_n)\right),\]
where $\gamma(f;g_1,\ldots,g_n)$ is the unique map making the cube
\[\xymatrix@=15 pt{
\bigsqcup X\ar[rr]\ar[dd]\ar@{=}[dr] & {} & \bigsqcup |b_i|\ar[dr]^{\ \ g_1\sqcup\cdots\sqcup g_n}\ar'[d][dd] & {} \\
{} & \bigsqcup X\ar[rr]\ar[dd] & {} & \bigsqcup |d_i|\ar[dd] \\
|a|\ar[dr]_{f}\ar'[r][rr] & {} & |\gamma(a;\underline b)|\ar@{-->}[dr]_(0.4){\gamma(f;g_1,\ldots,g_n)\ \ } & {}\\
{} & |c|\ar[rr] & {} & |\gamma(c;\underline d)|
}\]
commute.
The front and rear faces of this cube are the pushout squares \eqref{PastingDiagram} for  $c,d_1,\ldots,d_n$ and $a,b_1,\ldots,b_n$ respectively, while  the left hand and upper faces come from the compatibility between pasting and boundary maps.
Thus a unique such $\gamma(f;g_1,\ldots,g_n)$ exists.
\end{definition}

To topologize $\calM(n)$ we use another notion from fibrewise topology.
Given spaces $U$ and $V$ fibred over $B$, we write $\Map_B(U,V)$ for the set of pairs $(b,f)$, where $b\in B$ is a point of the base and $f\colon U_b\to V_b$ is a map between the fibres over that point.
This set can be equipped with the \emph{fibrewise compact-open topology}, and we call the resulting space the \emph{fibrewise mapping space}.
See Appendix~\ref{FibrewiseAppendix} and the reference there.

The realization systems $\R\calP$ and $\R\calQ$ give us fibred spaces $\R\calP(n)\to\calP(n)$ and $\R\calQ(n)\to\calQ(n)$.
Write $\pi_1$ and $\pi_2$ for the projections of $\calP(n)\times\calQ(n)$ onto its factors.
Then the fibrewise mapping space $\Map_{\calP(n)\times\calQ(n)}(\pi_1^\ast\R\calP(n),\pi_2^\ast\R\calQ(n))$ consists of triples $(a,c,f)$ where $a\in\calP(n)$, $c\in\calQ(n)$ and $f\colon |a|\to |c|$.
In particular, it contains $\calM(n)$ as a subset.

\begin{definition}
The space $\calM(n)$ is topologized as a subspace of the fibrewise mapping space $\Map_{\calP(n)\times\calQ(n)}(\pi_1^\ast\R\calP(n),\pi_2^\ast\R\calQ(n))$.
\end{definition}

\begin{theorem}\label{MappingOperadIsAnOperadTheorem}
The collection $\calM=\{\calM(n)\}$ is topological operad, and the projections
\[\calP \longleftarrow \calM \longrightarrow \calQ\]
are morphisms of operads.
\end{theorem}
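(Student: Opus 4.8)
The plan is to verify the operad axioms for $\calM$ directly, leaning on the fact that the underlying collections $\calP$ and $\calQ$ are already operads and that the realization systems $\R\calP$, $\R\calQ$ satisfy the axioms of Definition~\ref{RealizationsAxiomsDefinition}. There are two logically independent things to check: first, that the \emph{set-level} structure --- the $\Sigma_n$-actions and the composition $\gamma$ of Definition~\ref{MappingOperadCompositionDefinition} --- really does satisfy the equivariance, associativity and unit axioms of an operad; second, that all of this structure is \emph{continuous} for the fibrewise compact-open topology placed on $\calM(n)$. Throughout, a triple $(a,c,f)$ is determined by its first two coordinates together with $f$, so every identity of triples reduces to the corresponding identity for the $\calP$- and $\calQ$-coordinates (which holds because $\calP$, $\calQ$ are operads) together with a single identity of maps $|{-}|\to|{-}|$, which will follow from a universal-property (pushout) argument.

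First I would treat the algebraic structure. The $\Sigma_n$-action: $(a,c,f)\sigma\tau$ and $(a,c,f)(\sigma\tau)$ agree on $\calP$- and $\calQ$-coordinates, and on the map-coordinate one gets $\tau^\ast\sigma^\ast f (\tau^\ast\sigma^\ast)^{-1}$ versus $(\sigma\tau)^\ast f ((\sigma\tau)^\ast)^{-1}$; these coincide by the first clause of Axiom~\ref{SymmetriesAxiom}. That the action respects the boundary conditions (so $(a,c,f)\sigma$ really lies in $\calM(n)$) is the second clause of Axiom~\ref{SymmetriesAxiom}. Next, well-definedness of $\gamma$: Definition~\ref{MappingOperadCompositionDefinition} already exhibits $\gamma(f;g_1,\dots,g_n)$ as the unique map induced out of the pushout $|\gamma(a;\underline b)|$ by the compatible pair $(f, g_1\sqcup\cdots\sqcup g_n)$, using the pasting-and-boundaries Axiom~\ref{PastingBoundariesAxiom} to see the relevant square commutes; and this induced map automatically satisfies the boundary conditions for $\gamma(c;\underline d)$ because its two defining composites do, again by Axiom~\ref{PastingBoundariesAxiom}. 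Equivariance of $\gamma$ (both the $\Sigma_{m_i}$-equivariance in the inputs and the $\Sigma_n$-equivariance in the outer slot): on $\calP$- and $\calQ$-coordinates these are the operad equivariance axioms for $\calP$ and $\calQ$, and on the map-coordinate they are exactly the content of Axioms~\ref{PastingSymmetriesAxiomI} and~\ref{PastingSymmetriesAxiomII}, which say the two relevant pasting cubes are isomorphic via the symmetry maps --- hence the two induced maps out of the pushout agree by uniqueness. Associativity of $\gamma$: the two iterated composites agree on $\calP$- and $\calQ$-coordinates by associativity in $\calP$, $\calQ$; on the map-coordinate, both iterated composites are maps out of the common space $|\gamma(\gamma(a;\underline b);\underline z)| = |\gamma(a;\gamma(b_1;\underline{z_1}),\dots)|$ that restrict compatibly to $f$, the $g_i$'s and the $h^j_i$'s on the respective realizations, and by the associativity-of-pasting Axiom~\ref{AssociativityAxiom} these restrictions land via the \emph{same} maps into that common space, so uniqueness forces the two induced maps to coincide. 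The unit $\mathbf 1_\calM = (\mathbf 1_\calP,\mathbf 1_\calQ,\mathrm{id})$ works because Axiom~\ref{UnitAxiom} identifies $|\mathbf 1_\calP|$, $|\mathbf 1_\calQ|$ with $X$ compatibly with the boundary maps, and its two displayed diagrams say precisely that pasting with a unit on either side does nothing, so $\gamma(f;\mathrm{id},\dots,\mathrm{id}) = f = \gamma(\mathrm{id};g)$ after the evident identifications. Finally, the projections $\calM\to\calP$, $\calM\to\calQ$ are $(a,c,f)\mapsto a$, $(a,c,f)\mapsto c$, and they visibly carry the $\calM$-structure to the $\calP$- and $\calQ$-structures by construction.

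The second, and I expect more delicate, part is continuity. One must check that the $\Sigma_n$-action and the composition $\gamma$ are continuous maps when $\calM(n)$ carries the subspace topology from the fibrewise mapping space $\Map_{\calP(n)\times\calQ(n)}\bigl(\pi_1^\ast\R\calP(n),\pi_2^\ast\R\calQ(n)\bigr)$. The key inputs are: (i) the realization systems are \emph{topologized} in the sense of Definition~\ref{RealizationsTopologyDefinition}, so the boundary maps, symmetry maps and pasting squares are continuous/fibrewise-pushout at the level of the total spaces $\R\calP(n)$, $\R\calQ(n)$; and (ii) general facts about the fibrewise compact-open topology from the appendix --- in particular that $\Map_B(-,-)$ is functorial in fibrewise maps of the source and target, that composition of fibrewise maps is continuous, and that inducing a map out of a fibrewise pushout is continuous in families (this last uses fibrewise compactness/Hausdorffness, which is exactly why Definition~\ref{RealizationsTopologyDefinition} demands it). For the symmetry action, continuity is just post- and pre-composition with the continuous fibrewise symmetry maps $\sigma^\ast$ and $(\sigma^\ast)^{-1}$, together with the homeomorphism $\calP(n)\times\calQ(n)\to\calP(n)\times\calQ(n)$ reindexing by $\sigma$. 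For $\gamma$, the map $(f;g_1,\dots,g_n)\mapsto\gamma(f;g_1,\dots,g_n)$ is continuous because it is obtained by: forming the fibrewise coproduct $g_1\sqcup\cdots\sqcup g_n$ (continuous), pairing it with $f$ (continuous), and then applying the fibrewise universal property of the pasting pushout for the $\calP$-coordinates, which produces a continuous fibrewise map into the fibrewise pushout for the $\calQ$-coordinates; the continuity of this last ``induce out of a pushout'' step over the parameter space $\calP(n)\times\prod\calP(m_i)\times\calQ(n)\times\prod\calQ(m_i)$ is the real technical heart and is where the appendix results on fibrewise pushouts of fibrewise compact Hausdorff spaces get used. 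I expect this continuity-of-the-induced-map step to be the main obstacle; everything else is bookkeeping with universal properties. Once it is in hand, continuity of the projections $\calM\to\calP$ and $\calM\to\calQ$ is immediate, since they are the (continuous) structure maps of the ambient fibrewise mapping space followed by projection, completing the proof that $\calM$ is a topological operad with the stated morphisms.
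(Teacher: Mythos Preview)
Your proposal is correct and follows essentially the same strategy as the paper: verify the set-level operad axioms by reducing to pushout uniqueness via the axioms of Definition~\ref{RealizationsAxiomsDefinition}, then establish continuity using the fibrewise mapping-space machinery. The paper only spells out associativity in detail (your treatment of the other axioms is more thorough), and for the continuity of $\gamma$ it makes your ``induce out of a fibrewise pushout'' step concrete by observing that the pasting map $\xi_\calP\colon\R\calP^\sqcup\to\R\calP^\circ$ is a proper fibrewise surjection, so that precomposition $\xi_\calP^\ast$ is an \emph{embedding} of fibrewise mapping spaces (Proposition~\ref{FibrewiseMappingProposition}, part~2); the induced map is then obtained by lifting through this embedding, which is automatically continuous.
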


\begin{proposition}\label{SubOperadProposition}
Suppose that for both $\R\calP$ and $\R\calQ$ the pasting squares \eqref{PastingDiagram} are all \emph{homotopy} pushouts, rather than just pushouts.
Then the spaces
\[
\calM_\simeq (n)=\{(a,c,f)\in\calM(n)\mid f\ \mathrm{is\ a\ homotopy\ equivalence}\}
\]
form a suboperad $\calM_\simeq$ of $\calM$.
\end{proposition}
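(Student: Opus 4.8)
The plan is to exhibit $\calM_\simeq$ as a suboperad of $\calM$ by verifying three closure properties: that each $\calM_\simeq(n)$ is stable under the $\Sigma_n$-action, that it contains the operadic unit, and that it is closed under the composition maps $\gamma$. Since $\calM$ is already a topological operad by Theorem~\ref{MappingOperadIsAnOperadTheorem} and $\calM_\simeq(n)$ carries the subspace topology from $\calM(n)$, continuity of the restricted structure maps is automatic, so these three points are all that is needed.

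The first two are routine. The symmetry maps $\sigma^\ast$ are homeomorphisms (their inverses are the symmetry maps $(\sigma^{-1})^\ast$, by the Symmetries axiom), so the assignment $(a,c,f)\mapsto(a\sigma,c\sigma,\sigma^\ast\circ f\circ(\sigma^\ast)^{-1})$ carries a homotopy equivalence $f$ to the homotopy equivalence $\sigma^\ast\circ f\circ(\sigma^\ast)^{-1}$; hence $\calM_\simeq(n)$ is $\Sigma_n$-stable. For the unit, the boundary maps $\partial_1\colon X\to|\mathbf{1}_{\calP}|$ and $\partial_1\colon X\to|\mathbf{1}_{\calQ}|$ are homeomorphisms by the Unit axiom, and the $f$-component of the unit of $\calM$ is forced by the boundary condition $f\circ\partial_1=\partial_1$ to equal the resulting composite homeomorphism $|\mathbf{1}_{\calP}|\to|\mathbf{1}_{\calQ}|$, which is in particular a homotopy equivalence.

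The real content is closure under composition, and this is where the homotopy-pushout hypothesis is used. Given $(a,c,f)\in\calM_\simeq(n)$ and $(b_i,d_i,g_i)\in\calM_\simeq(m_i)$, I must show that $\gamma(f;g_1,\ldots,g_n)$ is a homotopy equivalence. In the defining cube of Definition~\ref{MappingOperadCompositionDefinition}, the rear and front faces are the pasting squares \eqref{PastingDiagram} exhibiting $|\gamma(a;\underline b)|$ and $|\gamma(c;\underline d)|$ as the pushouts of the spans $|a|\leftarrow\bigsqcup X\to\bigsqcup|b_i|$ and $|c|\leftarrow\bigsqcup X\to\bigsqcup|d_i|$; the left and top faces commute precisely because $f$ and the $g_i$ satisfy the boundary conditions, so $(f,\,\mathrm{id},\,\bigsqcup g_i)$ is a map of these spans, and $\gamma(f;g_1,\ldots,g_n)$ is by construction the induced map of pushouts. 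All three components of this map of spans are homotopy equivalences: $f$ and each $g_i$ by hypothesis, the identity of $\bigsqcup X$ trivially, and $\bigsqcup g_i=g_1\sqcup\cdots\sqcup g_n$ since a finite disjoint union of homotopy equivalences is one. Because both pasting squares are homotopy pushouts by assumption, the canonical map from the double mapping cylinder of each span to its actual pushout is a homotopy equivalence. Forming the double mapping cylinder carries a levelwise homotopy equivalence of spans to a homotopy equivalence (the gluing lemma for homotopy equivalences), so the induced map of double mapping cylinders is a homotopy equivalence; a two-out-of-three argument in the naturality square relating the two double mapping cylinders to the two pushouts then shows that $\gamma(f;g_1,\ldots,g_n)$ is a homotopy equivalence. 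Thus $\calM_\simeq$ is closed under $\gamma$, which together with the previous paragraph proves the proposition.

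The one genuine obstacle is the final step: one needs the gluing lemma in exactly the form ``a map of homotopy pushout squares that is an equivalence on the three source corners is an equivalence on the pushouts,'' and one needs to recognize the cube of Definition~\ref{MappingOperadCompositionDefinition} as such a map of squares — which it is, essentially by the universal property used to define $\gamma(f;g_1,\ldots,g_n)$. Everything else is formal. (If one prefers to argue with weak equivalences, all realizations occurring here have the homotopy type of CW complexes, so Whitehead's theorem makes the two notions interchangeable.)
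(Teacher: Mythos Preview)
Your proof is correct and follows the same approach as the paper: verify the unit and symmetry closure from the corresponding axioms, then deduce closure under composition from the fact that homotopy equivalences are preserved by homotopy pushouts. The paper's proof simply invokes this last fact in one line, whereas you spell it out via the double mapping cylinder and the gluing lemma; the content is the same.
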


\begin{proof}[Proof of Proposition~\ref{SubOperadProposition}]
That $\calM_\simeq(1)$ contains the unit element, and that $\calM_\simeq$ is closed under the $\Sigma_n$ action, follow from the unit and symmetry axioms respectively.
Since homotopy equivalences are preserved by homotopy pushouts, it follows that if $f$ and $g_1,\ldots,g_n$ are all homotopy equivalences, then so is $\gamma(f;g_1,\ldots,g_n)$.
Thus $\calM_\simeq$ is closed under composition.
\end{proof}

\begin{proof}[Proof of Theorem~\ref{MappingOperadIsAnOperadTheorem}]
We begin by showing that $\calM$ is an operad in sets.
This is a direct consequence of the axioms for a realization system presented in Definition~\ref{RealizationsAxiomsDefinition}.
We will only show that composition is associative, the proof of the rest being similar but simpler.
Take elements $(x,u,f)\in\calM(n)$, $(y_i,v_i,g_i)\in\calM(m_i)$ and $(z_i^j,w_i^j,h_i^j)\in\calM(l_i^j)$ for $i=1,\ldots,n$ and $j=1,\ldots,m_i$.
Write
\[A=|\gamma(\gamma(x,\underline y);\underline z)|=|\gamma(x;\gamma(y_1;\underline{z_1}),\ldots,\gamma(y_n;\underline{z_n}))|\]
and
\[B=|\gamma(\gamma(u,\underline v);\underline w)|=|\gamma(u;\gamma(v_1;\underline{w_1}),\ldots,\gamma(v_n;\underline{w_n})|.\]
We must show that the two pasted maps
\begin{equation*}\label{PastedMapsEquation}
\gamma(\gamma(f,\underline g);\underline h)\quad \mathrm{and}\quad \gamma(f;\gamma(g_1;\underline{h_1}),\ldots,\gamma(g_n;\underline{h_n}))
\end{equation*}
coincide.
But both maps are characterized as the unique map making each square
\[\xymatrix{
|x|\ar[r]^f\ar[d]  & |u|\ar[d]\\
A\ar@{-->}[r] & B,
}
\qquad
\xymatrix{
|y_i|\ar[r]^{g_i}\ar[d]  & |v_i|\ar[d]\\
A\ar@{-->}[r] & B,
}
\qquad
\xymatrix{
|z_i^j|\ar[r]^{h^j_i}\ar[d]  & |w_i^j|\ar[d]\\
A\ar@{-->}[r] & B
}\]
commute.
Here the vertical maps come from Axiom~\ref{AssociativityAxiom} of Definition~\ref{RealizationsAxiomsDefinition}.
In particular, the two pasted maps coincide.

Now we will prove that the composition map $\calM(n)\times\calM(m_1)\times\cdots\times\calM(m_n)\to\calM(m_1+\cdots+m_n)$ is continuous.
The proof that the permutation map $\sigma\colon\calM(n)\to\calM(n)$ is continuous is similar.

We will use the following shorthand for spaces obtained from $\R\calP$, and the equivalent shorthand for spaces obtained from $\R\calQ$.
\begin{enumerate}
\item $\calP^\Pi$ denotes the product $\calP(n)\times\calP(m_1)\times\cdots\times\calP(m_n)$.
\item $\R\calP^\sqcup$ denotes the space obtained by pulling back each of $\R\calP(n)$ and $\R\calP(m_i)$ to $\calP^\Pi$ and then forming the disjoint union.
\item $\R\calP^\circ$ denotes the pullback of $\R\calP(m_1+\cdots+m_n)$ to $\calP^\Pi$.
\item $\xi_\calP\colon\R\calP^\sqcup\to\R\calP^\circ$ 
denotes the map obtained from the pasting squares~\eqref{FibrewisePastingDiagram}.
\end{enumerate}
Note that $\xi_\calP$ and $\xi_Q$ are proper fibrewise surjections, since $X$ is compact and the square \eqref{FibrewisePastingDiagram} is a fibrewise pushout.
(The analogous claim need not hold for $\xi_Q$.)

An element $x$ of $\calM(n)\times\calM(m_1)\times\cdots\times\calM(m_n)$ can be regarded as a point $x_1$ of the product
\[\Map_{\calP(n)\times\calQ(n)}(\R\calP(n),\R\calQ(n))\times\prod\Map_{\calP(m_i)\times\calQ(m_i)}(\R\calP(m_i),\R\calQ(m_i)),\]
which maps continuously into $\Map_{\calP^\Pi\times\calQ^\Pi}(\R\calP^\sqcup,\R\calQ^\sqcup)$, sending $x_1$ to a point $x_2$.
Now $\xi_\calP$ and $\xi_\calQ$ give continuous embeddings
\[\Map_{\calP^\Pi\times\calQ^\Pi}(\R\calP^\sqcup,\R\calQ^\sqcup)\xrightarrow{{\xi_\calQ}\ast}\Map_{\calP^\Pi\times\calQ^\Pi}(\R\calP^\sqcup,\R\calQ^\circ)\xleftarrow{\xi_\calP^\ast}\Map_{\calP^\Pi\times\calQ^\Pi}(\R\calP^\circ,\R\calQ^\circ).\]
The image of $x_2$ in the central space lifts to an element $x_3$ of the right hand space.
Finally we obtain a continuous map
\[\Map_{\calP^\Pi\times\calQ^\Pi}(\R\calP^\circ,\R\calQ^\circ)\to \Map_{\calP(\sum m_i)\times\calQ(\sum m_i)}(\R\calP({\textstyle\sum} m_i),\R\calQ(\textstyle\sum m_i))\]
sending $x_3$ to a point $y$.
This $y$ lies in $\calM(m_1+\cdots+ m_n)$.

The assignment $x\mapsto y$ just described is nothing more than the composition map, and the description makes its continuity evident.
This completes the proof.
\end{proof}

\section{Framed little discs and their realizations}\label{fDSection}

This section recalls Getzler's framed little discs operad $\fD$ and introduces its realization system $\R\fD$.

\subsection{Framed little discs}\label{FramedLittleDiscsRecap}

\begin{definition}[The framed little discs operad \cite{\Getzler}]
Getzler's \emph{framed little discs} operad $\fD$ has for its $n$-th space the collection $\fD(n)$ of embeddings
$a\colon\bigsqcup D^2\to D^2$
of $n$ \emph{little} discs into a single \emph{big} disc.
It is required that the $i$-th component $a_i\colon D^2\to D^2$ of the embedding be given by a combination of translation, rotation and rescaling.

The unit $\mathbf{1}_\fD$ is the identity embedding $D^2\to D^2$.
The action of $\Sigma_n$ on $\fD(n)$ is given by permuting the $n$ little discs: the $i$-th disc of $a\sigma$ is the $\sigma(i)$-th disc of $a$.
Composition in $\fD$ is given by composition of embeddings.
\end{definition}

\begin{example}\label{fDElementExample}
A typical element $a\in\fD(3)$ is drawn below.
\begin{center}
\begin{lpic}[]{figures/DiscElement(,0.7in)}
\lbl{51.5,53.5;$1$}
\lbl{18.5,38.5;$2$}
\lbl{44.5,22;$3$}
\end{lpic}
\end{center}
The dash on the $i$-th little disc indicates the image of the basepoint of $S^1$, which we call the \emph{local marked point}.
The bullet $\bullet$ indicates the basepoint on the boundary of the big disc, which we call the \emph{global marked point}.
\end{example}

\subsection{Realizations of framed little discs}\label{fDRealizationsSubsection}

\begin{definition}[Realizations of framed little discs]
The realization system $\R\fD$ is defined as follows.
Given $a\in\fD(n)$, the \emph{realization} of $a$ is the space $|a|\subset D^2$
obtained by deleting the interiors of the little discs from the big disc.
\begin{itemize}
\item
The \emph{incoming boundary maps}
$\partial_1,\ldots,\partial_n\colon S^1\to |a|$
are given by $\partial_i=a_i|S^1$.
The \emph{outgoing boundary map}
$\partial_\rmout\colon S^1\to |a|$
is given by the inclusion $S^1\hookrightarrow D^2$.
In other words the $i$-th incoming boundary map is given by the boundary of the $i$-th little disc, while the outgoing boundary map is given by the boundary of the big disc.
\item
Given $a\in\fD(n)$ and $\sigma\in\Sigma_n$, the symmetry map
$\sigma^\ast\colon |a|\to |a\sigma|$
identifies $|a|$ and $|a\sigma|$ as subsets of $D^2$.
\item
Given $a\in\fD(n)$ and $b_i\in\fD(m_i)$ for $i=1,\ldots,n$, the pasting square
\begin{equation}\label{fDPastingDiagram}
\xymatrix{
\bigsqcup_{i=1}^n S^1\ar[r]^-{\bigsqcup\partial_\rmout} \ar[d]_{\bigsqcup\partial_i}   & |b_1|\sqcup\cdots\sqcup|b_n|\ar[d]\\
|a|\ar[r] & |\gamma(a;b_1,\ldots,b_n)|
}
\end{equation}
is given as follows.
The lower map is the inclusion of $|a|$ into $|\gamma(a;b_1,\ldots,b_n)|$ given by regarding both as subsets of $D^2$.
The right hand map sends $|b_i|$ into $|\gamma(a;b_1,\ldots,b_n)|$ by applying $a_i\colon D^2\to D^2$.
In other words it sends $|b_i|$ into the $i$-th little disc of $a$.
This is a pushout square, and in fact a homotopy pushout.
\end{itemize}
\end{definition}

\begin{example}
The realization of the element $a\in\fD(3)$ from Example~\ref{fDElementExample}, together with its boundary maps, is shown below.
\begin{center}
\begin{lpic}{figures/DiscRealizationFigure(,0.7 in)}
\lbl{85,47;$\bigsqcup\partial_i$}
\lbl{225,47;$\partial_\rmout$}
\end{lpic}
\end{center}
Now take the following elements $a,b_1,b_2$.
\begin{center}
\begin{lpic}[b(20 pt)]{figures/PastingDiscs(,0.7 in)}
\lbl{32,-10;$a$}
\lbl{136,-10;$b_1$}
\lbl{240,-10;$b_2$}
\lbl{20.5,42;$1$}
\lbl{44.5,19;$2$}
\lbl{120,32;$1$}
\lbl{150,32;$2$}
\lbl{240,46.5;$1$}
\lbl{240,19;$2$}
\end{lpic}
\end{center}
Then the pasting diagram~\eqref{fDPastingDiagram} for $\gamma(a;b_1,b_2)$ is depicted below:
\begin{center}
\begin{lpic}{figures/PastingDiagram(,1.5 in)}
\end{lpic}
\end{center}
\end{example}

\begin{definition}[Topology on the realizations of framed little discs]
For $n\geqslant 0$ we define $\R\fD(n)$ to be the subspace of $\fD(n)\times D^2$ whose fibre over $a\in\fD(n)$ is precisely $|a|\subset D^2$.
It is equipped with the projection map $\rho_n\colon\R\fD(n)\to\fD(n)$.
As a closed subspace of $\fD(n)\times D^2$, it is a fibrewise compact Hausdorff space over $\fD(n)$.
\end{definition}

\begin{proposition}
The axioms of Definition~\ref{RealizationsAxiomsDefinition} and the conditions of Definition~\ref{RealizationsTopologyDefinition} are satisfied by the realizations of framed little discs.
\end{proposition}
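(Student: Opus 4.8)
The proposition asks us to verify, for the specific realization system $\R\fD$ just defined, the six axioms of Definition~\ref{RealizationsAxiomsDefinition} together with the continuity/fibrewise-pushout conditions of Definition~\ref{RealizationsTopologyDefinition}. As the author has already emphasized (``almost trivial to verify in the example of framed little discs''), the strategy is simply to run through each axiom and observe that it reduces to an elementary statement about the geometry of discs embedded in $D^2$ by translations, rotations and rescalings. There is no single hard idea; the work is bookkeeping, and the main point is to organize it so that the key geometric observation is isolated and reused.

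That key observation is the following. For $a\in\fD(n)$ and $b_i\in\fD(m_i)$, all three spaces $|a|$, $|b_i|$ and $|\gamma(a;\underline b)|$ are literal subsets of $D^2$, and the maps in the pasting square \eqref{fDPastingDiagram} are, respectively, the inclusion $|a|\hookrightarrow|\gamma(a;\underline b)|$ and the restrictions $a_i\colon|b_i|\to|\gamma(a;\underline b)|$ of the $i$-th little-disc embedding. One checks that $|\gamma(a;\underline b)| = |a|\cup\bigcup_i a_i(|b_i|)$ as subsets of $D^2$, that $|a|$ and $a_i(|b_i|)$ meet exactly along the circle $a_i(S^1)=\partial_i$ (the boundary of the $i$-th little disc of $a$), and that the $a_i(|b_i|)$ are pairwise disjoint except where they meet $|a|$. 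Since this is a union of closed subsets of $D^2$ glued along closed circles, it is both a pushout and a homotopy pushout (the inclusions $S^1\hookrightarrow|a|$ and $S^1\hookrightarrow|b_i|$ are cofibrations, being inclusions of boundary circles into manifolds-with-boundary). With this in hand, Axioms~\ref{UnitAxiom}--\ref{AssociativityAxiom} each become a one-line check: the unit axiom because $|\mathbf 1_\fD|=S^1$ with both $\partial_1,\partial_\rmout$ the identity; the symmetry axiom because $\sigma^\ast$ is the identity on the underlying subset of $D^2$ and permuting the little discs permutes the incoming boundary circles accordingly; Axioms~\ref{PastingBoundariesAxiom}, \ref{PastingSymmetriesAxiomI}, \ref{PastingSymmetriesAxiomII} because every map in sight is either an inclusion of subsets of $D^2$ or a restriction of one of the affine embeddings $a_i$, so all the diagrams commute ``on the nose'' in $D^2$; and the associativity axiom \ref{AssociativityAxiom} because both iterated composites realize the same subset $|a|\cup\bigcup a_i(|b_i|)\cup\bigcup a_i b_i^j(|z_i^j|)$ of $D^2$, with the two maps from each piece being literally equal restrictions of affine maps.

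For the topology conditions of Definition~\ref{RealizationsTopologyDefinition} I would argue as follows. The total space $\R\fD(n)\subset\fD(n)\times D^2$ is closed (its complement is the union over $i$ of the open sets where the $D^2$-coordinate lies in the interior of the $i$-th little disc, a condition that is open in $\fD(n)\times D^2$ because the little-disc data varies continuously), hence $\R\fD(n)$ is fibrewise compact Hausdorff over $\fD(n)$ as a closed subspace of the trivial fibrewise-compact-Hausdorff space $\fD(n)\times D^2$. The boundary functions $\fD(n)\times S^1\to\R\fD(n)$ are continuous because $\partial_i(a,-)=a_i|S^1$ and $a\mapsto a_i$ is continuous, while $\partial_\rmout(a,-)$ is the constant-in-$a$ inclusion $S^1\hookrightarrow D^2$; the symmetry maps are continuous because they are induced by the continuous permutation action on $\fD(n)$ together with the identity on the $D^2$-coordinate. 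Finally, that \eqref{FibrewisePastingDiagram} is a fibrewise pushout follows from the fibrewise version of the gluing-of-closed-subsets statement above, applied uniformly: over each point of the base the square is the pushout square \eqref{fDPastingDiagram}, and these fit together continuously because the gluing circles $a_i(S^1)$ and the affine maps $a_i$ vary continuously in $a$ — here one invokes the fibrewise-pushout criteria recalled in Appendix~\ref{FibrewiseAppendix}.

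**Main obstacle.** There is no deep obstacle; the one place demanding genuine care is the fibrewise-pushout claim in Definition~\ref{RealizationsTopologyDefinition}. Fibrewise pushouts are subtler than fibrewise-pointwise pushouts, so one must check that the gluing is suitably ``locally trivial'' — concretely, that near any $a\in\fD(n)$ the family of decompositions $D^2\supset|a'|\cup\bigcup a'_i(|b'_i|)$ is, after a fibrewise homeomorphism, independent of $a'$. This is true because the little-disc embeddings depend continuously (indeed smoothly) on their defining parameters and one can straighten them by an ambient fibrewise isotopy, but it is the step where one actually uses the appendix rather than pure point-set triviality, and so it is the part of the proof I would write out most carefully.
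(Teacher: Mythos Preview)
Your proposal is correct and fleshes out exactly the ``lengthy but trivial verification'' that the paper's own proof declines to write down; the paper's proof is simply the two sentences ``The axioms of Definition~\ref{RealizationsAxiomsDefinition} follow from a lengthy but trivial verification.  The continuity required by Definition~\ref{RealizationsTopologyDefinition} follows immediately.''

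One remark on your ``main obstacle'': you are making the fibrewise-pushout step harder than it needs to be.  You do not need any local-triviality or ambient-isotopy argument.  The appendix supplies Corollary~\ref{FibrewisePushoutCorollary}, which says that a commutative square of fibrewise compact Hausdorff spaces over $B$ that is a pushout in each fibre is automatically a fibrewise pushout.  You have already checked that all four corners of \eqref{FibrewisePastingDiagram} are fibrewise compact Hausdorff (closed subspaces of products of the base with $D^2$), that the maps are continuous, and that fibre-by-fibre the square is the pushout \eqref{fDPastingDiagram}.  So the fibrewise-pushout claim follows at once from that corollary, with no further work; this is presumably why the paper says the topological conditions ``follow immediately.''
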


\begin{proof}
The axioms of Definition~\ref{RealizationsAxiomsDefinition} follow from a lengthy but trivial verification.
The continuity required by Definition~\ref{RealizationsTopologyDefinition} follows immediately.
\end{proof}

\section{Cacti and their realizations}\label{CSection}

This section introduces the cacti operad and its realization system.
Cacti were introduced by Voronov in \cite{\Voronov}.
Several variants have been introduced since then, in particular by McClure-Smith~\cite{\McClureSmith}, Kaufmann~\cite{\Kaufmann}, and Salvatore~\cite{\Salvatore}.
We begin in \S\ref{VoronovCacti} with an informal introduction to cacti along the lines of Voronov's original definition.
Then in \S\ref{SalvatoreCacti} we recall in detail Salvatore's variant of cacti, which will be used in the rest of the paper.
Finally in \S\ref{CRealizations} we introduce the realization system for cacti.

\subsection{An introduction to cacti}\label{VoronovCacti}

This subsection gives an informal description of the cactus operad, along the lines of Voronov's original definition in \cite{\Voronov}.
We will not recall Voronov cacti in detail, but hope to give the unfamiliar reader an idea of the theory.

\begin{definition}
A \emph{cactus with $n$ lobes} is a planar treelike configuration of parameterized circles, of varying radii, labelled by $1,\ldots,n$, equipped with a \emph{global marked point} on one of the circles.
The individual circles are called the \emph{lobes} of the cactus, and the basepoints on the lobes are called the \emph{local marked points}.
See \cite{\Voronov} and \cite{\CohenVoronov}.

Thus a cactus consists of $n$ labelled circles of positive radius identified at finitely many \emph{intersection points}.
\emph{Planar} means that the circles meeting at each intersection point are equipped with a cyclic order.
\emph{Treelike} means that the space obtained by filling in each circle with a disc is contractible.

The  \emph{cactus operad} has for its $n$-th term the collection of all cacti with $n$ lobes.
\end{definition}

\begin{example}\label{CactusElementExample}
Here is a typical cactus with $4$ lobes.
\begin{center}
\begin{lpic}[]{figures/CactusElement(,0.7 in)}
\lbl{35,21.5;$1$}
\lbl{13,42;$2$}
\lbl{65,42;$3$}
\lbl{75,10;$4$}
\end{lpic}
\end{center}
The bullet $\bullet$ indicates the global marked point, while the dashes indicate the local marked points.
\end{example}

Given cacti $c$ and $d$, the composed cactus $c\circ_i d$ is defined by the following pasting process.
Rescale $d$ so that its total length becomes the length of the $i$-th lobe of $c$.
Then $c\circ_i d$ is obtained from $c$ and $d$ by collapsing the $i$-th lobe of $c$ onto  $d$ using the \emph{pinch map} $S^1\to d$ that sends the basepoint to the global marked point and then proceeds around the `outside' of the cactus.

\subsection{Salvatore's cacti operad}\label{SalvatoreCacti}

In this subsection we recall the variant of the cacti operad developed by Salvatore in \cite{\Salvatore}.
The operad was denoted there by $f\mathrm{MS}$, but here we will call it $\C$.
It is a suboperad of the co-endomorphism operad $\CoEnd(S^1)$ of $S^1$.
We begin by establishing some notation.

\begin{definition}
\begin{enumerate}
\item
For $n\geqslant 1$ we define a space $\calF(n)$ as follows.
A point $x\in\calF(n)$ is a partition of $S^1$ into $n$ closed $1$-manifolds $I_j(x)$ with equal length and piecewise disjoint interiors, such that there is no cyclically ordered sequence $z_1,z_2,z_3,z_4$ with $z_1,z_3\in\interior I_j(X)$ and $z_2,z_4\in\interior I_k(x)$ with $j\neq k$.
Set $\calF(0)=\ast$.

\item
Given $x\in\calF(n)$ we define a map  $c(x)\colon S^1\to(S^1)^n$ as follows.
Given $j\in\{1,\ldots,n\}$, collapse each component of $\overline {S^1\setminus I_j(x)}$ to a point and rescale the quotient in order to identify it with $S^1$.
This gives a based map $\pi_j\colon S^1\to S^1$.
Then $c(x)$ is the product $\pi_1\times\cdots\times\pi_n$.

\item
Write $\Mon(I,\partial I)$ for the space of nondecreasing maps $I\to I$ that preserve $\partial I$.
Regard elements $f\in\Mon(I,\partial I)$ as based maps $f\colon S^1\to S^1$.

\item
Given $z\in S^1$ write $L_z\colon S^1\to S^1$ for translation by $z$.
\end{enumerate}
The constructions above induce an embedding
\[c\colon\calF(n)\times\Mon(I,\partial I)\times(S^1)^n\to\mathrm{CoEnd}(S^1)(n)\]
that sends $(x,f,\underline z)$ to the composite $(L_{z_1}\times\cdots\times L_{z_n})\circ c(x)\times f$.

\end{definition}

\begin{definition}[Salvatore's cacti operad]
$\C$ is defined to be the suboperad of $\CoEnd(S^1)$ whose $n$-th term is the image of the embedding $c\colon\calF(n)\times\Mon(I,\partial I)\times(S^1)^n\to\mathrm{CoEnd}(S^1)(n)$.
\end{definition}

\begin{note}
Here is the connection between the operad $\C$ and the informal notion of cacti given in the last subsection.
Fix $c\in\C(n)$ and write $c=c(x,f,\underline z)$.

Consider the image $c(S^1)\subset (S^1)^n$.
It depends only on $x$ and $\underline z$, and can be regarded as a Voronov cactus in which the lobes have equal length.
For $c(S^1)$ is a union of $n$ parameterized circles.
The $j$-th of these circles is simply $c(I_j(x))$.
This union of circles inherits a planar structure from the cyclic ordering of the components of the $I_j(x)$, and the condition on these cyclic orderings ensures that this union of circles is treelike.
The global marked point is given by $c(\ast)$.

Thus $c=c(x,f,\underline z)$ determines a cactus in which the lobes have equal length, depending only on $x$ and $\underline z$.
We regard $f$ as a reparametrization of the pinch map of this cactus; in the case when $f$ has constant speed $l_j$ on $I_j(x)$, we can regard $c$ as a cactus in which the $j$-th lobe has length $l_j$.
\end{note}

\begin{note}
Salvatore describes the precise relationship between $\C$ and Voronov's original definition of cacti.
In particular, Theorem~5.3.6 of \cite{\Kaufmann} holds with $\C$ in place of Voronov cacti.
\end{note}

\begin{note}\label{CellDecompositionNote}
Finally we recall the natural cell decomposition $\{ D_{\underline X} \}$ of $\calF(n)$.

Fix $x\in\calF(n)$.
Pull back the partition $\{I_j(x)\}$ of $S^1$ to a partition of $[0,1]$.
This partition is specified by boundary points $0=x_0<x_1<\cdots<x_k=1$ and a labelling $\underline X(x)=(X_1(x),\ldots X_k(x))$ of the intervals $[x_0,x_1],\ldots,[x_{k-1},x_k]$ by elements from $\{1,\ldots,n\}$.

The sequence $\underline X=\underline X(x)$ of the last paragraph satisfies the following three properties.
First, all values $1,\ldots,n$ appear.
Second, adjacent terms are distinct.
Third, there is no subsequence of the form $i,j,i,j$ with $i\neq j$.
Every such sequence $\underline X$ arises as $\underline X(x)$ for some element $x\in \calF(n)$.

Let $\underline X$ be a sequence satisfying the three conditions above.
Write $D_{\underline X}\subset \calF(n)$ for the subset consisting of those $x$ for which $\underline X(x)$ is contained in $\underline X$.
Then $D_{\underline X}$ is homeomorphic to the product $\prod_{j=1}^n\Delta^{d(j)}$, where $d(j)+1$ is the number of elements of $\underline X$ labelled by $j$.
The homeomorphism sends $x\in D_{\underline X}$ to the point whose image in $\Delta^{d(j)}$ records the lengths of the intervals labelled by $j$.
\end{note}

\subsection{The realizations of cacti}\label{CRealizations}

\begin{definition}[Realizations of cacti]
The realization system $\R\C$ is defined as follows.
Let $c\in\C(n)$ be a cactus with $n$ lobes.
The \emph{realization of $c$} is the topological space $c(S^1)$, which we denote by $|c|$.
\begin{itemize}

\item 
The \emph{$i$-th incoming boundary map} $\partial_i\colon S^1\to |c|$ is defined as follows.
Write $c=c(x,f,z)$.
Then $c(I_i(x))\subset |c|$ has the form $x_1^i\times\cdots\times S^1\times\cdots\times x_n^i$ for some \emph{lobe coordinate} $(x_1^i,\ldots,x_{i-1}^i,x_{i+1}^i,\ldots,x_n^i)$, and $\partial_i$ is $y\mapsto(x_1^i,\ldots,x_{i-1}^{i},y,x_{i+1}^i,\ldots,x_n^i)$.
The \emph{outgoing boundary map}  $\partial_\rmout\colon S^1\to |c|$ is given by $c$ itself.

\item
Let $c\in\C(n)$ and let $\sigma\in\Sigma_n$.
Then the homeomorphism $\sigma^\ast\colon |c|\to |c\sigma|$ is defined by $\sigma^\ast(t_1,\ldots,t_n)=(t_{\sigma 1},\ldots,t_{\sigma n})$.

\item
Let $c\in\C(n)$ and $d_i\in\C(m_i)$ for $i=1,\ldots,n$.
Then there is a pushout square
\begin{equation}\label{CactiPastingDiagram}
\xymatrix@=35 pt{
\bigsqcup_{i=1}^n S^1\ar[r]^-{\bigsqcup\partial_\rmout}\ar[d]_{\bigsqcup\partial_i} & |d_1|\sqcup\cdots\sqcup |d_n|\ar[d] \\
|c|\ar[r] & |\gamma(c;d_1,\ldots,d_n)|.
}
\end{equation}
The lower map sends $|c|=c(S^1)$ into $|\gamma(c;d_1,\ldots,d_n)|=(d_1\times\cdots\times d_n)c(S^1)$ via $d_1\times\cdots\times d_n$.
The right hand map sends $y\in|d_i|$ to \[(d_1(x^i_1),\ldots,d_{i-1}(x^i_{i-1}),y,d_{i+1}(x^i_{i+1}),\ldots,d_n(x^i_{n})).\]
\end{itemize}
\end{definition}

\begin{example}\label{CactusBoundaryExample}
The next figures illustrate the incoming
\begin{center}
\begin{lpic}[]{figures/CactusIncoming(,0.6 in)}
\lbl{120,37;$\bigsqcup\partial_i$}
\end{lpic}
\end{center}
and outgoing
\begin{center}
\begin{lpic}[]{figures/CactusOutgoing(,0.6 in)}
\lbl{95,38;$\partial_\rmout$}
\end{lpic}
\end{center}
boundary maps for the cactus of Example~\ref{CactusElementExample}.
\end{example}

\begin{definition}[Topology on the realizations of cacti]
We define $\R\C(n)$ to be the space of pairs
\[\R\C(n)=\{(c,\mathbf{z})\in\C(n)\times(S^1)^n\mid \mathbf{z}\in c(S^1)\}.\]
The fibre of the \emph{projection map} $\rho_n\colon\R\C(n)\to\C(n)$
over $c\in\C(n)$ is naturally identified with $|c|$.
As a closed subspace of $\C(n)\times(S^1)^n$, it is fibrewise compact and Hausdorff over $\C(n)$.
\end{definition}

It remains to show that the last two definitions do indeed endow $\C$ with a realization system $\R\C$.
This is given by the next proposition.

\begin{proposition}\label{CactiProposition}
\begin{enumerate}
\item 
Diagram \eqref{CactiPastingDiagram} is a pushout, and in fact a homotopy pushout.
\item
The realizations of cacti satisfy the conditions of Definition~\ref{RealizationsAxiomsDefinition}.
\item
The topology on realizations of cacti satisfy the conditions of Definition~\ref{RealizationsTopologyDefinition}.
\end{enumerate}
\end{proposition}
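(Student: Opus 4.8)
The plan is to establish parts (1), (2) and (3) in that order, since the fibrewise pushout statement in (3) is just the parametrised form of the pushout statement in (1), while (2) is a formal check once the coordinate formulas are on the table. I expect the genuinely substantive step to be the proof that the pasting square \eqref{CactiPastingDiagram} is a pushout; everything else is bookkeeping.

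For part (1), first make the target explicit: by the definition of $\gamma$ in $\CoEnd(S^1)$ we have $|\gamma(c;d_1,\ldots,d_n)| = (d_1\times\cdots\times d_n)(c(S^1))$. Now $|c| = c(S^1)$ is the union of its lobe circles $\partial_i(S^1) = c(I_i(x))$, and on the $i$-th lobe circle the map $d_1\times\cdots\times d_n$ holds every coordinate block $j\neq i$ constant (at $d_j$ applied to the $j$-th entry of the $i$-th lobe coordinate) while the $i$-th block traces out $d_i(S^1) = |d_i|$; in other words $d_1\times\cdots\times d_n$ carries the $i$-th lobe circle onto a copy of $|d_i|$ via the outgoing boundary map $\partial_\rmout = d_i$ of $d_i$. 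From this the square \eqref{CactiPastingDiagram} visibly commutes and its lower and right-hand maps are jointly surjective, so the pushout $P$ of $\bigsqcup\partial_i$ and $\bigsqcup\partial_\rmout$ --- which is compact, being a quotient of a compact space --- maps continuously onto the Hausdorff space $|\gamma(c;\underline d)|$, and it is enough to show this map is injective. This is the heart of the matter: one must verify that two points of $|c|$, or of the $|d_i|$, which become equal in $|\gamma(c;\underline d)|$ are already identified in $P$. The mechanism is that the lobe coordinates $x_j^i$ are ``attaching-point'' data that is consistent along paths in the dual tree of the cactus, so that identifications in $P$ cascade correctly through the intermediate lobes --- precisely because the lower map $|c|\to P$ is itself non-injective, the maps $d_i$ being pinch maps rather than embeddings; the treelike structure together with the non-interleaving condition defining $\calF(n)$ are exactly what is needed to push this through. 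For the refinement that \eqref{CactiPastingDiagram} is a \emph{homotopy} pushout --- which is not automatic here, since $\partial_\rmout$ is not a cofibration --- I would note that every realization is a finite graph, write $|c|$ and $|\gamma(c;\underline d)|$ as colimits of, respectively, their lobe circles and their copies of the $|d_i|$ glued at the finitely many attaching points, observe that these colimits are homotopy colimits because the attaching points include into graphs by cofibrations, and deduce that \eqref{CactiPastingDiagram} is, up to weak equivalence, a pushout along the cofibration including the disjoint lobe circles into $|c|$.

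For part (2) I would run through the six axioms of Definition~\ref{RealizationsAxiomsDefinition} in turn. Each follows directly from the coordinate descriptions of $\partial_i$, $\partial_\rmout$ and $\sigma^\ast$, from the formula for \eqref{CactiPastingDiagram}, and from the fact that composition in $\C\subseteq\CoEnd(S^1)$ is composition of maps out of $S^1$: the unit axiom holds because $|\mathbf{1}_\C| = S^1$ with $\partial_1 = \partial_\rmout = \mathrm{id}$; the symmetry axiom is the identity $\sigma^\ast(t_1,\ldots,t_n) = (t_{\sigma 1},\ldots,t_{\sigma n})$ plus an index chase; and the three pasting axioms together with associativity reduce to the evident compatibility of the product maps $d_1\times\cdots\times d_n$ with permutations and with iterated composition. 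This is lengthy but mechanical.

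For part (3), the identification of the fibre $\rho_n^{-1}(c)$ with $|c|$ and the fibrewise compact Hausdorff property are built into the definition of $\R\C(n)$ as a closed subspace of $\C(n)\times(S^1)^n$. The symmetry maps and $\partial_\rmout$ are manifestly continuous; the one point needing care is continuity of $\partial_i$, which amounts to showing that the lobe coordinate $(x_1^i,\ldots,x_{i-1}^i,x_{i+1}^i,\ldots,x_n^i)$ of $c = c(x,f,\underline z)$ depends continuously on $c$, and this I would check cell by cell using the decomposition $\{D_{\underline X}\}$ of $\calF(n)$ recalled in Note~\ref{CellDecompositionNote}, verifying compatibility across common faces. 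Finally, the fibrewise pushout property of \eqref{FibrewisePastingDiagram} is the parametrised version of part (1): combining the fibrewise bijection coming from part (1) with the continuity just established, and using that the relevant fibrewise surjection is proper because $S^1$ is compact, one upgrades it to a fibrewise homeomorphism via the results of Appendix~\ref{FibrewiseAppendix}. As indicated above, the injectivity argument inside part (1) is the step I expect to require real work; the rest is routine.
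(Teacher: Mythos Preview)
Your treatment of parts (2) and (3) matches the paper's almost exactly: axioms 1--5 are immediate from the coordinate formulas, continuity of $\partial_i$ is checked via the cell decomposition of $\calF(n)$, and the fibrewise pushout is obtained from the pointwise one using the proper-surjection criterion in the appendix. For axiom~6 the paper gives a slightly slicker argument than yours---rather than chasing product maps it observes that $\partial_\rmout\colon S^1\to|x|$, $\partial_\rmout\colon S^1\to|y_i|$ and $\bigsqcup\partial_k\colon\bigsqcup S^1\to|z_i^j|$ are all \emph{surjective}, so the two candidate maps into $|\Gamma|$ are determined by their composites with these surjections, which agree by the pasting--boundary compatibility.

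The real divergence is in part (1). You attack the full pasting square directly: your compact-to-Hausdorff bijection argument for the pushout is fine, but it forces you into a delicate injectivity check, and for the homotopy pushout you correctly flag that $\partial_\rmout$ is not a cofibration---yet neither is $\bigsqcup_i\partial_i\colon\bigsqcup S^1\to|c|$, since it is surjective. Your proposed fix via homotopy colimits of graphs can be made to work, but the final sentence (``a pushout along the cofibration including the disjoint lobe circles into $|c|$'') is not right as stated, since that inclusion is precisely $\bigsqcup\partial_i$. The paper sidesteps all of this with a single observation: by an easy induction it suffices to treat the case where all but one $d_i$ equal $\mathbf{1}_\C$, i.e.\ the square
\[
\xymatrix{
S^1\ar[r]^{\partial_\rmout}\ar[d]_{\partial_i} & |d|\ar[d]\\
|c|\ar[r] & |c\circ_i d|.
}
\]
Here the \emph{single} incoming boundary $\partial_i\colon S^1\hookrightarrow|c|$ is the inclusion of one lobe as a subcomplex of a finite graph, hence a cofibration, so the square is automatically a homotopy pushout once it is a pushout. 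This reduction is the key simplification you are missing; it eliminates both the injectivity bookkeeping and the need for any auxiliary homotopy-colimit argument.
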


\begin{proof}
For the first part, a simple inductive argument shows that it is sufficient to prove the claim when all but one of the $d_i$ is the unit element.
To prove this case it suffices to show that, given $c\in\C(n)$ and $d\in\C(m)$, the diagram
\[\xymatrix{
S^1\ar[r]^{\partial_\rmout}\ar[d]_{\partial_i} & |d|\ar[d]^\beta\\
|c|\ar[r]_-\alpha & |c\circ_i d|
}\]
is a pushout.
This follows from the definitions and from the fact that $\partial_i$ is a cofibration.

We now prove the second part.
Axioms 1, 2, 3, 4 and 5 follow immediately from the definitions.
Let us turn to axiom 6.
Write $\Gamma=\gamma(\gamma(x,\underline y);\underline z)=\gamma(x;\underline{\gamma(y_i;\underline z_i)})$.
The compatibility of pasting and boundary maps means that the pairs of maps
\[u_1,u_2\colon |x|\to |\Gamma|,\quad
v_1,v_2\colon |y_i|\to |\Gamma|,\quad
w_1,w_2\colon |z_i^j|\to |\Gamma|\]
satisfy $u_p\circ\partial_\rmout=\partial_\rmout$, $v_p\circ\partial_\rmout=\partial_i$,
and $w_p\circ\partial_{k}=\partial_{L+k}$, where 
$L=l^1_1+\cdots+l_1^{m_1}+\cdots+l_i^1+\cdots+l_i^{j-1}$.
Since $\partial_\rmout\colon S^1\to |x|$, $\partial_\rmout\colon S^1\to |y_i|$ and $\bigsqcup \partial_k\colon\bigsqcup S^1\to |z_i^j|$ are all surjections, it follows that the two members of each pair of maps coincide.

Now we prove the final part.
Continuity of $\partial_i\colon\C(n)\times S^1\to\R\C(n)$ follows from the fact that the lobe coordinates of a cactus depend continuously on the cactus.
This can be proved using the cell decomposition of $\calF(n)$ given in Note~\ref{CellDecompositionNote}.
Continuity of $\partial_\rmout\colon\C(n)\times S^1\to\R\C(n)$ and $\sigma^\ast\colon\R\C(n)\to\R\C(n)$ is immediate.

Note that $\bigsqcup\partial_i\colon\C(n)\times\bigsqcup S^1\to\R\C(n)$ are continuous surjections between fibrewise compact Hausdorff spaces.
By Proposition~\ref{ContinuityProposition}, to prove continuity of the right-hand map of \eqref{FibrewisePastingDiagram} it will suffice to show that the composite with $\bigsqcup\partial_i$ is continuous.
But this follows from the compatibility of pasting with boundaries.
Similar reasoning shows that the lower map of \eqref{FibrewisePastingDiagram} is continuous.
\end{proof}

\section{The operad $\calE$}\label{ESection}

In Sections~\ref{fDSection} and~\ref{CSection} we saw that the framed little discs operad $\fD$ and the cacti operad $\C$ admit realization systems $\R\fD$ and $\R\C$ with boundaries $S^1$.
These systems both consist of fibrewise compact Hausdorff spaces.
We may therefore form the mapping operad $\calM$ of Section~\ref{RealizationSection} in the case $\calP=\fD$, $\calQ=\C$.
This is a topological operad, and since the pasting squares \eqref{PastingDiagram} for $\R\fD$ and $\R\C$ are homotopy pushouts, it has a suboperad $\calM_\simeq$ consisting of triples $(a,c,f)$ in which $f$ is a homotopy equivalence.

\begin{definition}
Set $\calE=\calM_\simeq$.
Thus $\calE(n)$ is the space of triples $(a,c,f)$, where $a\in\fD(n)$, $c\in\C(n)$, and $f\colon |a|\to|c|$ is a homotopy equivalence satisfying $f\circ\partial_i=\partial_i$ and $f\circ\partial_\rmout=\partial_\rmout$.
\end{definition}

An element $(a,c,f)\in\calE(2)$ is depicted in the introduction.
Theorem~\ref{MappingOperadIsAnOperadTheorem} immediately gives us the following result.
With this in hand, to prove Theorem~A it remains to show that the projection maps $\pi_1$ and $\pi_2$ are weak homotopy equivalences.

\begin{proposition}
$\calE$ is a topological operad, and the projections
\[\fD\xleftarrow{\quad \pi_1\quad}\calE\xrightarrow{\quad\pi_1\quad}\C\]
are morphisms of operads.
\end{proposition}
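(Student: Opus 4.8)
The plan is to derive this statement directly from the general machinery of Section~\ref{RealizationSection}, specialised to $\calP=\fD$ and $\calQ=\C$. First I would check that this specialisation is legitimate: as recorded in Sections~\ref{fDSection} and~\ref{CSection}, both $\R\fD$ and $\R\C$ are realization systems with boundaries $S^1$, and in each case the total spaces $\R\fD(n)$ and $\R\C(n)$ are fibrewise compact Hausdorff over $\fD(n)$, respectively $\C(n)$. Thus the mapping operad construction of \S\ref{RealizationMappingSubsection} applies, and Theorem~\ref{MappingOperadIsAnOperadTheorem} tells us that the associated mapping operad $\calM$ is a topological operad and that the two forgetful maps $\fD\longleftarrow\calM\longrightarrow\C$ are morphisms of operads.

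Next I would pass from $\calM$ to the suboperad $\calM_\simeq$ of triples $(a,c,f)$ with $f$ a homotopy equivalence, which by definition is $\calE$. This step is governed by Proposition~\ref{SubOperadProposition}, whose sole hypothesis is that the pasting squares~\eqref{PastingDiagram} of both realization systems are homotopy pushouts, not merely pushouts. That hypothesis has already been verified: for $\R\fD$ it is part of the definition of the realization system in \S\ref{fDRealizationsSubsection}, and for $\R\C$ it is Proposition~\ref{CactiProposition}(1). Hence Proposition~\ref{SubOperadProposition} gives that $\calE=\calM_\simeq$ is a suboperad of $\calM$; in particular $\calE$ is a topological operad. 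Finally, the maps $\pi_1$ and $\pi_2$ are by construction the restrictions to $\calE$ of the forgetful projections $\calM\to\fD$ and $\calM\to\C$, so they too are morphisms of operads, being restrictions of operad morphisms to a suboperad.

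I do not expect any genuine obstacle here: all of the real work — that $\calM$ is an operad with the stated projections, and that the homotopy-equivalence condition is preserved under composition and the symmetric group action — is already contained in Theorem~\ref{MappingOperadIsAnOperadTheorem} and Proposition~\ref{SubOperadProposition}, while the homotopy-pushout property of the two pasting squares has been established in Sections~\ref{fDSection} and~\ref{CSection}. The only thing to be careful about is to confirm that the fibrewise compact Hausdorff hypotheses needed to invoke Theorem~\ref{MappingOperadIsAnOperadTheorem} do indeed hold for $\R\fD$ and $\R\C$, which is immediate since in both cases the total spaces are realised as closed subspaces of a product of $\fD(n)$ (resp.\ $\C(n)$) with a compact Hausdorff space.
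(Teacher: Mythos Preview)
Your proposal is correct and matches the paper's own argument essentially verbatim: the paper states that the proposition follows immediately from Theorem~\ref{MappingOperadIsAnOperadTheorem}, having already noted in the preceding paragraph that $\calM_\simeq$ is a suboperad because the pasting squares for $\R\fD$ and $\R\C$ are homotopy pushouts (i.e., the content of Proposition~\ref{SubOperadProposition}). You have simply spelled out the same chain of citations in slightly more detail.
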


\section{Algebras from topological groups}\label{AlgebrasSection}

The aim of this paper is to compare the $\fD$-algebra $\Omega^2 BG$ with the $\C$-algebra $\Omega G$ by constructing an intermediate $\calE $-algebra $\varepsilon G$.
This section introduces all three algebras using the realization systems for $\fD$ and $\C$, and proves Theorem~B, which states that the three are weakly equivalent as $\calE $-algebras.

We begin in \S\ref{OmegaTwoBGSubsection} with $\Omega^2 BG$.
There is little to say in this case, but we include it for emphasis.
Then \S\ref{OmegaGSubsection} discusses $\Omega G$ and \S\ref{varepsilonGSubsection} discusses $\varepsilon G$.
Finally \S\ref{AlgebraProofsSubsection} gives some deferred proofs.

\subsection{The $\fD$-algebra $\Omega^2 BG$}\label{OmegaTwoBGSubsection}

In order to demonstrate how realization systems on an operad can be used to define algebras over that operad, we will 
briefly describe the $\fD$-algebra $\Omega^2 BG$.
Proofs are omitted.
First note that, for a fixed $a\in\fD(n)$, there is a pushout diagram
\[
\xymatrix{
\bigsqcup_{i=1}^n S^1\ar@{^{(}->}[r]\ar[d]_{\bigsqcup\partial_i} & \bigsqcup_{i=1}^n D^2\ar[d]^a \\
|a|\ar@{^{(}->}[r] & D^2.
}
\]

\begin{lemma}\label{nuPatchingLemma}
Given $a\in\fD(n)$ and $x_1,\ldots,x_n\in\Omega^2 BG$, there is a unique map $\xi\colon D^2\to BG$ with the following two properties.
\begin{enumerate}
\item
Its restriction to $|a|$ is constant.
\item
Its restriction to the $i$-th cofactor of $\bigsqcup_{i=1}^n D^2$ is given by $x_i$.
\end{enumerate}
\end{lemma}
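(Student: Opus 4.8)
The plan is to read $\xi$ off from the universal property of the pushout square displayed just before the lemma. The first step is to make that pushout explicit at the level of point-set topology: $D^2$ is the union of the two closed subsets $|a|$ and $\bigcup_{i=1}^n a_i(D^2)$ (the latter being the images of the little discs), and their intersection is exactly $\bigcup_{i=1}^n a_i(S^1)=\bigcup_{i=1}^n \partial_i(S^1)$. By the pasting lemma for maps out of a space covered by two closed subsets, $D^2$ is therefore the pushout of $\bigsqcup_{i=1}^n S^1 \xrightarrow{\bigsqcup\partial_i} |a|$ along $\bigsqcup_{i=1}^n S^1 \hookrightarrow \bigsqcup_{i=1}^n D^2$, where the second map is $a$ restricted to the boundary circles.

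Next I would assemble the required cocone. Take the constant map $|a|\to BG$ at the basepoint $\ast$ together with the maps $x_i\colon D^2\to BG$. The one thing to check before invoking the universal property is that these agree after restriction to $\bigsqcup S^1$. Along the $i$-th circle the composite through $|a|$ is $S^1\xrightarrow{\partial_i}|a|\to BG$, which is constant at $\ast$, while the composite through the $i$-th little disc is $S^1\hookrightarrow D^2\xrightarrow{x_i}BG$, which is also constant at $\ast$ because an element of $\Omega^2 BG$ is by definition a based map $(D^2,S^1)\to(BG,\ast)$ and so carries $\partial D^2$ to $\ast$. Hence the two composites coincide, and the pushout produces a unique continuous $\xi\colon D^2\to BG$ whose restriction to $|a|$ is the constant map at $\ast$ (property~(1)) and whose restriction to the $i$-th cofactor of $\bigsqcup D^2$ is $x_i$ (property~(2)).

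For uniqueness, suppose $\xi'$ also satisfies (1) and (2). Property~(1) says $\xi'|_{|a|}$ is constant; since $|a|$ is connected and contains $a_1(S^1)$, on which property~(2) forces $\xi'$ to equal $x_1|_{S^1}=\ast$, the constant value must be $\ast$, so $\xi'|_{|a|}=\xi|_{|a|}$. Together with (2) this shows $\xi$ and $\xi'$ restrict to the same maps on $|a|$ and on each little disc, whence $\xi=\xi'$ by the uniqueness clause in the universal property of the pushout. The only place any care is needed is the boundary-compatibility check in the second step, and even that is immediate once one unwinds what $\Omega^2 BG$ means; everything else is purely formal.
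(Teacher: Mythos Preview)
Your argument is correct and is exactly the intended one: the paper itself omits the proof (the subsection opens with ``Proofs are omitted''), so there is nothing to compare against, but your use of the displayed pushout square together with the boundary condition built into the definition of $\Omega^2 BG$ is the natural route. One small remark: your uniqueness step invokes $a_1(S^1)$, which tacitly assumes $n\geqslant 1$; for $n=0$ property~(2) is vacuous and ``constant'' must be read as ``constant at the basepoint'' for uniqueness to hold, which is clearly what is meant since the resulting $\xi$ is supposed to lie in $\Omega^2 BG$.
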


\begin{definition}
Define $\nu_n\colon\fD(n)\times(\Omega^2 BG)^n\to\Omega^2 BG$ by
\[\nu_n(a;x_1,\ldots,x_n)=\xi,\]
where $\xi$ is the map of Lemma~\ref{nuPatchingLemma}.
\end{definition}

\begin{proposition}
The collection $\nu=\{\nu_n\}$ makes $\Omega^2 BG$ into a $\fD$-algebra.
\end{proposition}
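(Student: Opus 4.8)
The plan is to check the operad-algebra axioms for $\nu=\{\nu_n\}$ — unit, equivariance, associativity, and continuity — essentially all as consequences of the uniqueness clause in Lemma~\ref{nuPatchingLemma}. Throughout I would use the identification of $\Omega^2 BG$ with the space of maps $D^2\to BG$ that are constant, with value the basepoint, on $S^1=\partial D^2$. The one structural fact driving everything is the pushout square displayed just before Lemma~\ref{nuPatchingLemma}: for each $a\in\fD(n)$ the big disc $D^2$ is the pushout of $|a|\xleftarrow{\bigsqcup\partial_i}\bigsqcup_{i=1}^n S^1\hookrightarrow\bigsqcup_{i=1}^n D^2$ along the little-disc embeddings, so to give a map $D^2\to BG$ is to give compatible maps on $|a|$ and on each little disc; Lemma~\ref{nuPatchingLemma} is the special case in which the map on $|a|$ is required to be constant, and $\nu_n(a;x_1,\dots,x_n)$ is by definition the resulting glued map.

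For the unit axiom one notes that $|\mathbf{1}_\fD|=S^1$ while the single little disc of $\mathbf{1}_\fD$ is the identity $D^2\to D^2$, so $\nu_1(\mathbf{1}_\fD;x)$ is the unique map constant on $S^1$ and equal to $x$ on $D^2$, namely $x$ itself. For equivariance, both $\nu_n(a\sigma;x_1,\dots,x_n)$ and $\nu_n(a;x_{\sigma 1},\dots,x_{\sigma n})$ are constant on $|a\sigma|=|a|$ and restrict to $x_{\sigma i}$ on the $i$-th little disc of $a\sigma$ (which is the $\sigma i$-th little disc of $a$), so they coincide by uniqueness. For associativity I would invoke the pasting square \eqref{fDPastingDiagram}, which exhibits $|\gamma(a;\underline b)|$ as $|a|\cup\bigcup_i a_i(|b_i|)$ inside $D^2$, together with the fact that the $(i,j)$-th little disc of $\gamma(a;b_1,\dots,b_n)$ is $a_i\circ (b_i)_j$. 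A short computation with property (2) of Lemma~\ref{nuPatchingLemma} shows that $\nu_n\bigl(a;\nu_{m_1}(b_1;x_{1,\bullet}),\dots,\nu_{m_n}(b_n;x_{n,\bullet})\bigr)$ is constant on $|\gamma(a;\underline b)|$ and restricts to $x_{i,j}$ on the $(i,j)$-th little disc; this is precisely the characterization of $\nu_{m_1+\cdots+m_n}(\gamma(a;\underline b);\dots)$, so the two composites agree.

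The remaining point, and the one where I expect the only genuine work, is continuity of $\nu_n\colon\fD(n)\times(\Omega^2 BG)^n\to\Omega^2 BG$. Here I would argue fibrewise over $\fD(n)$, mirroring the continuity argument in the proof of Theorem~\ref{MappingOperadIsAnOperadTheorem}. The space $\fD(n)\times D^2$, the total space $\R\fD(n)$, the pulled-back discs $\fD(n)\times\bigsqcup D^2$, and the pulled-back boundaries $\fD(n)\times\bigsqcup S^1$ fit into a fibrewise pushout square over $\fD(n)$ (the fibrewise version of the square before Lemma~\ref{nuPatchingLemma}) whose left-hand map $\fD(n)\times\bigsqcup S^1\to\R\fD(n)$ is a proper fibrewise surjection, since $S^1$ is compact. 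Applying the fibrewise mapping space $\Map_{\fD(n)}(-,BG)$ and using the same two facts employed for $\calM$ — that a proper fibrewise surjection induces a continuous embedding of fibrewise mapping spaces, and that fibrewise mapping spaces carry fibrewise pushouts to fibrewise pullbacks — identifies $\fD(n)\times(\Omega^2 BG)^n$, via $(a;x_1,\dots,x_n)\mapsto$ (the glued map over $a$), with a subspace of $\Map_{\fD(n)}(\fD(n)\times D^2,BG)$, and then projecting off the $\fD(n)$-coordinate gives $\nu_n$ as continuous. Thus the main obstacle is really just careful bookkeeping with the fibrewise-topology formalism: the algebraic axioms fall out of uniqueness, and continuity is a direct adaptation of the argument already carried out for the mapping operad.
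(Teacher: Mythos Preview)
The paper omits this proof entirely (``Proofs are omitted''), so there is nothing to compare against; what follows is an assessment of your argument on its own terms.

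Your verification of the unit, equivariance and associativity axioms via the uniqueness clause of Lemma~\ref{nuPatchingLemma} is correct and is exactly the intended use of the realization-system formalism. For continuity, however, one claim is false: the left-hand map $\fD(n)\times\bigsqcup S^1\to\R\fD(n)$ is \emph{not} a fibrewise surjection, since $|a|$ is a $2$-dimensional region and $\bigsqcup\partial_i$ only hits the boundaries of the little discs. (You may be thinking of the cacti case, where $\partial_\rmout$ does surject onto $|c|$.) Fortunately this claim is not needed. The fibrewise pushout square alone already gives what you want: $\fD(n)\times D^2$ is the union of the closed fibrewise subspaces $\R\fD(n)$ and the images of the little-disc embeddings, so the adjoint map $\fD(n)\times(\Omega^2 BG)^n\times D^2\to BG$ is continuous because its restriction to $\R\fD(n)\times(\Omega^2 BG)^n$ is the constant map at the basepoint and its restriction to the $i$-th little-disc piece is $(a,\underline{x},z)\mapsto x_i(z)$, both evidently continuous. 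This is rather more elementary than the mapping-operad argument you are mimicking; no embeddings of fibrewise mapping spaces are required here.
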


\subsection{The $\C$-algebra $\Omega G$}\label{OmegaGSubsection}

We now explain how to make $\Omega G$ into a $\C$-algebra.
This algebra structure was first obtained by Salvatore in \cite{\Salvatore} using his proof of the topological cyclic Deligne conjecture.
We will describe it directly in terms of the realizations $\R\C$.

\begin{notation}
Let $X$ be a space and let $Y$ be a $G$-space.
Given a continuous map $f\colon X\to Y$ and an element $g\in G$, we write $g\cdot f\colon X\to Y$ for the map that sends $x\in X$ to $g\cdot f(x)$.
Such maps will be called \emph{left translates} of $f$.
\end{notation}

\begin{lemma}\label{PatchingLemma}
Let $c$ be a cactus with $n$ lobes and let $\gamma_1,\ldots,\gamma_n$ be elements of $\Omega G$.
Then there is a unique map $\alpha\colon |c|\to G$ satisfying:
\begin{enumerate}
\item
$\alpha(\bullet)=e$, where $\bullet$ denotes the global marked point in $|c|$;
\item
for each $i=1,\ldots,n$, the composite $\alpha\circ\partial_i\colon S^1\to G$ is a left translate of $\gamma_i$.
\end{enumerate}
\end{lemma}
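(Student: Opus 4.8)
\textbf{Proof proposal for Lemma~\ref{PatchingLemma}.}

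The plan is to build $\alpha$ by first choosing the left translates explicitly, and then verifying that they agree on overlaps, so that they glue to a well-defined continuous map on $|c|$. Recall that $|c|=c(S^1)\subset(S^1)^n$ is covered by the images $c(I_1(x)),\ldots,c(I_n(x))$ of the incoming circles, each of which is homeomorphic to $S^1$ via $\partial_i$, and that the $i$-th and $j$-th such circles meet precisely along the finite set of intersection points of the underlying cactus. First I would fix a point in each lobe — the natural choice is to work relative to the global marked point $\bullet$. Traverse the pinch map $\partial_\rmout\colon S^1\to|c|$ starting at $\bullet$; this selects, for each lobe $i$, a first point $p_i\in c(I_i(x))$ at which that lobe is entered. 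Writing $p_i=\partial_i(t_i)$ for a unique $t_i\in S^1$, I would then \emph{define} the value $\alpha(p_i)$ recursively along the tree structure: $\alpha(\bullet)=e$, and if lobe $i$ is attached to the already-traversed part of the cactus at a point where the running value is $g$, set the left translate on lobe $i$ to be $g\cdot(\gamma_i\circ L_{-t_i})$ suitably, i.e. the unique left translate of $\gamma_i$ whose value at the attaching point is $g$. Concretely, condition (2) forces $\alpha\circ\partial_i = g_i\cdot\gamma_i'$ for some $g_i\in G$ and some rotation $\gamma_i'$ of $\gamma_i$; once we know the value of $\alpha$ at one point of the $i$-th circle, both $g_i$ and the rotation are pinned down, because $\gamma_i$ is a \emph{based} loop and hence the rotation is determined by which point of $S^1$ maps to the attaching point.

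The key steps, in order, are: (i) observe that the treelike condition on the cactus gives a well-defined partial order on lobes (distance from the lobe carrying $\bullet$ in the dual tree), so the recursive definition above makes sense and terminates; (ii) at each intersection point, check that the two (or more) lobes meeting there receive a consistent value of $\alpha$ — this is automatic from the recursion because each lobe other than the root is attached to exactly one previously-processed lobe, by treelikeness, so there is never a constraint coming from two directions at once; (iii) deduce that the $\alpha\circ\partial_i$ patch together, via the pushout/pasting description of $|c|$ (iterating diagram~\eqref{CactiPastingDiagram} over the lobes, or directly: $|c|$ is the quotient of $\bigsqcup_i S^1$ by the identifications recording the intersection points), to a continuous map $\alpha\colon|c|\to G$; (iv) check that this $\alpha$ satisfies (1) and (2) by construction; (v) prove uniqueness: any $\alpha$ satisfying (1) and (2) has its restriction to each lobe forced — on the root lobe by $\alpha(\bullet)=e$ together with condition (2), and then inductively on each further lobe by continuity at the attaching intersection point together with condition (2) — so it coincides with the one constructed.

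I expect step (iii), the gluing, to be the only place requiring genuine care, and it is the main obstacle: one must be sure that the identifications defining $|c|$ from $\bigsqcup_i S^1$ involve \emph{only} pairs of points that the recursion has arranged to carry equal $\alpha$-values, with no extra relations. This is exactly where the treelike hypothesis is used — if the cactus had a cycle, an intersection point could be reached along two incompatible branches and the values of $\gamma_i$ prescribed at the two attaching points would generically disagree, so no $\alpha$ would exist. Since the cactus is a tree, every intersection point lies on a unique path from $\bullet$, and the recursion assigns values consistently; the pushout square~\eqref{CactiPastingDiagram}, together with the fact (used in the proof of Proposition~\ref{CactiProposition}) that the $\partial_i$ are cofibrations, then upgrades the set-level gluing to a continuous map. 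Everything else — existence and uniqueness of the left translate with a prescribed value at a point, the recursion terminating, conditions (1) and (2) — is routine.
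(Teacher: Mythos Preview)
Your approach is essentially the paper's: order the lobes by their distance in the dual tree from the lobe containing $\bullet$, then extend $\alpha$ one lobe at a time, using the value at the unique attaching point to pin down the left translate; the paper compresses your steps (i)--(v) into a three-line induction. One small slip: a \emph{left translate} of $\gamma_i$ is $g_i\cdot\gamma_i$ with no rotation, so your formula $g\cdot(\gamma_i\circ L_{-t_i})$ and the phrase ``both $g_i$ and the rotation are pinned down'' are off --- the correct translate is $g_i\cdot\gamma_i$ with $g_i=g\cdot\gamma_i(t_i)^{-1}$, and your verbal description (``the unique left translate whose value at the attaching point is $g$'') is already the right thing.
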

\begin{proof}
The realization $|c|$ is a treelike configuration of circles, so after relabelling the lobes we may assume that $\partial_1(S^1)$ contains $\bullet$ and that each $\partial_{i+1}(S^1)$ meets $\partial_1(S^1)\cup\cdots\cup\partial_i(S^1)$ in a single point.
There is a unique $\alpha^1\colon\partial_1(S^1)\to G$ sending $\bullet$ to $e$ and such that $\alpha^1\circ\partial_1$ is a left translate of $\gamma_i$.
It has a unique extension $\alpha^2\colon\partial_1(S^1)\cup\partial_2(S^1)\to G$ for which $\alpha^2\circ\partial_2$ is a left translate of $\gamma_2$.
Proceeding in this way, the claim follows.
\end{proof}

\begin{definition}
Define
$\omega_n\colon\C(n)\times(\Omega G)^n\to\Omega G$
by 
\[\omega_n(c;\gamma_1,\ldots,\gamma_n)=\alpha\circ\partial_\rmout,\]
where $\alpha\colon |c|\to G$ is the map of Lemma~\ref{PatchingLemma}.
\end{definition}

\begin{proposition}\label{OmegaGIsAnAlgebraProposition}
The collection $\omega=\{\omega_n\}$ makes $\Omega G$ into a $\C$-algebra.
\end{proposition}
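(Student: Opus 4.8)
The plan is to check the four conditions defining an algebra over $\C$: continuity of each $\omega_n$, equivariance of $\omega=\{\omega_n\}$ under the symmetric groups, the unit condition $\omega_1(\mathbf 1_\C;\gamma)=\gamma$, and compatibility of $\omega$ with operadic composition. The last three are formal consequences of the realization-system axioms for $\R\C$ (Definition~\ref{RealizationsAxiomsDefinition}) once Lemma~\ref{PatchingLemma} is strengthened slightly, while the continuity of $\omega_n$ is the only genuinely technical point; I would defer it to \S\ref{AlgebraProofsSubsection}. The strengthening of Lemma~\ref{PatchingLemma} I want is: for a cactus $c$ with $n$ lobes and loops $\gamma_1,\dots,\gamma_n\in\Omega G$, the collection of \emph{all} continuous maps $\alpha\colon|c|\to G$ for which every $\alpha\circ\partial_i$ is a left translate of $\gamma_i$ is a single orbit under left translation by $G$; in particular $\alpha$ is determined by its value at any one point of $|c|$. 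This is proved by the very argument that establishes Lemma~\ref{PatchingLemma}: fixing $\alpha$ at one point of $\partial_1(S^1)$ determines it on $\partial_1(S^1)$, and each further lobe meets the previously constructed part in a single point, which pins down the remaining ambiguity. Normalising by $\alpha(\bullet)=e$ recovers the statement of the Lemma.

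\textbf{Compatibility with composition.} Fix $c\in\C(n)$, $d_i\in\C(m_i)$, and loops $\gamma_i^j\in\Omega G$ for $i=1,\dots,n$ and $j=1,\dots,m_i$; write $\underline{\gamma_i}=\gamma_i^1,\dots,\gamma_i^{m_i}$. By Proposition~\ref{CactiProposition} the realization $|\gamma(c;\underline d)|$ is the pushout \eqref{CactiPastingDiagram}, with lower map $\iota_c\colon|c|\to|\gamma(c;\underline d)|$ and right-hand map $\bigsqcup_i\iota_i\colon\bigsqcup_i|d_i|\to|\gamma(c;\underline d)|$. Let $\alpha$ be the patching map of Lemma~\ref{PatchingLemma} for the composite cactus and the loops $\gamma_i^j$. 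By the pasting-and-boundaries axiom the $(i,j)$-th incoming boundary map of $\gamma(c;\underline d)$ is $\iota_i\circ\partial_j$, so $(\alpha\circ\iota_i)\circ\partial_j$ is a left translate of $\gamma_i^j$ for each $j$; by the strengthened lemma $\alpha\circ\iota_i$ is a left translate of the normalised patching map $\alpha_i$ for $(d_i;\underline{\gamma_i})$. Using that \eqref{CactiPastingDiagram} commutes, $(\alpha\circ\iota_c)\circ\partial_i=\alpha\circ\iota_i\circ\partial_\rmout$ is a left translate of $\alpha_i\circ\partial_\rmout=\omega_{m_i}(d_i;\underline{\gamma_i})$. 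Thus $\alpha\circ\iota_c\colon|c|\to G$ composed with each $\partial_i$ is a left translate of $\omega_{m_i}(d_i;\underline{\gamma_i})$; and since the pasting-and-boundaries axiom also gives $\iota_c\circ\partial_\rmout=\partial_\rmout$, the map $\alpha\circ\iota_c$ sends the global marked point of $c$ (which is $\partial_\rmout(\ast)$) to $\alpha(\bullet)=e$. By the strengthened lemma these two facts force $\alpha\circ\iota_c$ to be the patching map for $c$ and the loops $\omega_{m_i}(d_i;\underline{\gamma_i})$. Composing once more with $\partial_\rmout$ and using $\iota_c\circ\partial_\rmout=\partial_\rmout$ yields $\omega_n(\gamma(c;\underline d);(\gamma_i^j)_{i,j})=\omega_n(c;\omega_{m_1}(d_1;\underline{\gamma_1}),\dots,\omega_{m_n}(d_n;\underline{\gamma_n}))$, as required; the case $m_i=0$ is included.

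\textbf{Unit and equivariance.} For the unit, the unit axiom says $\partial_1$ and $\partial_\rmout$ are the same homeomorphism $S^1\to|\mathbf 1_\C|$ and the global marked point is $\partial_1(\ast)$, so the patching map for $\mathbf 1_\C$ and $\gamma$ is $\gamma\circ\partial_1^{-1}$ and $\omega_1(\mathbf 1_\C;\gamma)=\gamma\circ\partial_1^{-1}\circ\partial_\rmout=\gamma$. For equivariance, the symmetry axiom gives $\sigma^\ast\circ\partial_i=\partial_{\sigma^{-1}i}$, $\sigma^\ast\circ\partial_\rmout=\partial_\rmout$, and $\sigma^\ast$ fixes the global marked point; if $\alpha$ is the patching map for $(c;\gamma_1,\dots,\gamma_n)$ then $\alpha\circ(\sigma^\ast)^{-1}$ is the patching map for $(c\sigma;\gamma_{\sigma 1},\dots,\gamma_{\sigma n})$, and composing with $\partial_\rmout$ gives $\omega_n(c\sigma;\gamma_{\sigma 1},\dots,\gamma_{\sigma n})=\omega_n(c;\gamma_1,\dots,\gamma_n)$, which is the equivariance relation.

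\textbf{Continuity — the main obstacle.} It remains to prove that each $\omega_n\colon\C(n)\times(\Omega G)^n\to\Omega G$ is continuous. Let $E$ be the pullback of $\rho_n\colon\R\C(n)\to\C(n)$ along the projection $\C(n)\times(\Omega G)^n\to\C(n)$, so that the patching maps assemble into a single function $A\colon E\to G$ sending $(c,\underline\gamma,y)$ to $\alpha_{c,\underline\gamma}(y)$. Since $\omega_n(c,\underline\gamma)(t)=A(c,\underline\gamma,\partial_\rmout(t))$ and $\partial_\rmout$ is continuous, the adjoint of $\omega_n$ factors through $A$ and continuous maps, so $\omega_n$ is continuous provided $A$ is. To see that $A$ is continuous I would argue over the cells $D_{\underline X}$ of $\calF(n)$ from Note~\ref{CellDecompositionNote} (together with the $\Mon(I,\partial I)$- and $(S^1)^n$-directions of $\C(n)$): over such a cell the combinatorial type of the cactus—which lobes meet, and at which parameter values—is constant, so the inductive, lobe-by-lobe construction of $\alpha_{c,\underline\gamma}$ becomes an explicit finite composite of left translations whose parameters depend continuously on the lobe coordinates (continuous in $c$ by Proposition~\ref{CactiProposition}), on the remaining coordinates of $c$, and on $\underline\gamma$. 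The local descriptions obtained over different cells agree on overlaps by the uniqueness in Lemma~\ref{PatchingLemma}, so they glue to a continuous $A$. This is the only non-formal step, and it runs exactly parallel to the (deferred) continuity statement for the $\fD$-algebra structure $\nu$ on $\Omega^2 BG$.
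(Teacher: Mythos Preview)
Your proof is correct and follows essentially the same approach as the paper: unit and equivariance from the realization-system axioms, associativity via the pasting square and the uniqueness part of Lemma~\ref{PatchingLemma} (your ``strengthened lemma'' is exactly what the paper uses implicitly when it writes ``$\alpha'$ in fact satisfies the properties that characterize $\alpha$''), and continuity via the cell decomposition of $\calF(n)$.

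The one noteworthy difference is in the continuity step. The paper inserts a clean reduction you do not mention: since $\bigsqcup\partial_i\colon\bigsqcup S^1\times\C(n)\to\R\C(n)$ is a proper fibrewise surjection between fibrewise compact Hausdorff spaces, continuity of your $A$ is equivalent (by Proposition~\ref{ContinuityProposition}) to continuity of each $A\circ\partial_i$, and these composites have the simple form $(t,c,\underline\gamma)\mapsto g_i\cdot\gamma_i(t)$. This converts the problem from one on the varying total space $E$ to one on the product $\bigsqcup S^1\times\C(n)\times(\Omega G)^n$, after which only the continuity of $(c,\underline\gamma)\mapsto g_i$ remains and the cell decomposition handles it. Your direct cellwise argument on $E$ works too, but the paper's route is cleaner and sidesteps having to describe $\R\C(n)$ over each cell. (Also, your aside that this ``runs exactly parallel'' to the $\fD$-case overstates things: continuity for $\nu$ on $\Omega^2BG$ is elementary and needs no cell decomposition.)
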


The proof will be given in \S\ref{AlgebraProofsSubsection}.
The action $\omega$ encodes some natural properties of $\Omega G$.
For example, given $s\in S^1$, let $c_s\in\C(1)$ denote the cactus in which $\bullet=\partial_1(s)$ and $\partial_\rmout$ has constant speed.
Then
\[\omega_1(c_-,-)\colon S^1\times\Omega G\to\Omega G\]
is the circle action that sends $(s,\gamma)$ to the loop
$t\mapsto \gamma(s)^{-1}\cdot\gamma(s+t)$.
This circle action was considered by Menichi in \cite{\MenichiBVMorphism} and by Salvatore in \cite{\Salvatore}. 
Now let $c_P\in\C(2)$ denote the cactus in which two circles of radius $1/2$ are joined at their basepoint, and the global marked point lies on the first circle at its basepoint:
Then the map
\[\omega_2(c_P;-,-)\colon\Omega G\times\Omega G\to\Omega G\]
is the ordinary Pontrjagin product given by concatenating loops.
More generally, there is a natural inclusion of the little intervals operad into $\C$, under which the algebra structure $\omega$ pulls back to the standard $E_1$-algebra structure on $\Omega G$.

\subsection{The $\calE$-algebra $\varepsilon G$}\label{varepsilonGSubsection}

Now we will define the space $\varepsilon G$.
We will give it the structure of an $\calE$-algebra, and show that it is weakly equivalent to both $\Omega^2 BG$ and $\Omega G$ as $\calE$-algebras.
The algebra structure on $\varepsilon G$ is an almost literal mixture of the existing algebra structures on $\Omega^2 BG$ and $\Omega G$.

Let $EG\to BG$ be the universal principal $G$ bundle.
Fix a basepoint $\ast$ of $EG$ and use it to define a basepoint of $BG$ and an inclusion $\iota_G\colon G\hookrightarrow EG$, $g\mapsto g \cdot\ast$.

\begin{definition}\label{omegaGDefinition}
Let $\varepsilon G$ be the space of maps $\phi\colon D^2\to EG$ for which $\phi|_{S^1}$ factors through $\iota_G$, and for which $\phi(\ast)=\iota_G(e)$.
There are projections
\[p_1\colon \varepsilon G\to\Omega^2 BG,\qquad p_2\colon \varepsilon G\to\Omega G,\]
where $p_1$ is obtained by projecting from $EG$ to $BG$, and $p_2$ is defined by $\iota_G\circ p_2(\phi)=\phi|_{S^1}$.
\end{definition}

We will again make use of the pushout diagram
\begin{equation}\label{RealizationDiscGluingDiagram}
\xymatrix{
\bigsqcup_{i=1}^n S^1\ar@{^{(}->}[r]\ar[d]_{\bigsqcup\partial_i} & \bigsqcup_{i=1}^n D^2\ar[d]^a \\
|a|\ar@{^{(}->}[r] & D^2
}
\end{equation}
determined by an element $a\in\fD(n)$.

\begin{lemma}\label{epsilonPatchingLemma}
Given $(a,c,f)\in\calE(n)$ and $\phi_1,\ldots,\phi_n\in\varepsilon G$,
there is a unique map $\phi\colon D^2\to EG$ with the following properties:
\begin{enumerate}
\item
Its restriction to $|a|$ factors through $f\colon |a|\to|c|$.
\item
Its restriction to the $i$-th cofactor of $\bigsqcup_{i=1}^n D^2$ is a left translate of $\phi_i$.
\item
Its value on $\ast\in D^2$ is $\iota_G(e)$.
\end{enumerate}
Note that $\psi\in\varepsilon G$.
\end{lemma}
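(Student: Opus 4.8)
The plan is to construct $\phi$ by a patching argument, using the pushout square \eqref{RealizationDiscGluingDiagram} together with the map $f\colon|a|\to|c|$ and the translation structure coming from $\varepsilon G$, in close analogy with the proofs of Lemma~\ref{nuPatchingLemma} and Lemma~\ref{PatchingLemma}. First I would record what a map $\phi\colon D^2\to EG$ satisfying the three conditions must look like: by condition~(1) it factors on $|a|$ through $f$, so it is determined there by a map $\bar\phi\colon|c|\to EG$ with $\phi|_{|a|}=\bar\phi\circ f$; by condition~(2), on the $i$-th little disc it equals $g_i\cdot\phi_i$ for some $g_i\in G$. For these two prescriptions to agree along the $i$-th incoming boundary circle we need, for all $y\in S^1$, the identity $\bar\phi\bigl(\partial_i(y)\bigr)=\bar\phi\bigl(f(\partial_i(y))\bigr)=g_i\cdot\phi_i(y)$, where I have used the boundary condition $f\circ\partial_i=\partial_i$ for $(a,c,f)\in\calE(n)$. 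Note that $\phi_i|_{S^1}$ factors through $\iota_G$, say $\phi_i|_{S^1}=\iota_G\circ\gamma_i$ with $\gamma_i=p_2(\phi_i)\in\Omega G$, so this forces $\bar\phi\circ\partial_i\colon S^1\to EG$ to be the left translate $g_i\cdot(\iota_G\circ\gamma_i)=\iota_G\circ(L_{g_i}\circ\gamma_i)$, in particular to land in $\iota_G(G)$.

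The key step is then to produce $\bar\phi\colon|c|\to EG$. I claim one should first build a map $\alpha\colon|c|\to G$ and set $\bar\phi=\iota_G\circ\alpha$; the boundary circle $\partial_\rmout$ of the big disc maps into $|a|$ and hence by condition~(1) and the relation $f\circ\partial_\rmout=\partial_\rmout$ the restriction $\phi|_{S^1}=\bar\phi\circ\partial_\rmout=\iota_G\circ(\alpha\circ\partial_\rmout)$ automatically factors through $\iota_G$, which is one of the defining conditions for $\phi$ to matter as an element related to $\varepsilon G$. To get $\alpha$, apply Lemma~\ref{PatchingLemma} to the cactus $c$ and the loops $\gamma_1,\dots,\gamma_n\in\Omega G$: this yields a unique $\alpha\colon|c|\to G$ with $\alpha(\bullet)=e$ and with each $\alpha\circ\partial_i$ a left translate of $\gamma_i$. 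Pulling back along $f$ gives $\alpha\circ f\colon|a|\to G$, and on the $i$-th incoming boundary $(\alpha\circ f)\circ\partial_i=\alpha\circ\partial_i=L_{h_i}\circ\gamma_i$ for a well-defined $h_i\in G$ (well-defined because a left translate of $\gamma_i$ determines the translating element, as $\gamma_i(\text{basepoint})=e$). Setting $g_i:=h_i$, the two prescriptions for $\phi$ on the $i$-th boundary circle agree, so by the pushout property of \eqref{RealizationDiscGluingDiagram} there is a unique continuous $\phi\colon D^2\to EG$ with $\phi|_{|a|}=\iota_G\circ\alpha\circ f$ and $\phi|_{\text{$i$-th disc}}=g_i\cdot\phi_i$. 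Condition~(1) holds by construction; condition~(2) holds by construction. For condition~(3), the global marked point $\ast\in D^2$ lies on $\partial_\rmout(S^1)\subset|a|$ and $f(\ast)=\bullet$ (here one uses that $f$ respects the outgoing boundary, so it carries the global marked point of $|a|$ to that of $|c|$), whence $\phi(\ast)=\iota_G(\alpha(\bullet))=\iota_G(e)$. Finally $\phi\in\varepsilon G$ because $\phi|_{S^1}=\iota_G\circ(\alpha\circ\partial_\rmout)$ factors through $\iota_G$ and $\phi(\ast)=\iota_G(e)$; this gives the parenthetical remark.

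Uniqueness of $\phi$ follows from retracing the forced structure above: conditions~(1) and~(2) determine $\phi$ on $|a|$ and on each little disc, which jointly cover $D^2$, and the overlaps force the translating elements $g_i$ to equal the $h_i$ produced by Lemma~\ref{PatchingLemma}, provided one knows the map $\alpha$ on $|c|$ is itself unique — which is exactly the uniqueness clause of Lemma~\ref{PatchingLemma}, after observing that condition~(1) forces $\phi|_{|a|}$ to be of the form $\iota_G\circ\beta\circ f$ for some $\beta\colon|c|\to G$ with $\beta(\bullet)=e$ and $\beta\circ\partial_i$ a left translate of $\gamma_i$. I expect the main obstacle to be the bookkeeping at the marked points: one must check carefully that $f$, as an element of $\calE(n)$, sends the global marked point of $|a|$ to the global marked point of $|c|$ and the local marked point of the $i$-th little disc to the local marked point of the $i$-th lobe — these are consequences of the boundary conditions $f\circ\partial_\rmout=\partial_\rmout$ and $f\circ\partial_i=\partial_i$ combined with the fact that the boundary maps are based — and that consequently the translating elements $g_i$ are unambiguous and the normalization $\alpha(\bullet)=e$ propagates to $\phi(\ast)=\iota_G(e)$. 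Everything else is a routine patching argument of exactly the type already carried out twice in this section.
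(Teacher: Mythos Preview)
Your proposal is correct and follows essentially the same approach as the paper's proof: reduce to Lemma~\ref{PatchingLemma} applied to $c$ and the loops $p_2(\phi_i)$, set $\phi|_{|a|}=\iota_G\circ\alpha\circ f$, and then extend uniquely over the little discs using the pushout square. The paper compresses all of this into two sentences, while you have spelled out the bookkeeping in full; the one point you might make more explicit in the uniqueness argument is that $f$ is surjective onto $|c|$ (since $f\circ\partial_\rmout=\partial_\rmout$ and $\partial_\rmout\colon S^1\to|c|$ is onto), so the factoring map $\bar\phi$ is determined by $\phi|_{|a|}$, and that the $\partial_i$ jointly cover $|c|$, so $\bar\phi$ is forced to land in $\iota_G(G)$.
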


\begin{proof}
The conditions mean that $\psi|_{|a|}$ must factor as $\iota_G\circ\alpha\circ f$,
where $\alpha$ is the map obtained by applying Lemma~\ref{PatchingLemma} to $c$ and $p_2(\phi_1),\ldots,p_2(\phi_n)$.
There is a unique extension of this map to $D^2$ satisfying the second condition.
\end{proof}

\begin{definition}\label{omegaGAlgebraDefinition}
Define $\varepsilon_n\colon\calE(n)\times(\varepsilon G)^n\to\varepsilon G$ by
\[\varepsilon_n\left((a,c,f),(\phi_1,\ldots,\phi_n)\right)=\psi,\]
where $\psi$ is the map of Lemma~\ref{epsilonPatchingLemma}.
\end{definition}

The following two propositions together prove Theorem~B.
Their proofs are given in the next subsection.

\begin{proposition}\label{varepsilonGIsAnAlgebraProposition}
The collection $\varepsilon=\{\varepsilon_n\}$ makes $\varepsilon G$ into a $\calE$-algebra.
\end{proposition}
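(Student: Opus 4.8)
The plan is to verify the three operad-algebra axioms for $\varepsilon=\{\varepsilon_n\}$ --- unit, equivariance, and associativity --- by reducing each one to the corresponding statement already established for the realization systems $\R\fD$ and $\R\C$, together with the uniqueness clause of Lemma~\ref{epsilonPatchingLemma}. The recurring technique is the same one used in the proof of Theorem~\ref{MappingOperadIsAnOperadTheorem}: a map out of a pushout is determined by its restrictions to the pieces, so to prove two maps $D^2\to EG$ agree it suffices to check they agree on $|a|$ (or on its image in the relevant big disc) and on each little disc $\bigsqcup D^2$. Continuity of the $\varepsilon_n$ I would handle separately at the end, by the same fibrewise-mapping-space argument as in Theorem~\ref{MappingOperadIsAnOperadTheorem}, noting that $\psi|_{|a|}=\iota_G\circ\alpha\circ f$ depends continuously on $(a,c,f)$ and the $p_2(\phi_i)$ via Lemma~\ref{PatchingLemma}, and the extension over the little discs is continuous in the $\phi_i$.

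For the \emph{unit} axiom one takes $(a,c,f)=\mathbf{1}_\calE=(\mathbf 1_\fD,\mathbf 1_\C,\mathrm{id})$; then $|a|=S^1=|c|$, $f$ is the identity, and the map $\psi$ of Lemma~\ref{epsilonPatchingLemma} applied to a single $\phi\in\varepsilon G$ has $\psi|_{S^1}$ factoring through $\iota_G\circ\alpha$ where $\alpha=p_2(\phi)\colon S^1\to G$, and $\psi$ on the single big disc $D^2$ is a left translate of $\phi$ --- forced by condition (3) to be $\phi$ itself. Hence $\varepsilon_1(\mathbf 1_\calE,\phi)=\phi$. For \emph{equivariance}, given $(a,c,f)\in\calE(n)$, $\sigma\in\Sigma_n$, $\tau_i\in\Sigma_{m_i}$ and $\phi_i\in\varepsilon G$, both sides of the equivariance identity are maps $D^2\to EG$; using the symmetry axioms for $\R\fD$ and $\R\C$ (which say precisely that the symmetry maps are compatible with boundary maps and with the pasting squares, and hence with $\gamma(f;g_1,\dots,g_n)$ as defined in Definition~\ref{MappingOperadCompositionDefinition}) one checks the two maps have the same restriction to $|a\sigma|$ and to each relabelled little disc, so they agree by uniqueness.

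The \emph{associativity} axiom is the main step, and really the only one with any content. Fix $(x,u,f)\in\calE(n)$, $(y_i,v_i,g_i)\in\calE(m_i)$, $(z^j_i,w^j_i,h^j_i)\in\calE(l^j_i)$, and elements $\phi,\ \phi_i,\ \phi^j_i\in\varepsilon G$ (with appropriate indexing of the inputs). Both iterated composites $\varepsilon(\varepsilon(\cdot);\cdot)$ and $\varepsilon(\cdot;\varepsilon(\cdot))$ produce a map $D^2\to EG$, and I claim each is the unique map characterized by: its restriction to the realization $|\Gamma|$ of the common iterated composite $\Gamma=\gamma(\gamma(x;\underline y);\underline z)=\gamma(x;\underline{\gamma(y_i;\underline z_i)})$ factors as $\iota_G\circ(\text{Lemma~\ref{PatchingLemma} map for }B)\circ\gamma(\gamma(f;\underline g);\underline h)$, where $B=|\gamma(\gamma(u;\underline v);\underline w)|$ and we use Proposition~\ref{OmegaGIsAnAlgebraProposition} (associativity of $\omega$) to identify the Lemma~\ref{PatchingLemma} maps built in two stages with a single one; its restriction to each innermost little disc is a left translate of the corresponding $\phi$, $\phi_i$ or $\phi^j_i$; and its value at $\ast$ is $\iota_G(e)$. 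To see that both composites satisfy this I would invoke: (a) the associativity-of-pasting axiom (Axiom~\ref{AssociativityAxiom}) for $\R\fD$, which identifies the two ways of building $|\Gamma|$ from the pieces and hence the two disc-gluing pushouts \eqref{RealizationDiscGluingDiagram} iterated; (b) the fact, established inside the proof of Theorem~\ref{MappingOperadIsAnOperadTheorem}, that $\gamma(\gamma(f;\underline g);\underline h)=\gamma(f;\underline{\gamma(g_i;\underline h_i)})$ as maps $|\Gamma|\to B$; and (c) the already-known associativity of $\omega$ on $\Omega G$, which handles the bookkeeping of the iterated $\alpha$'s. The main obstacle is purely organizational: setting up notation for the triply-indexed inputs and carefully matching the nested pushout decompositions of $D^2$ with the nested pasting squares, so that ``restrict and apply uniqueness'' can be invoked cleanly; there is no genuine difficulty once the Theorem~\ref{MappingOperadIsAnOperadTheorem} and Proposition~\ref{OmegaGIsAnAlgebraProposition} facts are in hand, since $\varepsilon_n$ is by design the ``mixture'' of $\nu_n$ and $\omega_n$ glued along $f$.
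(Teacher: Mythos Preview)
Your overall strategy---verify unit, equivariance, and associativity by restricting to the pushout pieces and invoking the uniqueness clause of Lemma~\ref{epsilonPatchingLemma}, then handle continuity separately via fibrewise mapping-space arguments---is exactly the paper's approach, and your treatment of the unit and of continuity is consistent with what the paper does.

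However, your setup for associativity is confused: you have written down \emph{three} levels of operad elements $(x,u,f)\in\calE(n)$, $(y_i,v_i,g_i)\in\calE(m_i)$, $(z^j_i,w^j_i,h^j_i)\in\calE(l^j_i)$, together with algebra inputs $\phi,\phi_i,\phi^j_i$. That is the data for \emph{operad} associativity (as in Theorem~\ref{MappingOperadIsAnOperadTheorem}), not \emph{algebra} associativity. For the latter one needs only two levels of operad elements and a single family of algebra inputs: the paper takes $(a,c,f)\in\calE(n)$, $(b_i,d_i,g_i)\in\calE(m_i)$, writes $(A,C,F)$ for their composite in $\calE$, takes $\phi^i_j\in\varepsilon G$, and compares
\[
\varepsilon_{m_1+\cdots+m_n}\bigl((A,C,F);\underline\phi\bigr)
\quad\text{with}\quad
\varepsilon_n\bigl((a,c,f);\underline{\varepsilon_{m_i}((b_i,d_i,g_i);\underline{\phi^i})}\bigr).
\]
Your expression ``$\varepsilon(\varepsilon(\cdot);\cdot)$'' does not type-check, since the first slot of $\varepsilon$ takes an element of $\calE$, not of $\varepsilon G$.

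Once the setup is corrected, the argument you describe goes through and coincides with the paper's: one builds the iterated pushout decomposition of $D^2$ from the squares~\eqref{RealizationDiscGluingDiagram} for $a$, for the $b_i$, and the pasting square for $A$; one characterizes the right-hand side by four properties (factoring through $f$ on $|a|$, through $g_i$ on each $|b_i|$, left-translate of $\phi^i_j$ on each innermost disc, and value $\iota_G(e)$ at $\ast$); and then one observes that the first two of these force the restriction to $|A|$ to factor through $F$, which together with the last two is precisely the characterization of the left-hand side. So your error is organizational rather than conceptual, but as written the associativity paragraph is proving the wrong statement.
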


\begin{proposition}\label{varepsilonGIsEquivalentProposition}
The projection maps $p_1\colon\varepsilon G\to\Omega^2 BG$, $p_2\colon \varepsilon G\to\Omega G$ are homotopy equivalences of $\calE$ algebras.
\end{proposition}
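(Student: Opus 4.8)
\textbf{Proof proposal for Proposition~\ref{varepsilonGIsEquivalentProposition}.}

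The plan is to show that each of $p_1$ and $p_2$ is a homotopy equivalence of spaces, and then to note that each is a map of $\calE$-algebras, so that it is automatically a homotopy equivalence of $\calE$-algebras in the weak sense required (or, if one wants genuine $\calE$-algebra homotopy equivalences, one checks that the homotopy inverse and the homotopies can be taken $\calE$-linearly; but in the present setting it suffices to observe that a map of algebras that is a homotopy equivalence on underlying spaces is a weak equivalence of algebras). First I would treat $p_2\colon\varepsilon G\to\Omega G$. Recall $\varepsilon G$ is the space of maps $\phi\colon D^2\to EG$ with $\phi|_{S^1}=\iota_G\circ\beta$ for some $\beta\colon S^1\to G$ with $\beta(\ast)=e$, and with $\phi(\ast)=\iota_G(e)$; and $p_2(\phi)=\beta$. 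The fibre of $p_2$ over a fixed loop $\beta\in\Omega G$ is the space of extensions of the fixed boundary map $\iota_G\circ\beta\colon S^1\to EG$ to a map $D^2\to EG$ based at $\ast$. Since $EG$ is contractible (and $S^1\hookrightarrow D^2$ is a cofibration, with the basepoint condition handled by a based version of the same fact), this fibre is contractible; moreover $p_2$ is a fibration, being a restriction-of-maps map $\Map(D^2,EG)\to\Map(S^1,EG)$ pulled back along $\Omega G\hookrightarrow\Map(S^1,EG)$, composed with the identification of the target. Hence $p_2$ is a fibration with contractible fibres, so a homotopy equivalence. (Here one uses that all spaces are nice enough — e.g. $EG$, $BG$ have the homotopy type of CW complexes, $G$ has a nondegenerate basepoint — so that the relevant mapping spaces behave well.)

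Next I would treat $p_1\colon\varepsilon G\to\Omega^2 BG$. Write $q\colon EG\to BG$ for the bundle projection. Composition with $q$ gives $\Map(D^2,EG)\to\Map(D^2,BG)$, and since $q$ is a fibration this is a fibration; restricting to the appropriate based subspaces with the boundary constraints, $p_1$ is again a fibration. Its fibre over a fixed $\xi\in\Omega^2 BG$ consists of lifts $\phi\colon D^2\to EG$ of $\xi$ through $q$ with $\phi(\ast)=\iota_G(e)$ and $\phi|_{S^1}$ landing in the orbit $\iota_G(G)$. A lift of $\xi$ through $q$ exists and is essentially unique because $D^2$ is contractible: lifts of a map out of a contractible space through a principal $G$-bundle form a torsor under $\Map(D^2,G)$, and the based condition $\phi(\ast)=\iota_G(e)$ together with $D^2$ contractible pins down the lift up to a contractible choice; the extra condition that $\phi|_{S^1}$ lie in the orbit of $\ast$ is automatic since any lift of $\xi|_{S^1}$ (which, as $\xi\in\Omega^2BG$, is the constant map at the basepoint of $BG$) that is based must send $S^1$ into the orbit $q^{-1}(\mathrm{pt})=\iota_G(G)$. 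So the fibre of $p_1$ is contractible and $p_1$ is a homotopy equivalence.

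The only remaining point is the compatibility clause $h\circ p_2\simeq p_1$, and that $p_1,p_2$ are $\calE$-linear; the latter follows directly from Definition~\ref{omegaGAlgebraDefinition}, Definition~\ref{nuPatchingLemma} and Definition~\ref{PatchingLemma}, since the map $\psi$ produced by Lemma~\ref{epsilonPatchingLemma} projects under $q$ to precisely the map $\xi$ of Lemma~\ref{nuPatchingLemma} applied to $\nu_1(a;\dots)$ arguments matching $p_1(\phi_i)$ (both are characterised by being constant, resp.\ factoring through $f$, on $|a|$ and agreeing with the inputs on the little discs), and restricts to $S^1$ to the loop produced by Lemma~\ref{PatchingLemma} applied to the $p_2(\phi_i)$. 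Thus $p_1$ and $p_2$ intertwine the algebra structures, and being homotopy equivalences of underlying spaces they are weak equivalences of $\calE$-algebras. The main obstacle I anticipate is purely technical: making rigorous that the various restriction maps between mapping spaces (with the basepoint and boundary-orbit constraints) really are fibrations, and that the contractibility-of-fibre arguments go through with the chosen point-set conventions — this is where one must invoke that $S^1\hookrightarrow D^2$ is a cofibration, that $q$ is a Hurewicz fibration, and that $EG$ is contractible, all in based/constrained form. None of it is deep, but it needs to be set up carefully.
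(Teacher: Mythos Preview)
Your approach is essentially the paper's: show that $p_1$ and $p_2$ are fibrations with contractible fibres, and note separately that they are $\calE$-algebra maps. The only real difference is in verifying that $p_1$ is a fibration: you invoke the general fact that $q_*\colon\Map_*(D^2,EG)\to\Map_*(D^2,BG)$ is a fibration and then pull back, whereas the paper solves the based lifting problem by hand---lift arbitrarily to $F'$, then right-multiply by $f(x,s)^{-1}$ (where $F'|_{X\times\ast\times I}=\iota_G\circ f$) to force the basepoint condition. Your route is fine once one checks that $\varepsilon G$ really is the pullback of $\Omega^2 BG$ along $q_*$ on \emph{based} mapping spaces (it is, since $\phi|_{S^1}\subset q^{-1}(\ast)=\iota_G(G)$ is exactly the condition $q\circ\phi\in\Omega^2 BG$), but the paper's group-multiplication trick is more self-contained. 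Your identification of the fibre of $p_1$ is also correct but could be stated more crisply: it is precisely $\Map_*(D^2,G)$, hence contractible.
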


\subsection{Proof of Propositions \ref{OmegaGIsAnAlgebraProposition}, \ref{varepsilonGIsAnAlgebraProposition} and \ref{varepsilonGIsEquivalentProposition}}\label{AlgebraProofsSubsection}

In this subsection we resume the practice of using underlines to indicate tuples of elements.
For example, given $\gamma^i_1, \ldots,\gamma^i_{m_i}$ for each $i=1,\ldots,n$, we write $\underline\gamma$ for $\gamma^1_1,\ldots,\gamma^n_{m_n}$ and we write $\underline\gamma^i$ for $\gamma^i_1,\ldots,\gamma^i_{m_i}$.

\begin{proof}[Proof of Proposition~\ref{OmegaGIsAnAlgebraProposition}]
Axioms 1, 2 and 3 of Definition~\ref{RealizationsAxiomsDefinition} show that the unit element $\mathbf{1}_\C$ acts as the identity on $\Omega G$, and that 
the equivariance property $\omega_n(c\sigma;\gamma_1,\ldots,\gamma_n)=\omega_n(c;\gamma_{\sigma^{-1}1},\ldots,\gamma_{\sigma^{-1}(n)})$ holds.

Now let us show that $\omega$ is associative.
Let $c\in\C(n)$, $d_i\in\C(m_i)$ and $\gamma^i_1,\ldots,\gamma^i_{m_i}\in\Omega G$ for $i=1,\ldots,n$.
We must show that
\[
\omega_{m_1+\cdots+m_n}(\gamma(c;\underline d);\underline \gamma)
=
\omega_n(c;\underline{\omega_{m_i}(d_i;\underline{\gamma^i})}).
\]
We can use Lemma~\ref{PatchingLemma} to obtain three different maps.
\begin{enumerate}
\item
Let
$\alpha\colon |\gamma(c;d_1,\ldots,d_n)|\to G$
denote the map obtained using
$\gamma(c;d_1,\ldots,d_n)$ and $\gamma^1_1,\ldots,\gamma^n_{m_n}$.
\item
Let $\beta\colon |c|\to G$
denote the map obtained using
$c$ and the $\omega_{m_i}(d_i;\gamma^i_1,\ldots,\gamma^i_{m_i})$.
\item
Let $\gamma^i\colon |d_i|\to G$ denote the map obtained using 
$d_i$ and $\gamma^i_1,\ldots,\gamma^i_{m_i}$.
\end{enumerate}
We must show that $\alpha\circ\partial_\rmout=\beta\circ\partial_\rmout$.
We can find $g_1,\ldots,g_n\in G$ and two commutative squares:
\[\xymatrix@=35 pt{
\bigsqcup S^1\ar[r]^-{\bigsqcup\partial_\rmout}\ar[d]_{\bigsqcup\partial_i} & |d_1|\sqcup\cdots\sqcup|d_n|\ar[d] ^{\bigsqcup g_i\cdot\gamma^i} \\
|c|\ar[r]_-\beta & G
}
\qquad\qquad
\xymatrix@=35 pt{
\bigsqcup S^1\ar[r]^-{\bigsqcup\partial_\rmout}\ar[d]_{\bigsqcup\partial_i} & |d_1|\sqcup\cdots\sqcup|d_n|\ar[d] ^\mu \\
|c|\ar[r]_-\lambda & |\gamma(c;d_1,\ldots,d_n)|
}\]
The first square comes from the definition of $\beta$.
The second is a pasting square \eqref{PastingDiagram}, and in particular is a pushout.
Comparing the squares we see that $\beta$ factors as $\alpha'\circ\lambda$ for some $\alpha'\colon |\gamma(c;d_1,\ldots,d_n)|\to G$.
The compatibility of $\lambda$ and $\mu$ with the boundary maps means that $\alpha'$ in fact satisfies the properties that characterize $\alpha$, so that $\alpha'=\alpha$.
Thus $\beta=\alpha\circ\lambda$, and by the compatibility of $\lambda$ with the boundary maps we have $\beta\circ\partial_\rmout=\alpha\circ\lambda\circ\partial_\rmout=\alpha\circ\partial_\rmout$ as required.

Let us show that $\omega_n$ is continuous.
Lemma~\ref{PatchingLemma} gives us a function
\[\alpha\colon\R\C(n)\times(\Omega G)^n\to G\]
and it suffices to show that this is continuous.
The map $\bigsqcup\partial_i\colon \bigsqcup S^1\times\C(n)\to \R\C(n)$ is proper surjection of fibred spaces, so by Proposition~\ref{ContinuityProposition} it will suffice to show that each composite $\alpha\circ\partial_i$
is continuous.
This composite is given by $(t,c,\gamma_1,\ldots,\gamma_n)\mapsto g_i\cdot\gamma_i(t)$ for some $g_i\in G$.
It will therefore suffice to show that the assignments $\C(n)\times(\Omega G)^n\to G$, $(c,\gamma_1,\ldots,\gamma_n)\mapsto g_i$ are continuous.
These maps factor through a map $\calF(n)\times (\Omega G)^n\to G$, whose continuity can be proved using the cell decomposition of $\calF(n)$ given in Note~\ref{CellDecompositionNote}.
\end{proof}

\begin{proof}[Proof of Proposition~\ref{varepsilonGIsAnAlgebraProposition}]
To begin we must show that $\varepsilon$ satisfies the unit, equivariance and associativity rules.
We will only prove associativity as the other two properties can be proved in a similar but much simpler way.
Take $(a,c,f)\in\calE(n)$ and $(b_i,d_i,g_i)\in\calE(m_i)$ for $i=1,\ldots,n$, and write their composite as $(A,C,F)$.
Take $\phi_j^i\in\varepsilon G$ for $i=1,\ldots,n$ and $j=1,\ldots,m_i$.
We must show that
\begin{equation} \label{AssociativityRequirementEquation}
\varepsilon_{m_1+\cdots+m_n}\left((A,C,F);\underline\phi \right)
=
\varepsilon_n\left((a,c,f); \underline {\varepsilon_{m_i}\left((b_i,d_i,g_i);\underline{\phi^i}\right)}\right).
\end{equation}
Consider the following diagram of pushout squares.
\[\xymatrix{
{}  & \bigsqcup_i\bigsqcup_j S^1\ar[d]\ar[r] & \bigsqcup_i\bigsqcup_j D^2 \ar[d] \\
\bigsqcup_i S^1\ar[r]\ar[d] & \bigsqcup_i |b_i|\ar[r]\ar[d] &   \bigsqcup_i D^2 \ar[d]  \\
|a|\ar[r] & |A|\ar[r] & D^2
}\]
The upper square comes from \eqref{RealizationDiscGluingDiagram} for the $b_i$;
the right hand two squares compose to give \eqref{RealizationDiscGluingDiagram} for $A$, and the lower two squares compose to give \eqref{RealizationDiscGluingDiagram} for $a$; the left hand square is the pasting square for $A$.
Using this diagram we can characterise the right hand side of \eqref{AssociativityRequirementEquation} as the map $\psi\colon D^2\to EG$ that
\begin{enumerate}
\item
when restricted to  $|a|$, factors through $f\colon |a|\to |c|$;
\item
when restricted to $|b_i|$,  factors through $g_i\colon|b_i|\to|d_i|$;
\item
on the $(i,j)$-th cofactor $D^2$ is a left translate of $\phi_j^i$;
\item
sends $\ast$ to $\iota_G(e)$.
\end{enumerate}
The first two of these properties, together with the definition of $F$, show that the restriction of $\psi$ to $|A|$ factors through $F\colon |A|\to|C|$.
This property, together with with the third and fourth properties above, characterize the left hand side of \eqref{AssociativityRequirementEquation}.
Equality follows.
Thus $\varepsilon$ makes $\varepsilon G$ into a $\calE$-algebra in sets.

To show that $\varepsilon$ makes $\varepsilon G$ into a $\calE$-algebra in the topological setting
we will show that $\varepsilon_n\colon\calE(n)\times(\varepsilon G)^n\to\varepsilon G$ is continuous.
To do this we will show that the adjoint $D^2\times\calE(n)\times(\varepsilon G)^n\to EG$ is continuous.
Since the lower and right-hand maps of diagram  \eqref{RealizationDiscGluingDiagram} give inclusions of closed subsets
$\bigsqcup D^2\times\calE(n)\hookrightarrow D^2\times\calE(n)$ and $\pi_1^\ast\R\fD(n)\hookrightarrow D^2\times\calE(n)$, 
it suffices to show continuity of the restrictions
\[\xi\colon\bigsqcup D^2\times\calE(n)\times(\varepsilon G)^n\to EG,
\qquad
\eta\colon\pi_1^\ast\R\fD(n)\times(\varepsilon G)^n\to EG.\]
For $\xi$ this follows from continuity of the map $((a,c,f),(\phi_1,\ldots,\phi_n))\mapsto (g_1,\ldots,g_n)$ in the proof of Proposition~\ref{OmegaGIsAnAlgebraProposition}.
For $\eta$ it follows from continuity of two maps
\[\alpha\colon\R\C(n)\times(\Omega G)^n\to G,
\qquad
\beta\colon\pi_1^\ast\R\fD(n)\to\R\C(n).\]
Here $\beta$ is given in the fibre over $(a,c,f)$ by $x\mapsto f(x)$,
and is continuous by part~5 of Proposition~\ref{FibrewiseMappingProposition},
while $\alpha$ was defined, and shown to be continuous, in the proof of Proposition~\ref{OmegaGIsAnAlgebraProposition}.
\end{proof}

\begin{proof}[Proof of Proposition~\ref{varepsilonGIsEquivalentProposition}]
It is immediate that both $p_1$ and $p_2$ are $\calE$-algebra morphisms.
We must now show that both $p_1$ and $p_2$ are homotopy equivalences.

We first show that $p_1$, $p_2$ are fibrations.
For $p_2$ the homotopy lifting problem is adjoint to an extension problem that can always be solved.
For $p_1$ the homotopy lifting problem is adjoint to the problem of finding a lift $F$ in a diagram of the form
\[
\xymatrix{
X\times D^2\times\{0\}\ar[r]^-{\widetilde h}\ar[d] &  EG\ar[d]\\
X\times D^2\times I\ar[r]_-{\widetilde H}\ar@{-->}[ur]_F&  BG
}\]
with the additional property that $F|(X\times \ast\times I)$ is constant with value $\iota_G(e)$.
To solve this, first choose an arbitrary lift $F'$.
Write $F'|X\times \ast\times I=\iota_G\circ f$, and then define $F$ by $F(x,d,s)=F'(x,d,s)\cdot f(x,s)^{-1}$.

It now suffices to show that $\pi_1$ and $\pi_2$ have contractible fibres.
But $p_1^{-1}(x)$ is homeomorphic to the space of based maps $D^2\to G$, while $p_2^{-1}(\gamma)$ is the space of maps $D^2\to EG$ whose restriction to $S^1$ is $\iota_G\circ\gamma$.
These are contractible, by contractibility of $D^2$ and $EG$ respectively.
\end{proof}

\section{Proof of Theorem~A}\label{ProofOfEquivalenceSection}

This section gives the proof of Theorem~A, stating certain results that will be proved in the remaining sections of the paper.  The theorem states that the projections $\pi_1\colon\calE\to\fD$ and $\pi_2\colon\calE\to\C$ are weak homotopy equivalences of operads.
This means that for each $n\geqslant 0$, the maps $\pi_1\colon\calE(n)\to\fD(n)$ and $\pi_2\colon\calE(n)\to\C(n)$ are weak equivalences of spaces.
To show this we will consider the product
\[\Pi\colon \calE(n)\to\fD(n)\times\C(n)\]
of $\pi_1$ and $\pi_2$.
It would be ideal for us if $\Pi$ were a fibration, but this is not the case: a typical fibre is nonempty, but the fibre over a pair $(a,c)$ can be empty if the little discs of $a$ meet the boundary of the big disc, for then the boundary conditions can be overdetermined.
This issue is easily remedied.

\begin{definition}
Let $\fD^\circ(n)\subset\fD(n)$ consist of those elements whose little discs do not meet the boundary of the big disc, and let $\calE^\circ(n) = \pi_1^{-1}\fD^\circ(n)\subset\calE(n)$.
\end{definition}

\begin{lemma}
The inclusions $\fD^\circ(n)\hookrightarrow\fD(n)$ and $\calE^\circ(n)\hookrightarrow\calE(n)$ are homotopy equivalences.

\end{lemma}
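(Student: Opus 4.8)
The plan is to exhibit an explicit deformation retraction of $\fD(n)$ onto $\fD^\circ(n)$ and to lift it to $\calE(n)$. The key observation is that $\fD^\circ(n)$ is obtained from $\fD(n)$ by a uniform ``shrinking toward the center'' process: given a scalar $\lambda\in(0,1]$, composing any $a\in\fD(n)$ with the little disc $\lambda\cdot\mathbf 1\in\fD(1)$ (rescaling by $\lambda$ about the origin of $D^2$) produces an element $a\cdot\lambda$ whose little discs have been scaled down by the factor $\lambda$ and pulled toward the center, so they no longer touch $\partial D^2$ once $\lambda<1$. Concretely, first I would fix a continuous function $\mu\colon\fD(n)\to(0,1]$ with $\mu(a)=1$ exactly when $a\in\fD^\circ(n)$ — for instance $\mu(a)$ can be taken to depend on the maximal ``reach'' of the little discs of $a$ toward the boundary — and then define a homotopy $H\colon\fD(n)\times I\to\fD(n)$ by $H(a,t)=a$ composed with rescaling by $1-t(1-\mu(a))$ about the origin. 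This is a deformation retraction of $\fD(n)$ onto $\fD^\circ(n)$: at $t=0$ it is the identity, at $t=1$ it lands in $\fD^\circ(n)$, and it fixes $\fD^\circ(n)$ pointwise throughout (since $\mu\equiv 1$ there).

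Next I would lift this homotopy to $\calE(n)$. The rescaling of $D^2$ about the origin restricts to a homeomorphism from $|a|$ to a proper subspace of $|H(a,t)|$; more precisely, $|H(a,t)|$ is the region of $D^2$ between the (rescaled) image of $\partial D^2\setminus$ nothing and the little discs, and it deformation retracts onto the image of $|a|$ in the obvious radial way. Given $(a,c,f)\in\calE(n)$, I would build $\widetilde H((a,c,f),t)=(H(a,t),c,f_t)$ where $f_t\colon |H(a,t)|\to|c|$ is defined by first radially collapsing $|H(a,t)|$ onto the scaled copy of $|a|$, then identifying with $|a|$, then applying $f$. One must check that $f_t$ still satisfies the two boundary conditions $f_t\circ\partial_i=\partial_i$ and $f_t\circ\partial_\rmout=\partial_\rmout$: the incoming boundary condition is automatic since the $i$-th little disc of $H(a,t)$ is the rescaled image of the $i$-th little disc of $a$, and the outgoing boundary condition requires that the radial collapse send $\partial D^2$ (the boundary of the big disc of $H(a,t)$, after rescaling) correctly onto the image of $\partial_\rmout$ of $a$ — this is where the collapsing map must be chosen so that its restriction to $\partial D^2$ is the inclusion $S^1\hookrightarrow|H(a,t)|$ composed appropriately, and some care with the parametrization is needed. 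Since each $f_t$ is a composite of a homotopy equivalence with a deformation retraction, it remains a homotopy equivalence, so $\widetilde H$ stays inside $\calE(n)$. Continuity of $\widetilde H$ follows from the fibrewise-mapping-space description of $\calE(n)$ together with the continuity of $H$ and of the radial collapse maps in the parameters.

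The main obstacle I anticipate is the continuity and boundary-compatibility of the family $f_t$ near $t=0$ and at elements of $\fD^\circ(n)$, where $\mu(a)=1$ and the rescaling is trivial: one needs the collapse maps to degenerate continuously to the identity there. This is a matter of choosing the radial collapse $|H(a,t)|\to(\text{scaled }|a|)$ to be the ``straight-line toward the scaled little disc boundaries'' homotopy, normalized so that it is the identity whenever the scaling factor is $1$; then continuity in $(a,t)$ is inherited from continuity of $\mu$ and of the geometry of framed little discs. Once $\widetilde H$ is verified to be a continuous deformation retraction of $\calE(n)$ onto $\calE^\circ(n)$, together with $H$ it proves both claims simultaneously, and the two retractions are compatible with $\pi_1$ by construction. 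I do not expect any difficulty with the cacti coordinate $c$, which is untouched throughout.
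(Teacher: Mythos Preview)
Your overall strategy---shrink the configuration and lift via a radial collapse---is sound, but there is an obstruction to carrying it out as a \emph{deformation retraction}. The function $\mu\colon\fD(n)\to(0,1]$ you ask for cannot exist: $\mu^{-1}(1)$ is closed, whereas $\fD^\circ(n)$ is open and dense in $\fD(n)$, so the only closed set containing it is all of $\fD(n)$, forcing $\mu\equiv 1$. More generally, no strong deformation retraction of $\fD(n)$ onto $\fD^\circ(n)$ exists, for the same reason that $(0,1)$ is not a strong deformation retract of $(0,1]$.

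The fix is immediate: drop the requirement that $\fD^\circ(n)$ be fixed and take $\mu$ constant, say $\mu\equiv\tfrac12$. The map $a\mapsto a\cdot\tfrac12$ lands in $\fD^\circ(n)$, and the straight-line homotopy through $a\cdot(1-t/2)$ stays in $\fD^\circ(n)$ whenever $a$ does, so you obtain a two-sided homotopy inverse rather than a retraction. Your lift to $\calE(n)$ via the radial collapse of the outer annulus then goes through unchanged, and the boundary checks you outline are correct.

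The paper's argument is a close variant: instead of scaling the whole configuration toward the origin, it halves the radii of the little discs while keeping their centres fixed, writing $a_{1/2}$ for the result. The collapse map $\rho_a\colon|a_{1/2}|\to|a|$ is the identity on $|a|\subset|a_{1/2}|$ and radially projects each annulus between a halved disc and the original disc onto the boundary of the latter; the homotopy inverse on $\calE(n)$ is $(a,c,f)\mapsto(a_{1/2},c,f\circ\rho_a)$. This keeps the outgoing boundary condition trivially satisfied (since $\rho_a$ is the identity on $S^1\subset|a|$), at the cost of handling $n$ small annuli rather than one large one. Either shrinking works; neither yields a deformation retraction.
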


\begin{proof}
Given $a\in\fD^\circ(n)$, write $a_{1/2}$ for the element obtained by halving the radii of the little discs of $a$.
Write $\rho_a\colon |a_{1/2}|\to|a|$ for the map that preserves $|a|\subset|a_{1/2}|$ and that projects points inside a little disc of $a$ to the boundary of that little disc.
Then the maps $\fD(n)\to\fD^\circ(n)$, $a\mapsto a_{1/2}$ and $\calE(n)\to\calE^\circ(n)$, $(a,c,f)\mapsto(a_{1/2},c,f\circ\rho_a)$ are homotopy inverse to the inclusions above.
\end{proof}

To prove Theorem~A it will therefore suffice to prove that the restrictions
$\pi_1\colon\calE^\circ(n)\to\fD^\circ(n)$ and $\pi_2\colon\calE^\circ(n)\to\C(n)$ are weak equivalences.
To do this we now consider the map
\[\Pi\colon \calE^\circ(n)\to\fD^\circ(n)\times\C(n),\]
which has much better fibrewise properties than its predecessor.

\begin{theoremc}
Fix $(a,c)\in\fD^\circ(n)\times\C(n)$ and write  $\Map_\partial(|a|,|c|)$ for the fibre of $\calE^\circ(n)$ over $(a,c)$.
Then there is a neighbourhood $U$ of $(a,c)$ over which we can find a fibrewise homotopy equivalence
\[\calE^\circ(n)|U\simeq U\times\Map_\partial(|a|,|c|).\]
\end{theoremc}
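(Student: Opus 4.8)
The plan is to exhibit $\calE^\circ(n)|U$ as a fibrewise space of \emph{boundary-preserving maps} between two fibre bundles over $U$, and then to trivialize those bundles over a suitably small $U$. The key point is that the realization functors $\R\fD$ and $\R\C$ are, when restricted to $\fD^\circ(n)$ and $\C(n)$ respectively, \emph{fibre bundles} rather than merely fibrewise compact Hausdorff spaces. For $\R\fD$ this is clear: over $\fD^\circ(n)$, where the little discs stay in the interior of the big disc, the complement $|a|\subset D^2$ varies by an ambient isotopy of $D^2$, so $\R\fD(n)|\fD^\circ(n)$ is locally trivial with fibre a fixed model surface $|a_0|$; moreover one can choose the local trivialization to carry the boundary maps $\partial_1,\ldots,\partial_n,\partial_\rmout$ to the standard ones, since these are the boundary circles of the (isotoped) discs. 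For $\R\C$ the corresponding statement follows from the cell decomposition $\{D_{\underline X}\}$ of $\calF(n)$ recalled in Note~\ref{CellDecompositionNote}: over the interior of each cell the combinatorial type of the cactus is constant, so $|c|$ varies by a homeomorphism, and one checks — again using the lobe coordinates, which vary continuously — that this can be done compatibly with the incoming and outgoing boundary maps. Thus near any $(a,c)$ there is a neighbourhood $U=U_a\times U_c$ with homeomorphisms over $U$,
\[
\pi_1^\ast\R\fD(n)|U \;\cong\; U\times |a|,\qquad \pi_2^\ast\R\C(n)|U \;\cong\; U\times |c|,
\]
each carrying $\partial_i\mapsto\partial_i$ and $\partial_\rmout\mapsto\partial_\rmout$.

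Given these trivializations, the fibrewise mapping space $\Map_{U}(\pi_1^\ast\R\fD(n),\pi_2^\ast\R\C(n))|U$ becomes identified with $U\times\Map(|a|,|c|)$, and the boundary conditions cut out the closed subspace $U\times\Map_\partial(|a|,|c|)$, where $\Map_\partial$ denotes maps sending each $\partial_i$ and $\partial_\rmout$ to the fixed standard boundary maps. This uses the basic formal properties of fibrewise mapping spaces recalled in Section~\ref{MappingSpacesSection} (and the appendix): a trivialization of the source and target bundles induces a trivialization of the fibrewise mapping space, and closed fibrewise conditions pull back to closed fibrewise conditions. So far we have produced a \emph{homeomorphism} $\calM(n)|U \cong U\times\Map_\partial(|a|,|c|)$, restricting by definition to a homeomorphism on the locus of $(a,c,f)$ for which $f$ is a homotopy equivalence. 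But $\calE^\circ(n)|U$ is precisely that locus on the left, and the corresponding locus on the right is $U\times\Map_\partial^\simeq(|a|,|c|)$, the union of those path components of $U\times\Map_\partial(|a|,|c|)$ whose maps are homotopy equivalences — which is an open and closed subset. Hence we already get a homeomorphism $\calE^\circ(n)|U \cong U\times\Map_\partial^\simeq(|a|,|c|)$, which is certainly a fibrewise homotopy equivalence, and $\Map_\partial^\simeq(|a|,|c|)$ is by definition the fibre of $\calE^\circ(n)$ over $(a,c)$.

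I would organize the write-up in three steps: (1) prove the local triviality, with boundary-compatible trivializations, of $\R\fD(n)$ over $\fD^\circ(n)$ and of $\R\C(n)$ over $\C(n)$; (2) invoke the results of Section~\ref{MappingSpacesSection} to transport these to a trivialization of the fibrewise mapping space compatible with the boundary conditions; (3) observe that passing to the open-closed subspace where $f$ is a homotopy equivalence gives the desired fibrewise equivalence, and identify the resulting fibre with $\Map_\partial(|a|,|c|)$. The main obstacle is Step (1) for cacti: one must produce, near an arbitrary $c\in\C(n)$, a continuous family of homeomorphisms $|c'|\to|c|$ that respects \emph{all} the boundary maps simultaneously, and this requires care because the combinatorial type of a cactus jumps across the cell boundaries $D_{\underline X}$ while nearby lobes can degenerate in radius. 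The remedy is to build the trivialization cell by cell and glue, using that the lobe coordinates and the cell-coordinates (the lengths recorded in the simplices $\prod_j\Delta^{d(j)}$) depend continuously on the cactus; equivalently, one may simply enlarge the neighbourhood argument to the maximal cell $D_{\underline X}$ containing $c$ in its interior and then take $U_c$ inside it. Everything else is a routine application of fibrewise topology.
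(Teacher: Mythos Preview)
There is a genuine gap in Step~(1) for cacti, and it is not the technical obstacle you identify at the end but a topological obstruction: the realization bundle $\R\C(n)\to\C(n)$ is \emph{not} a fibre bundle, because the homeomorphism type of $|c|$ changes as $c$ crosses cell boundaries.  Take $n=3$ and let $c$ be the cactus in which all three lobes meet at a single point; then $|c|$ is a graph with one vertex of valence~$6$.  Any open neighbourhood of $c$ in $\C(3)$ contains cacti $c'$ in which lobe~$1$ meets lobe~$2$ at a point $p$ and lobe~$2$ meets lobe~$3$ at a distinct point $q$, and then $|c'|$ has two vertices of valence~$4$.  These graphs are homotopy equivalent but not homeomorphic, so no continuous family of homeomorphisms $|c'|\to|c|$ exists, let alone one respecting the boundary maps.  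Your proposed remedies do not escape this: gluing cell-by-cell trivializations cannot produce a homeomorphism across a stratum where the fibre type jumps, and ``taking $U_c$ inside the maximal cell containing $c$ in its interior'' fails because the interior of a non-top-dimensional cell is not open in $\C(n)$.

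This is exactly why the paper's proof takes a different route.  Section~\ref{RCFibrewiseSection} proves only that $(\R\C(n),\partial)$ is locally fibrewise \emph{homotopy equivalent} (in the sense of Definition~\ref{HomotopyEquivalenceDefinition}) to the trivial pair $(|c|\times\C(n),\partial\times\mathrm{Id})$, by collapsing a fibrewise-contractible tree in each fibre.  But homotopy equivalences do not induce maps of the strict mapping spaces $\Map_\partial$, so the paper introduces the homotopy-relative mapping space $\Map^h_\partial$ and the corresponding fibrewise space $\calE^\circ_h(n)$, proves that the inclusion $\calE^\circ(n)\hookrightarrow\calE^\circ_h(n)$ is a fibrewise homotopy equivalence (using that $\partial$ on the disc side is a fibrewise cofibration), and then uses Proposition~\ref{MappingSpacesFunctorialProposition} to transport the local homotopy trivializations of source and target to a local trivialization of $\calE^\circ_h(n)$.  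Your argument is correct for the $\fD$ side and would go through verbatim if $\R\C(n)$ were a genuine fibre bundle, but as it stands the cactus step needs the homotopy-relative machinery of Section~\ref{MappingSpacesSection}.
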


Thus each point of $\fD^\circ(n)\times\C(n)$ has arbitrarily small neighbourhoods over which $\Pi$ is a quasifibration.
By Corollary 2.4 of \cite{\MayQuasi}, such maps are themselves quasifibrations.

\begin{corollary}
The projection $\Pi\colon\calE^\circ(n)\to\fD^\circ(n)\times\C(n)$ is a quasifibration.
\end{corollary}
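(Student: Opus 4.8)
This corollary is an immediate consequence of Theorem~C together with the standard principle that a map which restricts to a quasifibration over arbitrarily small neighbourhoods of each point of its base is itself a quasifibration. The plan is therefore to deduce from Theorem~C that $\Pi$ has this local property, and then to quote Corollary~2.4 of \cite{\MayQuasi}.

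Fix $(a,c)\in\fD^\circ(n)\times\C(n)$, and let $U$ be a neighbourhood as provided by Theorem~C, so that there is a fibrewise homotopy equivalence over $U$ between $\calE^\circ(n)|U$ and $U\times\Map_\partial(|a|,|c|)$. For any open $V\subseteq U$ containing $(a,c)$, this equivalence restricts to a fibrewise homotopy equivalence over $V$ between $\calE^\circ(n)|V$ and $V\times\Map_\partial(|a|,|c|)$; hence every open subset of $U$ containing $(a,c)$ again satisfies the conclusion of Theorem~C, and in particular $(a,c)$ has arbitrarily small such neighbourhoods. Now the projection $V\times\Map_\partial(|a|,|c|)\to V$ is a fibration, and hence a quasifibration; since the property of being a quasifibration is invariant under fibrewise homotopy equivalence over the base, it follows that the restriction $\Pi\colon\calE^\circ(n)|V\to V$ is a quasifibration.

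We conclude that every point of $\fD^\circ(n)\times\C(n)$ admits arbitrarily small open neighbourhoods over which $\Pi$ restricts to a quasifibration; by Corollary~2.4 of \cite{\MayQuasi}, $\Pi$ is itself a quasifibration. The substance of this argument lies entirely in Theorem~C — here one uses only the elementary facts that a product projection $V\times F\to V$ is a quasifibration and that fibrewise homotopy equivalences preserve quasifibrations — so the real obstacle is the proof of Theorem~C, which is carried out in the subsequent sections.
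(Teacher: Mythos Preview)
Your proposal is correct and follows exactly the paper's approach: deduce from Theorem~C that $\Pi$ is a quasifibration over arbitrarily small neighbourhoods of each point, then invoke Corollary~2.4 of \cite{\MayQuasi}. You have simply made explicit the two easy observations the paper leaves implicit, namely that a fibrewise homotopy equivalence over $U$ restricts to one over any open $V\subseteq U$, and that fibrewise homotopy equivalence with a product implies quasifibration.
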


We will use the quasifibration $\Pi$ to show that $\pi_1$ and $\pi_2$ are weak equivalences.
Write $PRB_n$ for the pure ribbon braid group.
Then both $\fD^\circ(n)$ and $\C(n)$ are Eilenberg-MacLane spaces $K(PRB_n;1)$.
(The relevant background is recalled in \S\ref{PureRibbonBraidGroupSubsection} and \S\ref{DiscsCactiPureRibbonBraidGroupSubsection}.)
It will therefore suffice to show that $\pi_i(\calE^\circ(n))$ is trivial for $i\neq 1$, and that $\Pi_\ast\colon\pi_1(\calE^\circ(n))\to PRB_n\times PRB_n$ identifies $\pi_1(\calE^\circ(n))$ with the diagonal subgroup of $PRB_n\times PRB_n$.

Write $\mathcal{F}_n$ for a typical fibre of $\Pi$.
For $\Pi$ to be a quasifibration means that $\mathcal{F}_n$ is weakly equivalent to the homotopy fibre of $\Pi$, or in other words that there is a long exact sequence of homotopy groups:
\begin{equation}\label{LongExactSequenceI}\cdots\longrightarrow\pi_i(\mathcal{F}_n)\longrightarrow\pi_i(\calE^\circ(n))\longrightarrow\pi_i(\fD^\circ(n))\times\pi_i(\C(n))\longrightarrow\cdots\end{equation}
We will establish the criteria listed in the last paragraph by exploiting this long exact sequence.

\begin{proposition}\label{FibreHomotopyTypeProposition}
There is a homotopy equivalence $\mathcal{F}_n\to PRB_n$.
In particular $\mathcal{F}_n$ has contractible components.
\end{proposition}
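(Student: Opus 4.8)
The plan is to identify the fibre $\mathcal{F}_n = \Pi^{-1}(a,c)$ explicitly as a mapping space and then compute its homotopy type. By Theorem~C (or directly from the definitions), $\mathcal{F}_n$ is the space $\Map_\partial(|a|,|c|)$ of homotopy equivalences $f\colon|a|\to|c|$ satisfying the boundary conditions $f\circ\partial_i=\partial_i$ for $i=1,\ldots,n$ and $f\circ\partial_\rmout=\partial_\rmout$. First I would record the homotopy types of the two realizations. For $a\in\fD^\circ(n)$, the realization $|a|$ is $D^2$ with $n$ open discs removed, hence homotopy equivalent to a wedge of $n$ circles $\bigvee_{i=1}^n S^1$; the $n+1$ boundary maps $\partial_1,\ldots,\partial_n,\partial_\rmout\colon S^1\to|a|$ represent the $n$ wedge generators and their product (in some order determined by $a$), respectively. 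Similarly $|c| = c(S^1)$ is a treelike configuration of $n$ circles, again homotopy equivalent to $\bigvee_{i=1}^n S^1$, with the analogous description of its boundary maps.

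Next I would set up the comparison. Since $|a|$ is obtained from $\bigsqcup_{i=1}^n S^1$ by attaching along $\partial_1\sqcup\cdots\sqcup\partial_n$ into a contractible ``skeleton'' — more precisely, $|a|$ deformation retracts onto a graph which is a wedge of $n$ circles with a single vertex, and the $\partial_i$ are, up to homotopy rel basepoint, the inclusions of the circles — the restriction map that sends $f$ to the tuple $(f\circ\partial_1,\ldots,f\circ\partial_n)$ is already pinned down by the boundary conditions. The content is therefore that the space of maps $|a|\to|c|$ extending this prescribed behaviour on the $\partial_i$, and additionally matching $\partial_\rmout$, is homotopy equivalent to $PRB_n$. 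I would make this precise by choosing a CW model: present $|a|$ as a wedge of $n$ circles with the basepoint $0$-cell fixed, so that a based map $|a|\to|c|$ is the same as an $n$-tuple of loops in $|c|$ based at $\partial_\rmout(\text{basepoint})$; the boundary condition $f\circ\partial_i=\partial_i$ forces the $i$-th loop to be (a fixed parametrization of) $\partial_i$, so the \emph{based} mapping space with these conditions is already a single point up to contractible ambiguity coming from the parametrizations. The genuine degree of freedom is the \emph{unbased} wiggle room together with the constraint $f\circ\partial_\rmout=\partial_\rmout$: concretely, $\partial_\rmout$ traverses the outer boundary, which in $\pi_1$ is a specific word $w(a)$ in the generators coming from $|a|$ and a specific word $w(c)$ in those coming from $|c|$, and the requirement $f\circ\partial_\rmout=\partial_\rmout$ on the nose (not just up to homotopy) is what cuts the mapping space down.

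I would then argue that $\mathcal{F}_n$ fibres over, or is homotopy equivalent to, the space of self-homotopy-equivalences of $\bigvee_n S^1$ fixing the boundary data, and that $\pi_0$ of this space is exactly $PRB_n$ while the identity component is contractible. Both facts should be extracted from Kaufmann's computation of the weak homotopy type of $\C(n)$ and the classical identification $\fD^\circ(n)\simeq K(PRB_n,1)$ recalled in \S\ref{DiscsCactiPureRibbonBraidGroupSubsection}: since $\Pi$ is a quasifibration (the Corollary following Theorem~C) and the total space $\calE^\circ(n)$ will be shown to be a $K(PRB_n,1)$ with $\Pi_\ast$ the diagonal, the long exact sequence \eqref{LongExactSequenceI} forces $\pi_i(\mathcal{F}_n)=0$ for $i\geq 2$ and $\pi_1(\mathcal{F}_n)\cong PRB_n$ — but this is circular as stated, so instead I would give a direct argument: the space of boundary-condition-preserving maps $|a|\to|c|$ is a torsor-like object over the space of ``free homotopies'' of the outgoing loop, and the homotopy equivalence condition selects, in each path component, a contractible set; the set of path components is then computed combinatorially and identified with $PRB_n$ by matching the two graph structures. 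The main obstacle I anticipate is precisely this last identification of $\pi_0(\mathcal{F}_n)$ with $PRB_n$ in a way that is compatible with the maps to $\pi_1(\fD^\circ(n))$ and $\pi_1(\C(n))$ under $\Pi$; the contractibility of components is comparatively soft, following from the fact that a map out of a wedge of circles into an aspherical space is determined up to based homotopy by its effect on $\pi_1$, so once the boundary conditions pin down that effect, only a contractible space of parametrizations remains.
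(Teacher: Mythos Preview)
There is a genuine gap at the heart of your proposal. You write that ``the boundary condition $f\circ\partial_i=\partial_i$ forces the $i$-th loop to be (a fixed parametrization of) $\partial_i$'' and later that ``the boundary conditions pin down [the] effect [on $\pi_1$]'', leaving only contractible freedom. This is false, and the error hides exactly where the $PRB_n$ lives. In $|a_n|$ the boundary circle $\partial_i(S^1)$ is \emph{not} based at the global marked point $\bullet$; the generator $x_i\in\pi_1(|a_n|,\bullet)$ is the loop that runs along an arc $l_i$ from $\bullet$ to $\partial_i(\ast)$, around the $i$-th boundary circle, and back along $l_i$ (Definition~\ref{anDefinition}). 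The condition $f\circ\partial_i=\partial_i$ constrains $f$ on that circle but says nothing about $f|_{l_i}$. Since in $|c_n|$ the global and all local marked points coincide, $f\circ l_i$ is a \emph{loop} in $|c_n|$, representing some $w_i\in F_n$, and one finds $f_\ast(x_i)=w_i x_i w_i^{-1}$. Distinct tuples $(w_1,\ldots,w_n)$ give distinct automorphisms of $F_n$ and hence distinct components of $\mathcal{F}_n$; the remaining condition $f\circ\partial_\rmout=\partial_\rmout$ forces $f_\ast(x_1\cdots x_n)=x_1\cdots x_n$. This is exactly the set $W_n\cong PRB_n$ of \S\ref{PureRibbonBraidGroupSubsection}, and $f\mapsto(w_1,\ldots,w_n)$ is the homotopy equivalence you are looking for but never write down. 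Your ``single point up to contractible ambiguity'' conclusion would make $\mathcal{F}_n$ contractible, which contradicts the statement you are trying to prove.

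Your sketch for contractibility of components is also incomplete. Knowing that two based maps into an aspherical target are based-homotopic once they agree on $\pi_1$ does not yet show that the homotopy can be chosen \emph{through maps satisfying all of the boundary conditions on the nose}. The paper instead shows that $\Omega_f\mathcal{F}_n$ is the space of sections of the pulled-back free-loop fibration $\Omega|c_n|\to L|c_n|\to|c_n|$; asphericity of $|c_n|$ reduces this to counting sections of a covering space, and the answer is a single section precisely because $F_n$ has trivial centre. Surjectivity and injectivity of the map $\mathcal{F}_n\to W_n$ on $\pi_0$ are then proved by separate hands-on arguments using that $K_n\hookrightarrow|a_n|$ is a cofibration and that $|c_n|$ is a $K(F_n,1)$.
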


It follows immediately from \eqref{LongExactSequenceI} that $\pi_i(\calE^\circ(n))$ is trivial for $i>1$, and that $\pi_1(\calE^\circ(n))$ and $\pi_0(\calE^\circ(n))$ are computed using an exact sequence
\begin{equation}\label{LongExactSequenceII}
0\to\pi_1(\calE^\circ(n))\xrightarrow{p} PRB_n\times PRB_n\xrightarrow{q} PRB_n\xrightarrow{r}\pi_0(\calE^\circ(n))\to 0.
\end{equation}
There is an action of $PRB_n\times PRB_n$ on $PRB_n$, and $q$ is given by applying this action to the constant element $\ast$. 
Exactness at the last two terms of the sequence means that $q^{-1}(\ast)$ is the image of $p$ and $r^{-1}(\ast)$ is the image of $q$.
We can therefore show that $\pi_0(\calE^\circ(n))=\ast$ and that $p$ identifies $\pi_1(\calE^\circ(n))$ with the diagonal subgroup of $PRB_n\times PRB_n$ by proving the next proposition, which completes the proof of Theorem~A.

\begin{proposition}\label{ActionOnTheFibreProposition}
The action of $PRB_n\times PRB_n$ on $PRB_n$ is given by $(\gamma,\delta)\cdot\phi=\delta\cdot\phi\cdot\gamma^{-1}$.
\end{proposition}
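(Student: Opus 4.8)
The plan is to unwind the monodromy action of $\pi_1\bigl(\fD^\circ(n)\times\C(n)\bigr)=PRB_n\times PRB_n$ on $\pi_0(\mathcal{F}_n)$, express it in terms of how the realizations $\R\fD$ and $\R\C$ are transported around their base spaces, and then recognise those transports as translation in $PRB_n$ using the identifications behind Proposition~\ref{FibreHomotopyTypeProposition}.

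First I would make the monodromy concrete. Fix $(a,c)\in\fD^\circ(n)\times\C(n)$, so that $\mathcal{F}_n=\Map_\partial(|a|,|c|)$, and represent an element of $\pi_1$ of the base by a pair of loops, $\gamma=(a_t)$ in $\fD^\circ(n)$ and $\delta=(c_t)$ in $\C(n)$, based at $a$ and $c$. By the fibrewise analysis of the realizations carried out in Sections~\ref{RfDFibrewiseSection} and~\ref{RCFibrewiseSection}, the restrictions of $\R\fD^\circ(n)$ and $\R\C(n)$ over these loops are fibrations; since the incoming and outgoing boundary maps vary continuously as sections (Definition~\ref{RealizationsTopologyDefinition}) and $[0,1]$ is contractible, they admit trivialisations by families $\phi_t\colon|a|\to|a_t|$ and $\psi_t\colon|c|\to|c_t|$ of homotopy equivalences that respect all the boundary maps and satisfy $\phi_0=\mathrm{id}$, $\psi_0=\mathrm{id}$. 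Transporting $f\in\mathcal{F}_n$ along the pair of loops then yields the path $t\mapsto(a_t,c_t,\psi_t\circ f\circ\phi_t^{-1})$ in $\calE^\circ(n)$ — which does lie in $\calE^\circ(n)$ precisely because the $\phi_t,\psi_t$ preserve boundaries — so the resulting self-map of $\mathcal{F}_n$ is $f\mapsto m_\delta\circ f\circ m_\gamma^{-1}$, where $m_\gamma=\phi_1$ and $m_\delta=\psi_1$ are the monodromies, in $\pi_0\mathrm{hAut}_\partial(|a|)$ and $\pi_0\mathrm{hAut}_\partial(|c|)$, of the two realization fibrations. Thus the $\fD$-factor acts by precomposition, the $\C$-factor by postcomposition, and the two actions commute. (One could equally derive this formula by covering $(\gamma,\delta)$ with the neighbourhoods supplied by Theorem~C and composing the local trivialisations there.)

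It then remains to identify these monodromies with right and left translation in $PRB_n$. Here I would use that $\fD^\circ(n)$ and $\C(n)$ are $K(PRB_n;1)$'s, the descriptions of $PRB_n$ as the ribbon-braid group acting on $|a|$ and on $|c|$ recalled in \S\ref{PureRibbonBraidGroupSubsection} and \S\ref{DiscsCactiPureRibbonBraidGroupSubsection}, and the construction of the homotopy equivalence $\mathcal{F}_n\to PRB_n$ from Proposition~\ref{FibreHomotopyTypeProposition}. Choosing the reference point $f_*\in\mathcal{F}_n$ in the component $\ast$, that equivalence identifies $\pi_0\mathcal{F}_n$ with $\pi_0\mathrm{hAut}_\partial(|a|)$ via $f\mapsto[f_*^{-1}\circ f]$, so that $\phi\in PRB_n$ corresponds to the component of $f_*\circ\widehat\phi$, where $\widehat\phi$ is the self-equivalence of $|a|$ named by $\phi$. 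Under this correspondence, precomposition with $m_\gamma^{-1}$ becomes right multiplication by $\gamma^{-1}$ once one knows that $\gamma\mapsto[m_\gamma]$ is the tautological isomorphism $PRB_n\to\pi_0\mathrm{hAut}_\partial(|a|)$; and postcomposition with $m_\delta$ becomes left multiplication by $\delta$ once one knows that $f_*$ intertwines the monodromy $m_\delta$ of $\R\C(n)$ with the self-equivalence of $|a|$ named by $\delta$. Granting these, the formula of the previous paragraph reads $(\gamma,\delta)\cdot\phi=\delta\cdot\phi\cdot\gamma^{-1}$, which is the claim; specialising to $\phi=\ast$ recovers $q(\gamma,\delta)=\delta\gamma^{-1}$, consistent with the exactness statements that follow.

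The main obstacle is exactly those two facts about the monodromy maps: that each realization fibration has monodromy the \emph{identity} automorphism of $PRB_n$ rather than merely some automorphism, and that the single reference equivalence $f_*$ simultaneously matches the $PRB_n$-structure on $\pi_0\mathrm{hAut}_\partial(|a|)$ coming from $\fD^\circ(n)$ with the one on $\pi_0\mathrm{hAut}_\partial(|c|)$ coming from $\C(n)$. Both are statements about the geometry of framed little discs and cacti, and I would expect them to be precisely what \S\ref{PureRibbonBraidGroupSubsection}, \S\ref{DiscsCactiPureRibbonBraidGroupSubsection} and Proposition~\ref{FibreHomotopyTypeProposition} are arranged to deliver; everything else is bookkeeping with the trivialisations $\phi_t,\psi_t$, whose existence rests only on the realization systems being topologized in the sense of Definition~\ref{RealizationsTopologyDefinition}. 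If the clean identification proves awkward, a fallback is to verify the formula on the standard ribbon-braid and framing generators of each $PRB_n$-factor, by writing down explicit loops in $\fD^\circ(n)$ and $\C(n)$ and tracking the induced homotopy classes of self-equivalences of $|a|$ and $|c|$ directly.
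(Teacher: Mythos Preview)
Your general framework---transport $f$ by pre- and post-composition with trivialising families, then identify the monodromies---is sound in spirit, but there is a real gap on the cacti side.  The family $\psi_t\colon|c|\to|c_t|$ you posit, satisfying $\psi_t\circ\partial_i=\partial_i$ \emph{strictly}, does not exist in general: different cacti have different intersection patterns among their lobes, and since the incoming boundary maps $\partial_1,\ldots,\partial_n$ jointly surject onto $|c|$, a strictly boundary-preserving map $|c|\to|c_t|$ is overdetermined and typically not well-defined.  This is exactly why Section~\ref{RCFibrewiseSection} only produces homotopy equivalences in the weak sense of Definition~\ref{HomotopyEquivalenceDefinition} (with homotopies $K,L$ in place of strict commutation), and why $\R\C(n)\to\C(n)$ is merely a quasifibration.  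Consequently your path $t\mapsto(a_t,c_t,\psi_t\circ f\circ\phi_t^{-1})$ does not land in $\calE^\circ(n)$, and the monodromy $m_\delta\in\pi_0\mathrm{hAut}_\partial(|c|)$ is not defined as you have written it.

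The paper sidesteps this by a different decomposition of $PRB_n\times PRB_n$.  Rather than treating the factors $(\gamma,1)$ and $(1,\delta)$ separately, it proves $(\gamma,1)\cdot\phi=\phi\cdot\gamma^{-1}$ (Lemma~\ref{PiOneActionLemmaTwo}) and $(\gamma,\gamma)\cdot 1=1$ (Lemma~\ref{PiOneActionLemmaOne}); these two together force the full formula.  For $(\gamma,1)$ your approach goes through, since $\R\fD^\circ(n)$ genuinely trivialises with strict boundaries (Proposition~\ref{fDFibrewiseStructureProposition}), and the paper carries out essentially the computation you describe, checking on generators that the resulting element of $W_n$ is $\gamma$.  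For the diagonal, the paper avoids trivialising $\R\C(n)$ altogether: for each generator $\alpha_{ij}$ or $\zeta_i$ it exhibits, by hand, a \emph{loop} $t\mapsto(a_n(t),c_n(t),f_n(t))$ in $\calE^\circ(n)$, constructing $f_n(t)\colon|a_n(t)|\to|c_n(t)|$ directly from a geometric embedding of the cactus into the disc complement.  This is precisely your fallback, but applied to the diagonal rather than to the $\C$-factor alone---a choice that makes the cacti-side obstruction disappear, since one never needs a map between two distinct cacti.
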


The promised recollections on the homotopy type of $\fD^\circ(n)$ and $\C(n)$, together with the proof of Propositions~\ref{FibreHomotopyTypeProposition} and \ref{ActionOnTheFibreProposition}, are given in \S\ref{LongExactSequenceSection}.

\skippy

Now let us discuss the proof of Theorem~C.
This theorem states that $\calE^\circ(n)$ is locally fibrewise homotopy equivalent to a product, or in other words, that it is a \emph{homotopy fibre bundle}.
Recall that $\calE^\circ(n)$ consists of maps from fibres of $\R\fD^\circ(n)$ to fibres of $\R\C(n)$ satisfying certain boundary conditions.
Our proof of Theorem~C will follow from general results on spaces of maps satisfying boundary conditions, together with a study of the local fibrewise structure of $\R\fD^\circ(n)$ and $\R\C(n)$.

In Section~\ref{MappingSpacesSection} we consider pairs $(X,f_X)$ and $(Y,f_Y)$, where $X$ and $Y$ are spaces and $f_X\colon A\to X$ and $f_Y\colon A\to Y$ are maps from some fixed space $A$.
We then introduce two mapping spaces, one consisting of maps $g\colon X\to Y$ \emph{relative to $A$}, or in other words satisfying $g\circ f_X=f_Y$, and a second consisting of maps $g\colon X\to Y$ \emph{homotopy-relative to $A$}, which are equipped with homotopies $g\circ f_X\simeq f_Y$.
We show that under certain conditions the two mapping spaces are homotopy equivalent, and that homotopy type of the space of homotopy-relative maps does not change when we replace $(X,f_X)$ or $(Y,f_Y)$ with a \emph{homotopy equivalent} pair $(Z,f_Z)$.

The program outlined above extends without modification to the fibrewise setting, where we can consider the pairs $(\R\fD^\circ(n),\partial)$ over $\fD^\circ(n)$ and $(\R\C(n),\partial)$ over $\C(n)$;
here $\partial$ denotes the combined boundary map.
The corresponding relative mapping space is $\calE^\circ(n)$, and it is homotopy equivalent to the homotopy-relative mapping space $\calE^\circ_h(n)$.

In Sections~\ref{RfDFibrewiseSection} and~\ref{RCFibrewiseSection} we study the pairs $(\R\fD^\circ(n),\partial)$ and $(\R\C(n),\partial)$ respectively.
We show that, locally over $\fD^\circ(n)$ and $\C(n)$, these pairs are homotopy equivalent to trivial fibred pairs of the form $(|a|\times \fD^\circ(n),\partial\times\mathrm{Id})$ and $(|c|\times\C(n),\partial\times\mathrm{Id})$ respectively.

Finally, in Section~\ref{QuasiFibrationProofSection} we combine the results of the preceding three sections to prove Theorem~C.

\section{Computing the long exact sequence}\label{LongExactSequenceSection}

This section is given to the proof of Propositions~\ref{FibreHomotopyTypeProposition} and \ref{ActionOnTheFibreProposition}.
Recall that these propositions compute the homotopy exact sequence associated to the quasifibration $\Pi\colon\calE^\circ(n)\to\fD^\circ(n)\times\C(n)$, and lead to the proof of Theorem~A.
In \S\ref{PureRibbonBraidGroupSubsection} and \S\ref{DiscsCactiPureRibbonBraidGroupSubsection} we recall the pure ribbon braid group and its relationship to framed discs and cacti.
Then in \S\ref{FibreSubsection} and \S\ref{ActionSubsection} we prove the propositions.

\subsection{The pure ribbon braid group}\label{PureRibbonBraidGroupSubsection}

This subsection collects some facts about the pure ribbon braid group that will be useful in the subsections to follow.

Recall, for example from~\cite{\KasselTuraev}, that E.~Artin's \emph{braid group $B_n$ on $n$ strands} is the group on generators $\sigma_1,\ldots,\sigma_{n-1}$ subject to the relations $\sigma_i\sigma_j=\sigma_j\sigma_i$ for $|i-j|\geqslant 2$ and $\sigma_i\sigma_{i+1}\sigma_i=\sigma_{i+1}\sigma_i\sigma_{i+1}$.
There is a homomorphism $B_n\to\Sigma_n$ sending $\sigma_i$ to the transposition of $i$ and $i+1$.
Its kernel is the \emph{pure braid group} $PB_n$, which has generators
\[\alpha_{ij}=\sigma_{j-1}\sigma_{j-2}\cdots\sigma_{i+1}\sigma_i^2\sigma_{i+1}^{-1}\cdots\sigma_{j-2}^{-1}\sigma_{j-1}^{-1}\]
for $1\leqslant i<j\leqslant n$.
The \emph{pure ribbon braid group} is the direct product $PRB_n=PB_n\times\mathbb{Z}^n$.
The generator of the $i$-th cyclic factor is written as $\zeta_i$.

Let $F_n$ denote the free group on generators $x_1,\ldots,x_n$.
There is an embedding $B_n\hookrightarrow\Aut(F_n)$ sending $\sigma_i$ to the transformation
\[x_i\mapsto x_ix_{i+1}x_i^{-1},\quad x_{i+1}\mapsto x_i,\quad x_l\mapsto x_l\quad\mathrm{for\ }l\neq i,i+1.\]
This embedding identifies $B_n$ with the subgroup consisting of those automorphisms $f\colon F_n\to F_n$ for which $f(x_1\cdots x_n)=x_1\cdots x_n$ and for which there is a permutation $\pi$ of $\{1,\ldots,n\}$ so that each $f(x_i)$ is conjugate to $x_{\pi(i)}$.
The same embedding identifies $PB_n$ with the analogous subgroup where the permutation $\pi$ is the identity map.

In later subsections we will use the following alternative description of the pure ribbon braid group, which is more directly related to the topological situation at hand.
Given $w=(w_1,\ldots,w_n)\in(F_n)^n$ let $\alpha(w)$ denote the homomorphism $F_n\to F_n$ that sends $x_i$ to $w_i x_i w_i^{-1}$.
Set
\[W_n=\{w\in(F_n)^n\mid \alpha(w)\in\Aut(F_n),\ \alpha(w)(x_1\cdots x_n)=x_1\cdots x_n\}.\]
There is a bijection $\Lambda\colon W_n \xrightarrow{\cong} PRB_n$.
The first component of $\Lambda(w)$ is $\alpha(w)\in PB_n$; the second component is $(m_1,\ldots,m_n)\in\mathbb{Z}^n$, where $m_i$ is the sum of the exponents of the letter $x_i$ in the word $w_i$.
The isomorphism $\Lambda$ translates the group structure on $PRB_n$ into the operation
\[(v_1,\ldots,v_n)\cdot(w_1,\ldots,w_n)=(\alpha(v)(w_1)v_1,\ldots,\alpha(v)(w_n)v_n).\]

\subsection{Discs, cacti and the pure ribbon braid group}\label{DiscsCactiPureRibbonBraidGroupSubsection}

This subsection recalls how both $\fD(n)$ (or equivalently $\fD^\circ(n)$) and $\C(n)$ are Eilenberg-MacLane spaces $K(PRB_n,1)$.
We begin by choosing basepoints.

\begin{definition}\label{anDefinition}
Let $a_n\in\fD^\circ(n)$ be an element whose little discs all have the same (small) radius, are arranged horizontally across the big disc in order $1,\ldots,n$, and which are embedded without rotation, so that the local marked point lies at the top of each little disc.
For $i=1,\ldots,n$ let $l_i$ denote the line segment in $|a_n|$ passing from the basepoint $\bullet$ of the big disc to the basepoint $\partial_i(\ast)$ of the $i$-th little disc.
\begin{center}
\begin{lpic}[]{figures/BasepointDisc(,0.8 in)}
\lbl{-10,31.5;$a_n$}
\lbl{9.5,31.5;$\scriptstyle 1$}
\lbl{53.5,31.5;$\scriptstyle n$}
\lbl{170,41.5;$\scriptstyle l_1$}
\lbl{181.75,41.5;$\scriptstyle \cdots$}
\lbl{193.5,41.5;$\scriptstyle l_n$}
\end{lpic}
\end{center}
We will sometimes regard the $l_i$ as paths $l_i\colon[0,1]\to|a_n|$, and sometimes as subsets $l_i\subset |a_n|$.
The fundamental group $\pi_1(|a_n|,\bullet)$ can be identified with $F_n$; the $i$-th generator $x_i$ is represented by the loop which travels along $l_i$, then around the $i$-th circle and back down $l_i$.
\end{definition}

\begin{definition}\label{cnDefinition}
Let $c_n$ denote the cactus in which all lobes have equal length and meet at a unique point, and whose outgoing boundary $\partial_\rmout\colon S^1\to |c_n|$ has constant speed.
The intersection point coincides with the global marked point and also the local marked point on each lobe.
The lobes are cyclically ordered $n,\ldots,1$ at the intersection point, and the global marked point is chosen to lie on the $n$-th lobe.
Thus the outgoing boundary map $\partial_\rmout\colon S^1\to |c|$ traverses the lobes $n,n-1,\ldots,1$ in turn at constant speed.
\begin{center}
\begin{lpic}[]{figures/BasepointCactus(,0.4 in)}
\lbl{-15,20;$c_n$}
\lbl{25,20;$\scriptstyle n$}
\lbl{85,20;$\scriptstyle 1$}
\end{lpic}
\end{center}
The fundamental group $\pi_1(|c_n|,\bullet)$ can be identified with the free group $F_n$ on generators $x_1,\ldots,x_n$.
Here $x_i$ is represented by the loop $\partial_i\colon S^1\to |c|$.
\end{definition}

It is well known that  $\fD(n)$ (or $\fD^\circ(n)$) is a $K(PRB_n,1)$.
For instance, $\fD(n)$ is homotopy equivalent to the product of $n$ circles with the configuration space $\mathrm{Conf}(n,\mathbb{R}^2)$ of $n$ ordered points in $\mathbb{R}^2$, and $\mathrm{Conf}(n,\mathbb{R}^2)$ is a $K(PB_n,1)$ (see section 1.4.1 of \cite{\KasselTuraev}, for example).
The generator $\alpha_{ij}$ corresponds to the loop based at $a_n$ in which the $i$-th and $j$-th little discs are brought side by side beneath the other little discs, and are then rotated around one another anticlockwise before returning to their original position.
Throughout this maneouver the framing of the little discs does not change.
The generator $\zeta_i$ corresponds to the loop in which the framing of the $i$-th little disc is rotated through one full turn.

A result of R.~Kaufmann~\cite[Proposition 3.3.19]{\Kaufmann} states that $\C(n)$ is a $K(PRB_n,1)$.
The generator $\alpha_{ij}$ corresponds to the loop based at $c_n$ in which the $j$-th lobe travels across the $(i+1),\ldots,(j-1)$-th lobes so that it is adjacent to the $i$-th lobe; the $i$-th and $j$-th lobes then rotate around one another anticlockwise before the $j$-th retraces its steps to its original position.
(This description of the generator is taken from the proof of Kaufmann's result quoted above.
In order to obtain this description we have reversed the path $\alpha_i$ in the proof there; this does not affect the outcome of that proposition, since the braid group admits an automorphism inverting all of its standard generators.)
The generator $\zeta_i$ corresponds to the loop in which the parameterization of the $i$-th little disc is rotated through one full turn.

\subsection{The fibre of $\Pi\colon \calE^\circ(n)\to\fD^\circ(n)\times\C(n)$}\label{FibreSubsection}

The purpose of this subsection is to prove Proposition~\ref{FibreHomotopyTypeProposition}, which describes the homotopy type of the fibre of $\Pi$.

\begin{definition}
Let $\mathcal{F}_n$ denote the fibre of $\Pi\colon \calE^\circ(n)\to\fD^\circ(n)\times\C(n)$ over $(a_n,c_n)$.
It is the space of homotopy equivalences $f\colon |a_n|\to|c_n|$ such that $f\circ\partial_i=\partial_i$ and $f\circ\partial_\rmout=\partial_\rmout$, equipped with the compact-open topology.
\end{definition}

\begin{definition}
Define a map $\mathcal{F}_n\to W_n$ as follows.
A point $f\in\mathcal{F}_n$ is sent to $(w_1,\ldots,w_n)$, where $w_i\in F_n=\pi_1(|c_n|,\bullet)$ is obtained by applying $f$ to the line segment $l_i$.
The boundary conditions guarantee that the image of $f\circ l_i$ is a loop in $|c_n|$.
\end{definition}

We will prove Proposition~\ref{FibreHomotopyTypeProposition} by showing that  this map $\mathcal{F}_n\to W_n$ is a homotopy equivalence.
The proof is given in the next three lemmas.

\begin{lemma}
The fibre $\mathcal{F}_n$ has weakly contractible components.
\end{lemma}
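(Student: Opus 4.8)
The plan is to drop the homotopy-equivalence condition for most of the argument. Let $\Map_\partial(|a_n|,|c_n|)$ denote the space of all maps $g\colon|a_n|\to|c_n|$ with $g\circ\partial_i=\partial_i$ and $g\circ\partial_\rmout=\partial_\rmout$, so that $\mathcal{F}_n\subseteq\Map_\partial(|a_n|,|c_n|)$ is the subspace of homotopy equivalences. Since $|a_n|$ and $|c_n|$ have the homotopy type of finite complexes, being a homotopy equivalence is constant on path components of any mapping space between them; hence each path component of $\Map_\partial(|a_n|,|c_n|)$ lies either entirely in $\mathcal{F}_n$ or entirely outside it, and it suffices to prove that every path component of $\Map_\partial(|a_n|,|c_n|)$ is weakly contractible. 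Throughout I would use that $|c_n|$ is a wedge of $n$ circles, so that its universal cover $T$ is a tree, hence contractible.

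First I would fix a relative CW structure on the pair $(|a_n|,\partial|a_n|)$. The boundary $\partial|a_n|$ is the disjoint union of the circles $\partial_1(S^1),\dots,\partial_n(S^1)$ and $\partial_\rmout(S^1)$, and $\partial|a_n|\hookrightarrow|a_n|$ is a cofibration. Attaching the $n$ arcs $l_1,\dots,l_n$ of Definition~\ref{anDefinition} (which meet $\partial|a_n|$, and meet each other only at $\bullet$) as relative $1$-cells yields a $1$-skeleton $K$; since cutting $|a_n|$ open along the $l_i$ produces a disc, the space $|a_n|$ is obtained from $K$ by attaching a single relative $2$-cell along a loop $\psi\colon S^1\to K$. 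Restriction along the cofibrations $\partial|a_n|\hookrightarrow K\hookrightarrow|a_n|$ then exhibits $\Map_\partial(|a_n|,|c_n|)$ as the total space of a fibration over $\prod_{i=1}^n P_i$, where $\prod_i P_i$ is the space of maps $K\to|c_n|$ extending the prescribed boundary data and each $P_i$ is a space of paths in $|c_n|$ with two fixed endpoints. Since $|c_n|$ is path-connected and aspherical, $P_i\simeq\Omega|c_n|$ is homotopy equivalent to the discrete group $F_n$, so every path component of $\prod_i P_i$ is weakly contractible.

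Next I would identify the fibres of this fibration. The fibre over a choice of arc-images is the space $M_\lambda$ of maps $D^2\to|c_n|$ whose restriction to $S^1$ equals a prescribed loop $\lambda$, namely $\psi$ composed with the corresponding map $K\to|c_n|$. Since $D^2$ is simply connected, $M_\lambda$ is empty unless $\lambda$ lifts to a loop $\tilde\lambda$ in $T$, that is, unless $\lambda$ is null-homotopic; and when $M_\lambda$ is nonempty, lifting to $T$ identifies it homeomorphically with the space of maps $D^2\to T$ that extend $\tilde\lambda$, which is weakly contractible because $T$ is contractible. Restricting the fibration of the previous paragraph to a single path component of $\prod_i P_i$, I obtain a fibration over a weakly contractible base whose fibre is either empty or weakly contractible; hence over that component $\Map_\partial(|a_n|,|c_n|)$ is either empty or has all of its path components weakly contractible. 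Ranging over the path components of $\prod_i P_i$ then shows that every path component of $\Map_\partial(|a_n|,|c_n|)$, and in particular every component of $\mathcal{F}_n$, is weakly contractible.

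The inputs I regard as routine are the standard facts that homotopy equivalence between finite complexes is detected on $\pi_0$ of the relevant mapping spaces, that restriction along a relative CW inclusion is a Hurewicz fibration, and that the space of extensions of a given map over a cell into a contractible target is weakly contractible. The step I expect to require the most care is the explicit relative CW structure on $(|a_n|,\partial|a_n|)$: verifying that the $n$ arcs $l_i$ together with a single further $2$-cell really do build $|a_n|$ from $\partial|a_n|$, and correspondingly that the fibre of the restriction map is precisely a space of disc-fillings of a single loop. The degenerate cases $n=0$ and $n=1$, where $|c_n|$ is a point or a circle, are handled uniformly by the same argument.
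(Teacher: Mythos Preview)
Your argument is correct, and it takes a genuinely different route from the paper's.

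The paper proves weak contractibility of components by showing directly that $\Omega_f\mathcal{F}_n$ is contractible for each $f$.  It identifies $\Omega_f\mathcal{F}_n$ with a space of sections of the pulled-back free loopspace fibration $f^\ast(\Omega|c_n|\to L|c_n|\to|c_n|)$ over $|a_n|$, uses asphericity of $|c_n|$ to replace this by a covering space with fibre $F_n$, and then observes that the conjugation action of $F_n$ on itself has a unique fixed point because $F_n$ is centreless.  You instead exploit asphericity of $|c_n|$ via the contractibility of its universal cover: you build $|a_n|$ from $\partial|a_n|$ by attaching the arcs $l_i$ and a single $2$--cell, and analyse the resulting two-stage fibration on the mapping space, showing that base and fibre each have weakly contractible (or empty) components.

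Both arguments hinge on $|c_n|$ being a $K(F_n,1)$, but use it differently: the paper through the triviality of the centre of $F_n$, you through the contractibility of the universal cover.  Your approach is slightly more hands-on and, as a bonus, the parametrisation of $\pi_0\Map_\partial(|a_n|,|c_n|)$ by $\prod_i\pi_0 P_i\cong (F_n)^n$ that falls out of your tower is essentially the map $\mathcal{F}_n\to W_n$ that the paper introduces separately in the next lemma.  The paper's approach is shorter and avoids fixing a CW structure, but yours dovetails more naturally with the subsequent computation of $\pi_0\mathcal{F}_n$.
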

\begin{proof}
It will suffice to prove that, for any $f\in\mathcal{F}_n$, the loopspace $\Omega_f(\mathcal{F}_n)$ is contractible.
Note that $\Omega_f(\mathcal{F}_n)$ is the space of sections of the fibration $\Omega_f|c_n|\to |a_n|$ obtained by pulling back the free loopspace fibration $\Omega |c_n|\to L|c_n|\to |c_n|$ along $f$.
Its fibre over $x$ is $\Omega_{f(x)}|c_n|$.
We must show that the space of sections of this fibration is contractible.

Since $|c_n|$ is a $K(F_n,1)$, $\Omega_f|c_n|$ is fibre homotopy equivalent to a covering space with fibre $\pi_1(|c_n|,f(x))\cong F_n$ over $x$.
We must show that this covering space admits a unique section.
The covering space is determined by an action of the fundamental group $\pi_1(|a_n|,\bullet)$ of the base on the fibre $\pi_1(|c_n|,\bullet)$ over $\bullet$.
The sections of the covering space correspond to the fixed points of this action.

If $\phi\in\Aut(F_n)$ is the automorphism determined by $f$, then the action of $\pi_1(|a_n|,\bullet)=F_n$ on $\pi_1(|c_n|,\bullet)=F_n$ is given by $x_i(\omega)=\phi(x_i)\omega\phi(x_i)^{-1}$.
Since $\phi$ is an automorphism, $\omega$ is fixed by the action if and only if it commutes with every element of $F_n$.
The identity element is the unique element with this property, and this completes the proof.
\end{proof}

\begin{lemma}
The map $\mathcal{F}_n\to W_n$ is surjective.
\end{lemma}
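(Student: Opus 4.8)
The plan is to construct, for any given $w=(w_1,\dots,w_n)\in W_n$, an element $f\in\mathcal{F}_n$ that maps to $w$, by the standard obstruction-theoretic method of prescribing $f$ on a one-skeleton and extending it over the cells. First I would equip $|a_n|$ with a convenient CW structure. Let $A\subset|a_n|$ be the union of $\partial_\rmout(S^1)$, the arcs $l_1,\dots,l_n$, and the little circles $\partial_1(S^1),\dots,\partial_n(S^1)$. Since $a_n\in\fD^\circ(n)$ the little discs avoid the big boundary and are mutually disjoint, so these pieces overlap only at $\bullet$ (where the $l_i$ meet $\partial_\rmout(S^1)$) and at the points $\partial_i(\ast)$ (where $l_i$ meets $\partial_i(S^1)$); thus $A$ is a finite graph and $\pi_1(A,\bullet)$ is free on $x_0=[\partial_\rmout(S^1)]$ together with the usual generators $x_i=[l_i\cdot\partial_i(S^1)\cdot l_i^{-1}]$. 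Each $l_i$ is a properly embedded arc joining two distinct boundary circles of $|a_n|$, so cutting successively along $l_1,\dots,l_n$ amalgamates all $n+1$ boundary circles into one and leaves a disc; equivalently $\chi(A)=-n=\chi(|a_n|)-1$, so $|a_n|=A\cup e^2$ for a single $2$-cell whose attaching loop represents the relator $x_0^{-1}x_1x_2\cdots x_n$ in $\pi_1(A,\bullet)$ — the familiar relation expressing the outer boundary of a disc with $n$ holes as the product of the hole boundaries, read in the left-to-right order of the little discs of $a_n$.

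Next I would define $f$ on $A$ by setting $f|_{\partial_\rmout(S^1)}=\partial_\rmout$, $f|_{\partial_i(S^1)}=\partial_i$, and letting $f|_{l_i}$ be any path representing $w_i\in F_n=\pi_1(|c_n|,\bullet)$. In $|c_n|$ all lobes and all marked points meet at the single point $\bullet$, so every prescribed value of $f$ at an overlap point of $A$ equals $\bullet$; hence $f|_A$ is consistently defined, and each $f\circ l_i$ is genuinely a loop at $\bullet$ of class $w_i$. On fundamental groups $f$ then sends $x_i\mapsto w_ix_iw_i^{-1}=\alpha(w)(x_i)$ for $i\geqslant 1$, and $x_0\mapsto[\partial_\rmout\text{ of }c_n]=x_1x_2\cdots x_n$; the orderings chosen for $a_n$ and $c_n$ are set up precisely so that this product coincides with the one in the relator above. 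Consequently the obstruction to extending $f|_A$ over the $2$-cell $e^2$ is, up to conjugacy, the class $(x_1\cdots x_n)^{-1}\,\alpha(w)(x_1\cdots x_n)\in F_n$, and this vanishes exactly because $w\in W_n$ forces $\alpha(w)(x_1\cdots x_n)=x_1\cdots x_n$. Therefore $f$ extends to a map $f\colon|a_n|\to|c_n|$.

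It remains to check that this $f$ lies in $\mathcal{F}_n$ and is sent to $w$. The boundary conditions $f\circ\partial_i=\partial_i$ and $f\circ\partial_\rmout=\partial_\rmout$ hold by construction. On $\pi_1$ the map is $\alpha(w)$, which is an automorphism of $F_n$ since $w\in W_n$; as both $|a_n|$ and $|c_n|$ are aspherical with fundamental group $F_n$, a map inducing an isomorphism on $\pi_1$ is a homotopy equivalence, so $f\in\mathcal{F}_n$. Finally $f\circ l_i$ represents $w_i$ by construction, so $f$ maps to $w=(w_1,\dots,w_n)$ under $\mathcal{F}_n\to W_n$, proving surjectivity.

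I expect the only genuinely delicate point to be the bookkeeping in the first two paragraphs: verifying that $|a_n|$ really has a single $2$-cell over $A$, and that its attaching word and the word $[\partial_\rmout\text{ of }c_n]$ are literally the same product $x_1\cdots x_n$, so that the defining relation of $W_n$ is exactly what kills the obstruction. Everything else is routine.
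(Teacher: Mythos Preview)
Your argument is correct and is close in spirit to the paper's, but the mechanics differ. You build a CW structure on $|a_n|$ with $1$-skeleton $A=K_n$ (all boundary circles together with the arcs $l_i$) and a single $2$-cell, prescribe $f$ on all of $A$ at once, and then check directly that the obstruction class $f_\ast(x_0^{-1}x_1\cdots x_n)$ vanishes because $w\in W_n$. The paper instead starts from the smaller subcomplex $J_n$ (only the \emph{inner} boundary circles and the arcs $l_i$), extends for free using that $J_n$ is a strong deformation retract of $|a_n|$, and only then invokes the $W_n$ condition to see that the resulting map has $g\circ\partial_\rmout\simeq\partial_\rmout$; a final application of the homotopy extension property for $K_n\hookrightarrow|a_n|$ straightens the outer boundary to an equality.

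So you trade the paper's two-step ``extend, then homotope to fix $\partial_\rmout$'' for a single obstruction computation. Your route makes it more transparent that the defining relation of $W_n$ is \emph{exactly} the extension obstruction; the paper's route avoids having to pin down the CW structure and the precise attaching word. Both arguments ultimately hinge on the same compatibility you flag as delicate: that with the chosen $a_n$ and $c_n$, the class of $\partial_\rmout$ in $\pi_1(|c_n|,\bullet)$ matches the word $x_1\cdots x_n$ appearing in the relator and in the definition of $W_n$. The paper uses this fact just as you do, only implicitly in the line ``since $w\in W_n$ \ldots $g\circ\partial_\rmout$ is based homotopic to $\partial_\rmout$''.
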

\begin{proof}
Write $J_n\subset |a_n|$ for the union of the boundaries of the little discs with the line segments $l_i$.
Write $K_n\subset |a_n|$ for the union of $J_n$ with the boundary of the big disc $D^2$.
Choose $w=(w_1,\ldots,w_n)\in W_n$ and represent each $w_i$ by a based loop $\gamma_i\colon I\to |c_n|$.
Let $f\colon J_n\to |c_n|$ be the map that satisfies $f\circ\partial_i=\partial_i$ for each $i$ and that is given by $\gamma_i$ on $l_i$.
Now $J_n$ is a strong deformation retract of $|a_n|$, and so we may extend $f$ to a map $g\colon |a_n|\to |c_n|$.
This map satisfies $g\circ\partial_i=\partial_i$.
Also, since $w\in W_n$ it follows that $g$ is a homotopy equivalence, and that $g\circ\partial_\rmout$ is based homotopic to $\partial_\rmout$.
Then we can form a homotopy between $g|{K_n}$ and a map $K_n\to |c_n|$ that commutes with the $\partial_i$ and $\partial_\rmout$, and that is given by the $\gamma_i$ on $l_i$.
Since $K_n\hookrightarrow |a_n|$ is a cofibration, we may extend the homotopy to one between $g$ and some $h\colon |a_n|\to |c_n|$.
By construction $h$ is an element of $\mathcal{F}_n$ and its image in $W_n$ is exactly $w$.
\end{proof}

\begin{lemma}
Let $f_1,f_2\in\mathcal{F}_n$.
If the images of these elements in $W_n$ coincide, then they lie in the same component of $\mathcal{F}_n$.
\end{lemma}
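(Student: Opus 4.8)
The plan is to build a path in $\mathcal{F}_n$ from $f_1$ to $f_2$ by first constructing a suitable homotopy on the subcomplex $K_n = J_n\cup\partial D^2\subset|a_n|$ and then extending it over all of $|a_n|$. One preliminary observation simplifies the bookkeeping: if a map $g\colon|a_n|\to|c_n|$ is freely homotopic to a homotopy equivalence then $g$ is itself a homotopy equivalence, so any homotopy through maps satisfying the boundary conditions $(-)\circ\partial_i=\partial_i$ and $(-)\circ\partial_\rmout=\partial_\rmout$, and whose ends lie in $\mathcal{F}_n$, automatically remains inside $\mathcal{F}_n$. It therefore suffices to produce a homotopy from $f_1$ to $f_2$ through maps respecting the boundary conditions; in fact we shall produce one that is stationary on the $n+1$ boundary circles $\partial_1(S^1),\dots,\partial_n(S^1),\partial_\rmout(S^1)$ throughout.

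First I would construct a homotopy $h\colon K_n\times I\to|c_n|$ from $f_1|_{K_n}$ to $f_2|_{K_n}$ that is stationary on each of those $n+1$ circles. On the circles $f_1$ and $f_2$ already coincide (with $\partial_i$ or $\partial_\rmout$), so the constant homotopy works there; on each segment $l_i$ the maps $f_1\circ l_i$ and $f_2\circ l_i$ are loops in $|c_n|$ based at the common intersection point of the lobes, and they represent the same class $w_i\in\pi_1(|c_n|)=F_n$ by hypothesis, hence are homotopic rel endpoints. These pieces glue into a continuous $h$ because the chosen homotopies on the $l_i$ are stationary at the endpoints of the $l_i$, which lie on the boundary circles.

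Next I would extend $h$ over $|a_n|$. Since $K_n\hookrightarrow|a_n|$ is a cofibration and $f_1$ restricts to $h_0 = f_1|_{K_n}$, the homotopy extension property produces $\widetilde h\colon|a_n|\times I\to|c_n|$ with $\widetilde h_0 = f_1$ and $\widetilde h|_{K_n\times I} = h$. Each stage $\widetilde h_t$ is freely homotopic to $f_1$, hence a homotopy equivalence, and respects the boundary conditions because it agrees with $h_t$ on the boundary circles; so $t\mapsto\widetilde h_t$ is a path in $\mathcal{F}_n$ from $f_1$ to $\widetilde h_1$. Now $\widetilde h_1$ and $f_2$ are two maps $|a_n|\to|c_n|$ that both restrict to $f_2|_{K_n}$ on $K_n$. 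Cutting the $n$-holed disc $|a_n|$ along the arcs $l_1,\dots,l_n$ turns it into a $2$-disc, so $(|a_n|,K_n)$ is a relative CW complex with a single relative cell, in dimension $2$, and $|a_n|/K_n\simeq S^2$; on the other side $|c_n|$ is a wedge of $n$ circles, so $\pi_2(|c_n|)=0$. By the standard fact that extensions over a single relative $2$-cell are unique up to homotopy rel the subcomplex once $\pi_2$ of the target vanishes, $\widetilde h_1$ and $f_2$ are connected by a homotopy rel $K_n$; its stages respect the boundary conditions and are homotopy equivalences (being freely homotopic to $f_2$), so this is a path in $\mathcal{F}_n$. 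Concatenating the two paths gives the required path from $f_1$ to $f_2$.

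The routine part is the ``extend over the skeleton, then apply homotopy extension'' argument of the first three steps. The one point with genuine content is the last one: one must check that $|a_n|$ is obtained from $K_n$ by attaching a single $2$-cell, equivalently that $|a_n|/K_n\simeq S^2$, and that $|c_n|$ --- being a wedge of circles --- is aspherical (so that, here, one does not need Kaufmann's theorem on the asphericity of general cacti). I expect this cell-structure bookkeeping for the pair $(|a_n|,K_n)$ to be the main thing requiring care, rather than any real difficulty.
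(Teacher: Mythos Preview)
Your proof is correct and follows essentially the same route as the paper's. Both arguments first use the cofibration $K_n\hookrightarrow|a_n|$ and the hypothesis on $W_n$ to reduce to the case $f_1|_{K_n}=f_2|_{K_n}$, and then use that $(|a_n|,K_n)$ has a single relative $2$-cell together with $\pi_2(|c_n|)=0$ to obtain a homotopy rel $K_n$; the paper phrases this last step via an explicit identification $D^2\to|a_n|$ yielding maps $S^2\to|a_n|\cup_{K_n}|a_n|$ and $B^3\to\overline{|a_n|}$, while you phrase it as standard obstruction theory, but the content is the same. Your explicit remark that maps freely homotopic to a homotopy equivalence are themselves homotopy equivalences---hence the intermediate stages stay in $\mathcal{F}_n$---is a point the paper leaves implicit.
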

\begin{proof}
Let $K_n\subset|a_n|$ be as in the preceding proof.
Now $f_1$ and $f_2$ coincide on the image of every boundary map and are homotopic rel.~endpoints on each $l_i$.
Since $K_n\hookrightarrow |a_n|$ is a cofibration, we may therefore apply a homotopy to $f_2$, through elements of $\mathcal{F}_n$, until it coincides with $f_1$ on all of $K_n$.
Let us therefore assume that $f_1$ and $f_2$ coincide on $K_n$.

We may now combine $f_1$ and $f_2$ to obtain a map $f\colon |a_n|\cup_{K_n} |a_n|\to |c_n|$.
Let $\overline{|a_n|}$ denote the space obtained from $|a_n|\times I$ by identifying $K_n\times I$ with a single copy of $K_n$.
It has `boundary' $|a_n|\cup_{K_n} |a_n|$.
To produce a homotopy between $f_1$ and $f_2$ it will suffice to extend $f$ to all of $\overline{|a_n|}$.

Choose an identification map $D^2\to |a_n|$ that is a bijection away from the boundary $S^1$.
Then we obtain identification maps $S^2\to |a_n|\cup_{K_n} |a_n|$ and $B^3\to\overline{|a_n|}$.
To extend $f$ to $\overline{|a_n|}$ it will suffice to extend the composite $S^2\to |a_n|\cup_{K_n}|a_n|\xrightarrow{f}|c_n|$ to $B^3$.
But since $|c_n|$ is a $K(F_n,1)$ this is always possible.
This completes the proof.
\end{proof}

\subsection{The action of $\pi_1(\fD^\circ(n)\times\C(n))$ on $\mathcal{F}_n$}\label{ActionSubsection}

This subsection will give the proof of Proposition~\ref{ActionOnTheFibreProposition}, which states that the fundamental group $PRB_n\times PRB_n$ of $\fD^\circ(n)\times\C(n)$ acts on $\pi_0(\mathcal{F}_n)\cong PRB_n$ by the rule $(\gamma,\delta)\cdot\phi=\delta\cdot\phi\cdot\gamma^{-1}$.
The proof consists of Lemma~\ref{PiOneActionLemmaOne},  where we show that  $(\gamma,\gamma)\cdot 1=1$, and Lemma~\ref{PiOneActionLemmaTwo}, where we show that $(\gamma,1)\cdot \phi=\phi\cdot\gamma^{-1}$.

In general, if $F\to E\to B$ is a fibration, then the action of $\pi_1(B)$ on $\pi_0(F)$ is defined as follows.
Take a loop $\gamma$ in $B$ and a point $f$ in $F$.
Then $\gamma$ can be lifted to a \emph{path} in $E$ with initial point $f\in F$ and final point $g\in F$, and the effect of $[\gamma]$ on $[f]$ is given by $[\gamma]\cdot[f]=[g]$.
If $F\to E\to B$ is only a quasifibration then the lifting process just described may not be possible.
However, when it is possible, then it still leads to a description of the action of $\pi_1(B)$ on $\pi_0(F)$.

Choose a basepoint $f_n\in\mathcal{F}_n$, where $f_n$ is any function sending each $l_i$ to the constant path.

\begin{lemma}\label{PiOneActionLemmaOne}
The action of Proposition~\ref{ActionOnTheFibreProposition} satisfies $(\gamma,\gamma)\cdot 1=1$ for all $\gamma\in PRB_n$.
\end{lemma}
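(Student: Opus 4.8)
\section*{Proof proposal}

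The plan is to compute the monodromy of the quasifibration $\Pi$ along the diagonal loop directly, by producing an explicit lift. Recall how the action of $\pi_1(\fD^\circ(n)\times\C(n))$ on $\pi_0(\mathcal{F}_n)$ is computed: given $\gamma\in PRB_n$, choose representative loops $\gamma_\fD$ in $\fD^\circ(n)$ and $\gamma_\C$ in $\C(n)$ based at $a_n$ and $c_n$; since $[0,1]$ is contractible, trivialize the restricted realization bundles by homeomorphisms $\Theta_t\colon|a_n|\to|\gamma_\fD(t)|$ and $\Psi_t\colon|c_n|\to|\gamma_\C(t)|$ with $\Theta_0=\mathrm{id}$, $\Psi_0=\mathrm{id}$, chosen so that $\Theta_t$ and $\Psi_t$ carry the boundary maps of $a_n$ and $c_n$ to those of $\gamma_\fD(t)$ and $\gamma_\C(t)$. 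Then $t\mapsto(\gamma_\fD(t),\gamma_\C(t),\Psi_t\circ f_n\circ\Theta_t^{-1})$ is a path in $\calE^\circ(n)$ starting at $(a_n,c_n,f_n)$, so $(\gamma,\gamma)\cdot[f_n]$ is the class in $\pi_0(\mathcal{F}_n)\cong W_n$ of $\mu_\C\circ f_n\circ\mu_\fD^{-1}$, where $\mu_\fD=\Theta_1$ and $\mu_\C=\Psi_1$ are the monodromies. A word in the generators $\zeta_i$ and $\alpha_{ij}$ of $PRB_n$ gives a concatenation of the corresponding loops, and concatenating the lifts above lifts the concatenation, so it suffices to treat the generators one at a time.

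For $\gamma=\zeta_i$ take $\gamma_\fD$ rotating the framing of the $i$-th little disc, and $\gamma_\C$ rotating the parameterization of the $i$-th lobe, through one full turn. Neither loop changes the underlying realization, and the only effect of either is to replace the $i$-th incoming boundary map $\partial_i$ by its composite with a rotation $R_t$ of $S^1$, identically on the two sides. Since $f_n\circ(\partial_i\circ R_t)=(f_n\circ\partial_i)\circ R_t=\partial_i\circ R_t$ and the other boundary maps do not move, the constant family $f_t=f_n$ already lies in $\calE^\circ(n)$ over $(\gamma_\fD(t),\gamma_\C(t))$ for every $t$; it is a loop at $f_n$, so $(\zeta_i,\zeta_i)\cdot[f_n]=[f_n]$.

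The case $\gamma=\alpha_{ij}$ is the substance of the lemma. Take the standard loops of \S\ref{DiscsCactiPureRibbonBraidGroupSubsection}, shrinking the little discs so $\gamma_\fD$ stays in $\fD^\circ(n)$; as $\gamma_\fD$ is realized by an isotopy of the little discs with no change of framing, the trivialization $\Theta_t$ exists, and likewise $\Psi_t$. By the first paragraph $(\alpha_{ij},\alpha_{ij})\cdot[f_n]=[\mu_\C\circ f_n\circ\mu_\fD^{-1}]$, and we identify this class by splitting $\Lambda$ into its $PB_n$- and $\mathbb{Z}^n$-components. For the $PB_n$-component: under the identifications $\pi_1(|a_n|,\bullet)\cong F_n\cong\pi_1(|c_n|,\bullet)$ of \S\ref{FibreSubsection} the two monodromies induce the same automorphism of $F_n$ — this is where the compatibility of the two $K(PRB_n,1)$-structures enters, via $f_{n\ast}=\mathrm{id}_{F_n}$ and the matching of the circle generators $x_k$ — so $(\mu_\C\circ f_n\circ\mu_\fD^{-1})_\ast=\mathrm{id}$ and the $PB_n$-component vanishes. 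For the $\mathbb{Z}^n$-component: its $k$-th entry is the winding number of $(\mu_\C\circ f_n\circ\mu_\fD^{-1})(l_k)$ about the $k$-th lobe of $c_n$; since $\mu_\fD$ and $\mu_\C$ act trivially on first homology, this equals the winding number about the $k$-th little disc of the loop obtained by dragging $l_k$ along the isotopy and returning along $l_k$; and because the isotopy rotates no little disc, that dragged arc winds only around the \emph{other} little discs, so the number is $0$. Hence the $W_n$-class is trivial and $(\alpha_{ij},\alpha_{ij})\cdot[f_n]=[f_n]$, completing the proof.

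The hard part is the $\alpha_{ij}$ case, and within it the vanishing of the $\mathbb{Z}^n$-component: one must be sure that the self-homeomorphism $\mu_\fD$ of $|a_n|$ coming from the standard disc loop fixes each boundary circle pointwise and drags the arcs $l_k$ with no self-winding — equivalently, that the standard generators of $PRB_n$ on the two sides were chosen with matching framings, so that the round trip recovers $f_n$ up to a homotopy \emph{through} $\mathcal{F}_n$ rather than up to a nontrivial Dehn twist about some boundary circle. This bookkeeping is carried out using the explicit descriptions of the generators in \S\ref{DiscsCactiPureRibbonBraidGroupSubsection} together with the identification $\mathcal{F}_n\simeq W_n$.
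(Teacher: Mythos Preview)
Your treatment of the generators $\zeta_i$ is fine and agrees with the paper: the constant family $f_t=f_n$ works because neither the disc realization nor the cactus realization changes along those loops.

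The $\alpha_{ij}$ case, however, has a genuine gap. You write ``trivialize the restricted realization bundles by homeomorphisms $\Theta_t\colon|a_n|\to|\gamma_\fD(t)|$ and $\Psi_t\colon|c_n|\to|\gamma_\C(t)|$ \ldots\ chosen so that $\Theta_t$ and $\Psi_t$ carry the boundary maps of $a_n$ and $c_n$ to those of $\gamma_\fD(t)$ and $\gamma_\C(t)$.'' On the disc side this is legitimate: $\R\fD^\circ(n)\to\fD^\circ(n)$ is a genuine fibre bundle with boundary-preserving local trivializations (Proposition~\ref{fDFibrewiseStructureProposition}). On the cactus side it is not. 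Along the loop representing $\alpha_{ij}$ the cactus $\gamma_\C(t)$ changes combinatorial type: at $t=0$ all lobes meet at a single point, but at intermediate times the $j$-th lobe is attached to one of the other lobes away from that point. The spaces $|c_n|$ and $|\gamma_\C(t)|$ are then not even homeomorphic, so no $\Psi_t$ of the kind you postulate exists. Proposition~\ref{CFibrewiseStructureProposition} gives only a local fibrewise \emph{homotopy} equivalence in the sense of Definition~\ref{HomotopyEquivalenceDefinition}, which does not respect boundary maps on the nose; using such a $\Psi_t$ would take you out of $\calE^\circ(n)$ and into $\calE^\circ_h(n)$, and the remainder of your argument would need to be reworked accordingly.

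The paper sidesteps this by a direct geometric construction rather than a monodromy computation: for each $t$ it embeds the moving cactus $|\gamma_\C(t)|$ into the moving disc complement $|\gamma_\fD(t)|$ and produces $f_n(t)$ by radial projection onto the embedded cactus. Because both sides move in tandem, this family closes up to a \emph{loop} in $\calE^\circ(n)$, so no endpoint comparison in $W_n$ is ever needed.
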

\begin{proof}
It will suffice to prove this for $\gamma=\zeta_i$ and for $\gamma=\alpha_{ij}$.
Loops in $\fD^\circ(n)$ and $\C(n)$ representing these elements were described in \S\ref{DiscsCactiPureRibbonBraidGroupSubsection}.

Let us begin with the case $\gamma=\zeta_i$.
Let $t\mapsto a_n(t)$ and $t\mapsto c_n(t)$ be the loops in $\fD^\circ(n)$ and $\C(n)$ that represent $\zeta_i$.
Then $t\mapsto(a_n(t),c_n(t),f_n)$ determines a \emph{loop} in $\calE^\circ(n)$ and our claim follows.

Now fix $\gamma=\alpha_{ij}$ and let $t\mapsto a_n(t)$, $t\mapsto c_n(t)$ be the loops in $\fD^\circ(n)$ and $\C(n)$ respectively that represent $\alpha_{ij}$.
Then for each $t\in S^1$ we can find maps $f_n(t)\colon |a_n(t)|\to |c_n(t)|$ such that $t\mapsto (a_n(t),c_n(t),f_n(t))$ defines a \emph{loop} in $\calE^\circ(n)$.
For at all times $t\in S^1$ we can find an oriented embedding of $|c_n(t)|$ into $|a_n(t)|$ that sends the basepoint to the basepoint, such that the image of the $i$-th lobe separates the $i$-th little disc from the others, and such that the highest point each lobe is its local marked point.
Then the required $f_n(t)$ can be constructed by projecting out from the centre of the $i$-th little disc onto the $i$-th lobe, and inwards from the boundary of the big disc onto the cactus.

For example take $n=2$, $i=1$, $j=2$.
Then $a_n(t)$ and $c_n(t)$ are depicted here:
\begin{center}
\begin{lpic}[]{figures/DiscCactiLoops(4 in,)}
\lbl{16.5,148.5;$\scriptstyle 1$}
\lbl{47,148.5;$\scriptstyle 2$}
\lbl{102.5,137;$\scriptstyle 1$}
\lbl{123.5,158;$\scriptstyle 2$}
\lbl{207,148.5;$\scriptstyle 1$}
\lbl{176.5,148.5;$\scriptstyle 2$}
\lbl{266.5,157;$\scriptstyle 1$}
\lbl{287.5,136;$\scriptstyle 2$}
\lbl{343.5,148.5;$\scriptstyle 1$}
\lbl{374,148.5;$\scriptstyle 2$}
\lbl{16.5,28.5;$\scriptstyle 1$}
\lbl{47,28.5;$\scriptstyle 2$}
\lbl{102.5,13;$\scriptstyle 1$}
\lbl{123.5,35;$\scriptstyle 2$}
\lbl{208,28.5;$\scriptstyle 1$}
\lbl{179.5,28.5;$\scriptstyle 2$}
\lbl{266.5,35;$\scriptstyle 1$}
\lbl{287.5,12;$\scriptstyle 2$}
\lbl{343.5,28.5;$\scriptstyle 1$}
\lbl{374,28.5;$\scriptstyle 2$}
\lbl{33,85;$t=0$}
\lbl{114,85;$0<t<\pi$}
\lbl{196,85;$t=\pi$}
\lbl{279,85;$\pi<t<2\pi$}
\lbl{364,85;$t=2\pi$}
\end{lpic}
\end{center}
And the family of embeddings $|c_n(t)|\to|a_n(t)|$ is depicted here:
\begin{center}
\begin{lpic}[]{figures/EmbeddedCactiLoops(4 in,)}
\end{lpic}
\end{center}
\end{proof}

\begin{lemma}\label{PiOneActionLemmaTwo}
The action of Proposition~\ref{ActionOnTheFibreProposition} satisfies $(\gamma,1)\cdot \phi=\phi\cdot\gamma^{-1}$ for all $\gamma\in PRB_n$.
\end{lemma}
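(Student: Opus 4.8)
The plan is to lift the loop $(\gamma,\mathrm{const}_{c_n})$ in $\fD^\circ(n)\times\C(n)$ to a path in $\calE^\circ(n)$ and read off its endpoint. Fix $\phi\in\mathcal{F}_n$ and choose a loop $t\mapsto a_n(t)$ in $\fD^\circ(n)$, based at $a_n$, representing $\gamma$. The fibrewise analysis of $\R\fD^\circ(n)$ carried out in Section~\ref{RfDFibrewiseSection} shows that the pair $(\R\fD^\circ(n),\partial)$ pulled back along this loop is, over the contractible interval $[0,1]$, (fibrewise) homotopy equivalent to a trivial fibred pair; choosing such a trivialization $\Theta$ with $\Theta_0=\mathrm{Id}$ gives a family of homotopy equivalences $\Theta_t\colon|a_n|\to|a_n(t)|$ compatible with the boundary maps, together with a monodromy self-equivalence $\mu_\gamma:=\Theta_1^{-1}\colon|a_n|\to|a_n|$ satisfying $\mu_\gamma\circ\partial_i=\partial_i$ and $\mu_\gamma\circ\partial_\rmout=\partial_\rmout$. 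Then $t\mapsto(a_n(t),c_n,\phi\circ\Theta_t^{-1})$ is a lift of $(\gamma,\mathrm{const})$ starting at $(a_n,c_n,\phi)$, so (working in $\calE^\circ_h(n)\simeq\calE^\circ(n)$ if the trivialization is only homotopy-valued, using Section~\ref{MappingSpacesSection}) we get $(\gamma,1)\cdot[\phi]=[\phi\circ\mu_\gamma]$.

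Next I would identify $[\phi\circ\mu_\gamma]$ under the homotopy equivalence $\mathcal{F}_n\to W_n$, $f\mapsto(f\circ l_1,\ldots,f\circ l_n)$, followed by $\Lambda\colon W_n\xrightarrow{\cong}PRB_n$. Since $\mu_\gamma$ fixes $\bullet$ and each $\partial_i(\ast)$, the path $\mu_\gamma\circ l_i$ is homotopic rel endpoints to $\beta_i\cdot l_i$ for a based loop $\beta_i$, and because $\mu_\gamma$ respects the boundary maps the induced automorphism of $\pi_1(|a_n|,\bullet)=F_n$ is $\alpha(u_\gamma)$ with $u_\gamma=([\beta_1],\ldots,[\beta_n])\in W_n$. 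On the other hand, if $\phi\mapsto w$ then the boundary conditions force $\phi_\ast=\alpha(w)$, using that $[\partial_i]=x_i$ in $\pi_1(|c_n|,\bullet)$ and $x_i=[l_i\cdot\partial_i\cdot\overline{l_i}]$ in $\pi_1(|a_n|,\bullet)$ as in \S\ref{DiscsCactiPureRibbonBraidGroupSubsection} and \S\ref{FibreSubsection}. Hence $\phi\circ\mu_\gamma\circ l_i\simeq(\phi\circ\beta_i)\cdot(\phi\circ l_i)$ represents $\alpha(w)(u_{\gamma,i})\,w_i$, which is precisely the $i$-th coordinate of the product $w\cdot u_\gamma$ in $W_n$. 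Applying $\Lambda$ gives $(\gamma,1)\cdot\phi=\phi\cdot\Lambda(u_\gamma)$; in particular $\gamma\mapsto\Lambda(u_\gamma)=(\gamma,1)\cdot 1$ is an anti-homomorphism $PRB_n\to PRB_n$.

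It remains to check $\Lambda(u_\gamma)=\gamma^{-1}$. Since $\gamma\mapsto\gamma^{-1}$ is also an anti-homomorphism and the $\zeta_i,\alpha_{ij}$ generate $PRB_n$, it suffices to verify this on generators, much as Lemma~\ref{PiOneActionLemmaOne} is verified only on generators. Here I would use the explicit loops in $\fD^\circ(n)$ representing $\zeta_i$ and $\alpha_{ij}$ recalled in \S\ref{DiscsCactiPureRibbonBraidGroupSubsection}: for $\zeta_i$ the loop rotates the framing of the $i$-th little disc once, so the monodromy fixes the generators $x_j$ but modifies $l_i$ by a full turn around the $i$-th circle, giving $u_{\zeta_i}=(1,\ldots,x_i^{\mp1},\ldots,1)$ and thus $\Lambda(u_{\zeta_i})=\zeta_i^{-1}$; for $\alpha_{ij}$ the monodromy is the point-pushing homeomorphism associated with the braiding loop, and a direct computation of its effect on the $l_k$ gives $\Lambda(u_{\alpha_{ij}})=\alpha_{ij}^{-1}$. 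Combining the three steps yields $(\gamma,1)\cdot\phi=\phi\cdot\gamma^{-1}$ for all $\gamma$ and $\phi$, which completes the proof of Proposition~\ref{ActionOnTheFibreProposition}. The one genuine difficulty is this last step: the underlying topology is routine, but several orientation conventions have to be tracked at once — the direction of path lifting and its relation to $\mu_\gamma$ versus $\mu_\gamma^{-1}$, the conventions in the definitions of $\Lambda$ and of the group law on $W_n$, and the orientations built into the standard generating loops — and these must conspire to produce $\gamma^{-1}$ rather than $\gamma$, which is the same bookkeeping already implicit in the identification of $\fD^\circ(n)$ with $K(PRB_n,1)$ in \S\ref{DiscsCactiPureRibbonBraidGroupSubsection}.
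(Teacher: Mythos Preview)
Your proof is correct and follows essentially the same route as the paper's: lift the loop via a boundary-compatible trivialization of $|a_n(t)|$, identify the endpoint as $\phi$ precomposed with the monodromy, translate this into the $W_n$-product, and then verify the monodromy element on the generators $\zeta_i$ and $\alpha_{ij}$. The only difference is that the paper constructs the family $h_n(t)\colon|a_n|\to|a_n(t)|$ of genuine homeomorphisms by hand for the chosen loops (so there is no need to pass through $\calE^\circ_h(n)$), whereas you obtain it by invoking the fibre-bundle structure of Proposition~\ref{fDFibrewiseStructureProposition}; since that proposition does give honest boundary-compatible trivializations, your hedge about homotopy-valued trivializations is unnecessary.
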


\begin{proof}
We identify $PRB_n$ with $W_n$.
Represent $\phi\in W_n$ by $(a_n,c_n,f)\in\mathcal{F}_n$.
Represent $\gamma$ by a loop $t\mapsto a_n(t)$ in $\fD^\circ(n)$ based at $a_n$, so that $(\gamma,1)$ is represented by $t\mapsto (a_n(t),c_n)$.
The representative of $\gamma$ can be chosen so that there is a continuous family of homeomorphisms $h_n(t)\colon |a_n|\to |a_n(t)|$, $t\in[0,1]$, satisfying $h_n(t)\circ\partial_i=\partial_i$ and $h_n(t)\circ\partial_\rmout=\partial_\rmout$ for all $t$.
Then we can form a path in $\calE^\circ(n)$ given by $t\mapsto (a_n(t),c_n,f\circ h_n(t)^{-1})$.
The endpoint of this path is $(a_n,c_n,f\circ h_n(1)^{-1})\in\mathcal{F}_n$.

We must show that $f\circ h_n(1)^{-1}\in\calF_n$ represents $\phi\cdot\gamma^{-1}$.
To $h_n(1)$ we assign an element $w=(w_1,\ldots,w_n)\in W_n$ as follows.
The arc $h_n(1)(l_i)$ in $|a_n|$ has the same endpoints as $l_i$, and so is homotopic rel.~endpoints to the concatenation of a loop $\omega_i$ based at $\bullet$ with the arc $l_i$.
Then $w_i=[\omega_i]$.
Using the description of the group operation on $W_n$, it follows that $f\circ h_n(1)^{-1}$ represents precisely $\phi\cdot w^{-1}$.

So we must show that $w=\gamma$.
In fact the assignment $h_n(1)\mapsto w$ is a homomorphism from the group of boundary-fixing homeomorphisms of $|a_n|$ into $W_n$, and so it suffices to prove $w=\gamma$ in the special cases $\gamma=\alpha_{ij}$ and $\gamma=\zeta_i$.
In these cases the family of homeomorphisms $h_n(t)$ can be seen explicity, and the identities $w=\gamma$ follow.
\end{proof}

\section{Relative mapping spaces}\label{MappingSpacesSection}

This section begins our work towards a proof of Theorem~C.
Recall that Theorem~C describes the fibrewise homotopy type of $\calE^\circ(n)$ locally over $\fD^\circ(n)\times\C(n)$.
Recall also that $\calE^\circ(n)$ is a fibrewise mapping space, whose fibre over $(a,c)$ consists of homotopy equivalences $f\colon |a|\to |c|$ with the property that $f\circ\partial = \partial$.
Here we have written $\partial$ for the maps 
\begin{align*}
\partial_\rmout\sqcup\partial_1\sqcup\cdots\sqcup\partial_n\colon &\textstyle\bigsqcup_{i=0}^n S^1\to |a|,\\
\partial_\rmout\sqcup\partial_1\sqcup\cdots\sqcup\partial_n\colon &\textstyle\bigsqcup_{i=0}^n S^1\to |c|.
\end{align*}
In order to understand $\calE^\circ(n)$, this section will study spaces of maps satisfying a condition of the form $f\circ\partial = \partial$.

Fix a compact Hausdorff space $A$.
We will consider \emph{spaces under $A$}, by which we mean pairs $(X,f_X)$ consisting of a compact Hausdorff space $X$ and a continuous map $f_X\colon A\to X$.
There are no further assumptions on $f_X$.
Given spaces $(X,f_X)$ and $(Y,f_Y)$ under $A$, we will consider spaces of maps $g\colon X\to Y$ satisfying the strict condition $g\circ f_X=f_Y$, which we call \emph{maps relative to $A$}, and also maps $g\colon X\to Y$ equipped with a homotopy  $g\circ f_X\simeq f_Y$, which we call \emph{maps homotopy-relative to $A$}.

The section has two parts.
The first part introduces what it means for two spaces under $A$ to be \emph{homotopy equivalent}.
The second part introduces two mapping spaces between spaces under $A$, one space of maps relative to $A$, and another space of maps homotopy-relative to $A$.
We compare the two notions, and we show that the homotopy type of the homotopy-relative mapping space is preserved if one of the spaces under $A$ is replaced by a homotopy-equivalent space under $A$.

\begin{note}
Here we will prove non-fibrewise versions of all results.
However, all results immediately generalize to the setting of spaces fibred over $B$ by replacing `compact', `Hausdorff' and `compact-open' with their fibrewise analogues.
\end{note}

\begin{notation}
Homotopies will always be parameterized by $[0,1]=I$, and a homotopy $H$ from $f$ to $g$ will be written $H\colon f\Rightarrow g$.
Given homotopies $H_1\colon g_0\Rightarrow g_1$ and $H_2\colon g_1\Rightarrow g_2$, we write $H_2\cdot H_1\colon g_0\Rightarrow g_2$ for their concatenation.
Given maps
\[A\xrightarrow{k} B\xrightarrow{g,h} C\xrightarrow{l}D\]
and a homotopy $H\colon g\Rightarrow h$, we write $H\circ k\colon g\circ k\Rightarrow h\circ k$  and $l\circ H\colon l\circ g\Rightarrow l\circ h$ and for the induced homotopies between the composites.
\end{notation}

\subsection{Homotopy equivalences of spaces under $A$}

In this subsection we will study the following notion of homotopy equivalence between spaces under $A$.
It is somewhat nonstandard in the sense that, given spaces $(X,f_X)$ and $(Y,f_Y)$ under $A$, we will consider maps $g\colon X\to Y$ that do not necessarily satisfy $g\circ f_X=f_Y$, but only satisfy a homotopy version of the condition.
Nevertheless this is the notion best suited to our purposes.

\begin{definition}\label{HomotopyEquivalenceDefinition}
A \emph{homotopy equivalence} from $(X,f_X)$ to $(Y,f_{Y})$ is a 6-tuple $(\phi,\psi,G,H,K,L)$ consisting of maps
\[\phi\colon X\to Y,\qquad\psi\colon Y\to X,\]
homotopies
\[G\colon\psi\circ\phi\Rightarrow\mathrm{Id}_X,\qquad H\colon\phi\circ\psi\Rightarrow\mathrm{Id}_{ Y},\]
and homotopies
\[K\colon\phi\circ f_X\Rightarrow f_{ Y},\quad L\colon\psi\circ f_{ Y}\Rightarrow f_X\]
such that the two homotopies $L\cdot (\psi\circ K)$ and $G\circ f_X$ from $\psi\circ\phi\circ f_X$ to $f_X$ are homotopic relative to their endpoints, and such that the two homotopies
$K\cdot (\phi\circ L)$ and $H\circ f_{Y}$ from $\phi\circ\psi\circ f_{Y}$ to $f_{Y}$ are homotopic relative to their endpoints.
We write $(X,f_X)\simeq(Y,f_Y)$ if there is a homotopy equivalence from $(X,f_X)$ to $(Y,f_Y)$.  See Lemma~\ref{EquivalenceRelationLemma} below.
\end{definition}

\begin{example}
\begin{enumerate}
\item 
Let $(X,f_X)$ and $(Y,f_Y)$ be spaces under $A$.
If $X$ and $Y$ are homotopy equivalent relative to $A$, then $(X,f_X)$ and $(Y,f_Y)$ are homotopy equivalent in the sense above.
\item
If $(X,f_X)$ and $(X,g_X)$ are spaces under $A$ with $f_X\simeq g_X$, then $(X,f_X)$ and $(X,g_X)$ are homotopy equivalent.
\item
For any $c_1,c_2\in\C(n)$ the pairs $(|c_1|,\partial)$ and $(|c_2|,\partial)$ are homotopy equivalent in the present sense, even though there are no maps between them relative to $A= S^1\sqcup\bigsqcup_{i=1}^n S^1$.
See Section~\ref{RCFibrewiseSection}.
\end{enumerate}
\end{example}

\begin{lemma}\label{EquivalenceRelationLemma}
Homotopy equivalence is an equivalence relation on spaces under $A$.
\end{lemma}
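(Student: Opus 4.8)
The plan is to check reflexivity, symmetry and transitivity by writing down the required $6$-tuples explicitly; the coherence conditions of Definition~\ref{HomotopyEquivalenceDefinition} hold trivially in the first two cases and require a short $2$-categorical bookkeeping argument in the third.

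For reflexivity, given $(X,f_X)$ one takes $\phi=\psi=\mathrm{Id}_X$ and lets $G,H,K,L$ be the constant homotopies at $\mathrm{Id}_X$, $\mathrm{Id}_X$, $f_X$, $f_X$ respectively; then both coherence conditions compare a concatenation of constant homotopies with a constant homotopy and so hold on the nose. For symmetry, if $(\phi,\psi,G,H,K,L)$ is a homotopy equivalence from $(X,f_X)$ to $(Y,f_Y)$, then $(\psi,\phi,H,G,L,K)$ is one from $(Y,f_Y)$ to $(X,f_X)$: interchanging $\phi\leftrightarrow\psi$, $G\leftrightarrow H$, $K\leftrightarrow L$ and $X\leftrightarrow Y$ simply swaps the two coherence conditions of the definition with one another.

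For transitivity, suppose $(\phi,\psi,G,H,K,L)\colon(X,f_X)\to(Y,f_Y)$ and $(\phi',\psi',G',H',K',L')\colon(Y,f_Y)\to(Z,f_Z)$. Set $\phi''=\phi'\phi$ and $\psi''=\psi\psi'$, and define the homotopies by whiskering and concatenation: $G''=G\cdot(\psi\circ G'\circ\phi)$, $H''=H'\cdot(\phi'\circ H\circ\psi')$, $K''=K'\cdot(\phi'\circ K)$, $L''=L\cdot(\psi\circ L')$. The content is then to check that $L''\cdot(\psi''\circ K'')$ and $G''\circ f_X$ are homotopic relative to their endpoints (the other condition is dual). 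Expanding $\psi''\circ K''$ via the definition of $K''$, grouping the two inner factors as $\psi\circ\bigl(L'\cdot(\psi'\circ K')\bigr)$, and applying the first coherence condition for the second equivalence replaces this block, rel endpoints, by $\psi\circ G'\circ f_Y$; the interchange law for horizontal and vertical composition of homotopies (valid up to homotopy rel endpoints) then rewrites the result, rel endpoints, as $(\psi\circ K)\cdot(\psi\circ G'\circ\phi\circ f_X)$; and a final application of the first coherence condition for the first equivalence turns $L\cdot(\psi\circ K)$ into $G\circ f_X$, yielding $G''\circ f_X$. Throughout one uses only that whiskering and the interchange law respect the relation ``homotopic rel endpoints'' and that concatenation of homotopies is associative and unital up to this relation.

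The main obstacle is precisely this last coherence verification: the bookkeeping with nested concatenations of whiskered homotopies is fiddly, and it is worth organizing it by thinking of a homotopy $\phi\circ f_X\Rightarrow f_Y$ as a path in $\Map(A,Y)$ and of ``homotopic rel endpoints'' as equality in the fundamental groupoid of that mapping space, so that the whole computation becomes an identity between composites of morphisms in these groupoids. Everything above carries over verbatim to the fibrewise setting, replacing mapping spaces by fibrewise mapping spaces, as noted before the statement.
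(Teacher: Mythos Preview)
Your proof is correct and follows the same approach as the paper: the $6$-tuple you write down for transitivity is exactly the one the paper gives. The paper's proof simply asserts that this tuple is a homotopy equivalence without checking the coherence conditions, whereas you supply the verification via the interchange law and the given coherences; your argument is thus a fuller version of the same proof.
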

\begin{proof}
The relation is evidently reflexive and symmetric.
If $(\phi,\psi,G,H,K,L)$ is a homotopy equivalence from $(X,f_X)$ to $(Y,f_Y)$ and $(\phi',\psi',G',H',K',L')$ is a homotopy equivalence from $(Y,f_Y)$ to $(Z,f_Z)$, then 
\[\left(\phi'\phi,\psi\psi',G\cdot (\psi G'\phi),H'\cdot(\phi' H\psi'),K'\cdot (\phi' K),L\cdot (\psi L')\right)\]
is a homotopy equivalence from $(X,f_X)$ to $(Z,f_Z)$.
Thus the relation is transitive.
\end{proof}

\begin{lemma}\label{RelativeHomotopyEquivalenceLemma}
Suppose given a space $(X,f_X)$ under $A$ and a cofibrant embedding $i\colon T\hookrightarrow X$ of a contractible space.
Let $(X/T,f_{X/T})$ be the space under $A$ in which $f_{X/T}$ is the composite of $f_X$ with $X\to X/T$. 
Then $(X,f_X)$ and $(X/T,f_{X/T})$ are homotopy equivalent.
\end{lemma}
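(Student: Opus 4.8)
The plan is to exploit the homotopy extension property of the cofibration $i\colon T\hookrightarrow X$, together with a contraction of $T$, in order to produce from a \emph{single} homotopy on $X$ all six pieces of data demanded by Definition~\ref{HomotopyEquivalenceDefinition}, arranged so that the two coherence conditions hold essentially on the nose. The compact Hausdorff hypothesis ensures $i$ is a closed inclusion and $X/T$ is again compact Hausdorff, so that $(X/T,f_{X/T})$ is a legitimate space under $A$.

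First I would fix a contraction $c\colon T\times I\to T$ with $c_0=\mathrm{Id}_T$ and $c_1$ constant at some $t_0\in T$. The map $X\times\{0\}\cup i(T)\times I\to X$ that equals $\mathrm{Id}_X$ on $X\times\{0\}$ and $i\circ c$ on $T\times I$ extends, since $i$ is a cofibration, to a homotopy $\Phi\colon X\times I\to X$ with $\Phi_0=\mathrm{Id}_X$ and $\Phi_s|_T=i\circ c_s$ for all $s$. In particular $\Phi_1$ is constant on $T$, so it factors as $\Phi_1=\psi\circ q$ for a unique continuous $\psi\colon X/T\to X$, where $q\colon X\to X/T$ is the quotient map. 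Moreover each $\Phi_s$ carries $T$ into $i(T)$, so $q\circ\Phi$ sends $T\times I$ to the collapsed point; as $q\times\mathrm{Id}_I$ is a quotient map, $q\circ\Phi$ descends to a homotopy $\overline\Phi\colon(X/T)\times I\to X/T$ with $q\circ\Phi=\overline\Phi\circ(q\times\mathrm{Id}_I)$, $\overline\Phi_0=\mathrm{Id}_{X/T}$ and $\overline\Phi_1=q\circ\psi$. Now I would assemble the $6$-tuple: set $\phi=q$ and take $\psi$ as above; let $G\colon\psi\circ\phi\Rightarrow\mathrm{Id}_X$ be $\Phi$ run backwards ($G(x,s)=\Phi(x,1-s)$) and $H\colon\phi\circ\psi\Rightarrow\mathrm{Id}_{X/T}$ be $\overline\Phi$ run backwards. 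The identity $q\circ\Phi=\overline\Phi\circ(q\times\mathrm{Id}_I)$ then gives the on-the-nose relation $q\circ G=H\circ q$. Since $q\circ f_X=f_{X/T}$ by the very definition of $f_{X/T}$, I may take $K\colon\phi\circ f_X\Rightarrow f_{X/T}$ to be the constant homotopy, and set $L=G\circ f_X\colon\psi\circ\phi\circ f_X\Rightarrow f_X$; note $\psi\circ\phi\circ f_X=\psi\circ f_{X/T}$, so $L$ has the required source and target.

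It then remains to check the two coherence conditions. Because $K$ is constant, $\psi\circ K$ is constant and $L\cdot(\psi\circ K)$ is just $L=G\circ f_X$ up to reparametrization, hence homotopic to $G\circ f_X$ relative to endpoints; this is the first condition. For the second, using $q\circ G=H\circ q$ we compute
\[
\phi\circ L=\phi\circ(G\circ f_X)=(q\circ G)\circ f_X=(H\circ q)\circ f_X=H\circ(q\circ f_X)=H\circ f_{X/T},
\]
and since $K$ is constant, $K\cdot(\phi\circ L)$ is $H\circ f_{X/T}$ up to reparametrization, hence homotopic to $H\circ f_{X/T}$ relative to endpoints. This is the second condition, completing the proof.

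The only genuinely delicate point is the on-the-nose identity $q\circ G=H\circ q$: building $G$ and $H$ from two unrelated homotopy inverses of $q$ would leave here only a homotopy-rel-endpoints relation, which is still enough but would require the usual adjoint-equivalence bookkeeping. Extracting both $G$ and $H$ from the single homotopy $\Phi$ sidesteps this entirely, which is what makes the coherence verifications trivial. Finally, in accordance with the Note preceding this subsection, the whole argument transports verbatim to the fibrewise setting over a base $B$: one replaces cofibrations, quotients and the homotopy extension property by their fibrewise analogues, and both the extension of $\Phi$ and the descent of $q\circ\Phi$ to $\overline\Phi$ go through unchanged.
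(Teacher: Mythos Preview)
Your proof is correct and follows essentially the same approach as the paper's: both extract $\phi=q$, $\psi$, $G$, $H$, $K$ (constant), and $L$ from a single HEP extension of a contraction of $T$, so that the key identity $q\circ G=H\circ q$ holds on the nose. You are simply more explicit than the paper about reversing directions and verifying the two coherence conditions, which the paper leaves to the reader.
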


\begin{proof}
Let $f\colon I\times X\to X$ extend the identity map $\{0\}\times X\to X$ and a null homotopy $I\times T\to T$.
Let $\phi\colon X\to X/T$ be the collapse map and let $\psi\colon X/T\to X$ be induced by $f_1=f|\{1\}\times X$.
Let $G$ be the homotopy given by $f$.
Let $H$ be the homotopy whose composite with $\phi$ is given by $f$; this $H$ exists because $f(I\times T)\subset T$.
Let $K$ to be the constant homotopy and let $L$ be the homotopy obtained from $G$.
Then $(\phi,\psi,G,H,K,L)$ is the required homotopy equivalence.
\end{proof}

\subsection{Spaces of maps between spaces under $A$}

Now we introduce two mapping spaces between spaces under $A$.

\begin{definition}
Let $(X,f_X)$ and $(Y,f_Y)$ be spaces under $A$.
Set
\[\Map_f(X,Y)=\{g\colon X\to Y \mid g\circ f_X=f_Y\}\]
and
\[\Map_f^h(X,Y)=\{(g,H)\mid g\colon X\to Y,\ H\colon g\circ f_X\Rightarrow f_Y\}.\]
These are topologized as subspaces of $\Map(X,Y)$ and $\Map(X\cup_{f_X}A\times I,Y)$ respectively, both equipped with the compact-open topology.
There is an inclusion $\Map_f(X,Y)\hookrightarrow\Map_f^h(X,Y)$ given by taking the constant homotopy.
\end{definition}

\begin{proposition}\label{StrictToHomotopyProposition}
The inclusion $\Map_f(X,Y)\hookrightarrow \Map_f^h(X,Y)$ is a homotopy equivalence so long as for any space $K$ the map
$f_X\times \mathrm{Id}_K\colon A\times K\to X\times K$ has the homotopy extension property.
This restricts to a homotopy equivalence between the subspaces consisting of those elements for which the map $g$ is a homotopy equivalence.
\end{proposition}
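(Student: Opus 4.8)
The plan is to construct an explicit homotopy inverse $r$ to the inclusion $i\colon\Map_f(X,Y)\hookrightarrow\Map_f^h(X,Y)$ by carrying out, once and for all over the parameter space $P=\Map_f^h(X,Y)$, the obvious idea: given $(g,H)$, use $H$ to deform $g$ to a map that is strictly compatible with $f_X$. The whole point is to do this deformation continuously in $(g,H)$, and that is exactly what the hypothesis on $f_X\times\mathrm{Id}_K$ buys us when we take $K=P$.

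Concretely, I would first note two canonical continuous maps out of $P$: the evaluation $e\colon X\times P\to Y$, $(x,(g,H))\mapsto g(x)$, and the auxiliary homotopy $\widehat H\colon A\times I\times P\to Y$, $(a,t,(g,H))\mapsto H(a,t)$; both are continuous since $(g,H)\mapsto g$ and $(g,H)\mapsto H$ are continuous on $P$ (which sits inside $\Map(X\cup_{f_X}A\times I,Y)$) and $X$, $A\times I$ are locally compact Hausdorff. By the very definition of $\Map_f^h$ one has $\widehat H(a,0,p)=e(f_X(a),p)$, so $e$ (viewed at time $0$) together with $\widehat H$ constitute the initial data of a homotopy extension problem along $f_X\times\mathrm{Id}_P\colon A\times P\to X\times P$. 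Applying the hypothesis with $K=P$ gives a solution $G\colon X\times I\times P\to Y$ with $G(-,0,p)=e(-,p)$ and $G(f_X(a),t,p)=H(a,t)$ for $p=(g,H)$. I would then define $r\colon P\to\Map_f(X,Y)$ by $r(p)=G(-,1,p)$; since $G(f_X(a),1,p)=H(a,1)=f_Y(a)$ this does land in $\Map_f(X,Y)$, and it is continuous by the exponential law ($X$ compact Hausdorff).

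Next I would check that $r$ is a two-sided homotopy inverse of $i$, with both homotopies already present inside $G$. For $r\circ i\simeq\mathrm{Id}$: for $g\in\Map_f(X,Y)$ the element $i(g)$ has constant auxiliary homotopy, so $G(f_X(a),s,i(g))=f_Y(a)$ for every $s$; hence $s\mapsto G(-,s,i(g))$ is a path in $\Map_f(X,Y)$ from $g$ to $r(i(g))$, continuous in $g$. For $i\circ r\simeq\mathrm{Id}_P$: for $p=(g,H)$ put $g_s=G(-,s,p)$ and $H_s(a,t)=H\bigl(a,s+t(1-s)\bigr)$; then $g_s\circ f_X=H(-,s)=H_s(-,0)$ and $H_s(-,1)=f_Y$, so $(g_s,H_s)\in P$, and $s\mapsto(g_s,H_s)$ is a path in $P$ from $p$ to $(r(p),\mathrm{const})=i(r(p))$, continuous in $p$. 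This establishes the first assertion. For the statement about homotopy equivalences I would observe that in each of the homotopies above the underlying-map component travels along a path in $\Map(X,Y)$ (through the maps $G(-,s,\cdot)$), hence stays within a single homotopy class of maps $X\to Y$; since any map homotopic to a homotopy equivalence is one, $i$ and $r$ and both homotopies restrict to the subspaces of elements whose underlying map is a homotopy equivalence, giving the restricted homotopy equivalence.

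I expect the genuine content to be entirely concentrated in the parametrised homotopy extension step: the hypothesis ``$f_X\times\mathrm{Id}_K$ has HEP for all $K$'' is precisely what lets the extension be performed continuously in the parameter $(g,H)$, which would not follow from $f_X$ merely being a cofibration. Everything after that is bookkeeping — identifying the relevant evaluation maps, keeping track of which component of $(g,H)$ moves, and checking continuity of maps assembled from $G$, the structure maps $f_X,f_Y$, and reparametrisations of $I$ via the exponential law using compactness of $A$, $X$, $Y$, $I$. I would anticipate the exponential-law verifications and the handling of $\Map_f^h(X,Y)$ as a subspace of $\Map(X\cup_{f_X}A\times I,Y)$ to be the most tedious, but not the hard, part.
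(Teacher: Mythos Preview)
Your proposal is correct and follows essentially the same approach as the paper: both apply the homotopy extension property with $K=\Map_f^h(X,Y)$ to the universal evaluation data, obtaining a parametrised extension $G$ (the paper's $\beta$), and then use the reparametrisation $H(a,s+t(1-s))$ of the auxiliary homotopy (the paper's $\gamma$, built from ``an appropriate retraction'') to produce the required homotopies. The only organisational difference is that the paper assembles $\beta$ and $\gamma$ into a single map $\omega$ whose adjoint is directly a deformation retraction of $\Map_f^h(X,Y)$ onto $\Map_f(X,Y)$, whereas you verify $r\circ i\simeq\mathrm{Id}$ and $i\circ r\simeq\mathrm{Id}$ separately; the underlying homotopies are identical.
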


\begin{proof}
This is a standard result, but we give a proof here in order to indicate just how the given assumptions are used.

Since $X$ and $Y$ are compact Hausdorff, continuous maps $(X\cup_{f_X}A\times I)\times K\to Y$ are in bijection with continuous maps $K\to\Map((X\cup_{f_X}A\times I),Y)$.
Thus the evaluation map $\alpha\colon (X\cup_{f_X}A\times I)\times\Map_f^h(X,Y)\to Y$ is continuous and extends to a continuous map $\beta\colon X\times I\times\Map_f^h(X,Y)\to Y$.
By choosing an appropriate retraction we can form a map $\gamma\colon (A\times I)\times I\times\Map_f^h(X,Y)\to Y$ such that
\begin{eqnarray*}
\gamma((a,t),0,m)&=&\alpha(a,t,m), \\
\gamma((a,0),s,m)&=&\alpha(a,s,m),\\ 
\gamma((a,t),1,m)&=&\alpha(a,1,m),
\end{eqnarray*}
and such that if $m\in\Map_f(X,Y)$ then $\gamma((a,t),s,m)=m(a)$ is independent of $t$ and $s$.
Now $\beta$ and $\gamma$ combine to give a map $\omega\colon(X\cup_{f_X} A\times I)\times I\times\Map_f^h(X,Y)\to Y$ that has an adjoint $\phi\colon I\times\Map_f^h(X,Y)\to\Map_f^h(X,Y)$.

By construction, $\phi|{\{0\}\times\Map_f^h(X,Y)}$ is the identity;
$\phi|{\{1\}\times\Map_f^h(X,Y)}$ has image contained in $\Map_f(X,Y)$;
and $\phi|{I\times\Map_f(X,Y)}$ has image contained in $\Map_f(X,Y)$.
Thus $\phi|{\{1\}\times\Map_f^h(X,Y)}$ is the required homotopy inverse.
\end{proof}

The next result shows that the homotopy type of $\Map_f^h(X,Y)$ depends only on the homotopy equivalence class of $(X,f_X)$ and $(Y,f_Y)$.

\begin{proposition}\label{MappingSpacesFunctorialProposition}
Let $(\phi,\psi,G,H,K,L)$ be a homotopy equivalence from $(Y,f_Y)$ to $(\tilde Y,f_{\tilde Y})$.
Then the two maps
\[\Phi\colon\Map_f^h(X,Y)\to\Map_f^h(X,\tilde Y),\quad\Psi\colon\Map_f^h(X,\tilde Y)\to \Map_f^h(X,Y)\]
given by $\Phi(g,F)=(\phi\circ g,K\cdot(\phi\circ F))$ and $\Psi(\tilde g,\tilde F)=(\psi\circ\tilde g,L\cdot(\psi\circ\tilde F))$ are inverse homotopy equivalences.
This restricts to a homotopy equivalence between the subspaces consisting of homotopy equivalences.
The analogous result holds for homotopy equivalences from $(X,f_X)$ to $(\tilde X,f_{\tilde X})$.
\end{proposition}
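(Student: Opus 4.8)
The plan is to note first that $\Phi$ and $\Psi$ are assembled entirely from post-composition with the fixed maps $\phi,\psi$ and from concatenation with the fixed homotopies $K,L$; since $A$ is compact Hausdorff (fibrewise compact Hausdorff over $B$ in the fibred version), the exponential law invoked in the proof of Proposition~\ref{StrictToHomotopyProposition} makes these operations continuous and shows that $\Phi$ and $\Psi$ are well defined as maps between the stated subspaces of $\Map(X\cup_{f_X}A\times I,-)$. The content is therefore to produce homotopies $\Psi\circ\Phi\simeq\mathrm{Id}$ and $\Phi\circ\Psi\simeq\mathrm{Id}$. Conceptually, $\Map_f^h(X,Y)$ is the homotopy fibre of $f_X^\ast\colon\Map(X,Y)\to\Map(A,Y)$ over $f_Y$, and the proposition is an instance of the fact that a homotopy equivalence of maps induces a homotopy equivalence of homotopy fibres; the data $G$, $H$ together with the two coherence conditions of Definition~\ref{HomotopyEquivalenceDefinition} are exactly what is needed to make this work. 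I would carry it out directly.

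For $\Psi\circ\Phi\simeq\mathrm{Id}$: unwinding the definitions gives $\Psi(\Phi(g,F))=(\psi\phi g,\ L\cdot(\psi\circ K)\cdot(\psi\phi\circ F))$, and the task is to connect this to $(g,F)$ by a path in $\Map_f^h(X,Y)$ that depends continuously (hence naturally) on $(g,F)$. I would do this in three moves, in each of which only the fixed data and $F$ enter the second coordinate. \emph{First}, using that $L\cdot(\psi\circ K)$ and $G\circ f_Y$ are homotopic rel endpoints (a coherence condition of Definition~\ref{HomotopyEquivalenceDefinition}), homotope the second coordinate, with first coordinate $\psi\phi g$ held fixed, to $(G\circ f_Y)\cdot(\psi\phi\circ F)$. \emph{Second}, the map $(a,t,s)\mapsto G(F(a,t),s)$ fills the square whose two boundary paths from $\psi\phi gf_X$ to $f_Y$ are $(G\circ f_Y)\cdot(\psi\phi\circ F)$ and $F\cdot(G\circ gf_X)$, so it gives a rel-endpoints homotopy between them; use it to replace the second coordinate by $F\cdot(G\circ gf_X)$, still with first coordinate $\psi\phi g$. \emph{Third}, slide along $G$: let the first coordinate run through $g_r=G(g(-),r)$ from $\psi\phi g$ at $r=0$ to $g$ at $r=1$, while the second coordinate runs through $F$ concatenated with the portion of $G\circ gf_X$ lying over $[r,1]$ (so that at $r=1$ it is just $F$), ending at $(g,F)$. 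Concatenating the three moves yields $\Psi\circ\Phi\Rightarrow\mathrm{Id}$. The homotopy $\Phi\circ\Psi\Rightarrow\mathrm{Id}$ on $\Map_f^h(X,\tilde Y)$ is produced symmetrically, with $H$, $K\cdot(\phi\circ L)$ and the other coherence condition in place of $G$, $L\cdot(\psi\circ K)$.

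For the statement about homotopy equivalences, observe that $g$ is a homotopy equivalence if and only if $\phi\circ g$ is: ``only if'' because $\phi$ is one, and ``if'' because $G\circ g$ exhibits $g\simeq\psi\phi g=\psi\circ(\phi g)$ while $\psi$ is a homotopy equivalence. Hence $\Phi$ and $\Psi$ carry the subspaces of homotopy equivalences into one another, and the homotopies above restrict to those subspaces since they move the first coordinate only through the maps $g_r=G(g(-),r)$, which all lie in a single homotopy class. For the variable-$X$ version, given a homotopy equivalence $(\phi,\psi,G,H,K,L)$ from $(X,f_X)$ to $(\tilde X,f_{\tilde X})$ one sets $\Phi'(\tilde g,\tilde F)=(\tilde g\circ\phi,\ \tilde F\cdot(\tilde g\circ K))$ and $\Psi'(g,F)=(g\circ\psi,\ F\cdot(g\circ L))$, and runs the same argument; it is again naturality of the homotopy fibre, now in the source variable.

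I expect the main obstacle to be nothing deep, but rather the bookkeeping in the second coordinate: one must check carefully that the two coherence conditions of Definition~\ref{HomotopyEquivalenceDefinition} are precisely what makes the first and second moves above available, and that the various reparametrized concatenations fit together into one continuous homotopy of self-maps of the (subspace of the) fibrewise mapping space. The continuity throughout is handled, exactly as in Proposition~\ref{StrictToHomotopyProposition}, by the exponential law for the compact Hausdorff space $A$ (fibrewise compact Hausdorff over $B$ in the fibred setting); everything else is routine.
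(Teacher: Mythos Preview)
Your proposal is correct and follows essentially the same route as the paper's proof: compute $\Psi\circ\Phi$, use the coherence condition $L\cdot(\psi\circ K)\simeq G\circ f_Y$ to replace the concatenated homotopy, use the square filled by $G\circ F$ to swap the order to $F\cdot(G\circ gf_X)$, and then slide along $G$ to reach $(g,F)$. You supply more detail than the paper does (the explicit description of the slide along $G$, the homotopy-fibre interpretation, and the argument for why the restriction to homotopy equivalences is preserved), but the skeleton is identical.
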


\begin{proof}
We will show that $\Psi\circ\Phi\simeq\mathrm{Id}$.
For $\Psi\circ\Phi$ is the map
\[(g,F)\mapsto (\psi\circ\phi\circ g, L\cdot[(\psi\circ K)\cdot(\psi\circ\phi\circ F)]).\]
We have the following homotopies relative to endpoints:
\begin{eqnarray*}
L\cdot[(\psi\circ K)\cdot(\psi\circ\phi\circ F)] 
&\simeq& [L\cdot (\psi\circ K)]\cdot(\psi\circ\phi\circ F) \\
&\simeq& (G\circ f_Y)\cdot(\psi\circ\phi\circ F)\\
&\simeq& F\cdot (G\circ g\circ f_X).
\end{eqnarray*}
The first of these is a reparametrization, the second comes from our assumption on the two homotopies $\phi\circ\psi\circ f_Y\Rightarrow f_Y$, and the third is obtained by using $G$ and $F$ simultaneously.
It follows that $\Psi\circ\Phi$ is homotopic to
\[(g,F)\mapsto (\psi\circ\phi\circ g, F\cdot(G\circ g\circ f_X)).\]
This is homotopic to the identity, as required.
That $\Phi\circ\Psi$ is homotopic to the identity follows from the analogous argument, as does the second part of the proposition.
\end{proof}

\section{The fibrewise structure of $\R\fD^\circ(n)$}\label{RfDFibrewiseSection}

Let \[\partial\colon\bigsqcup_{i=0}^n S^1\times\fD^\circ(n)\to\R\fD^\circ(n)\]
denote the combined boundary map $\partial_\rmout\sqcup\bigsqcup_{i=1}^n\partial_i$.
In this section we will study the fibrewise properties of $\R\fD^\circ(n)$ over $\fD^\circ(n)$, relative to $\partial$, using the language of Section~\ref{MappingSpacesSection}.
To be precise, we will work in the setting of spaces fibred \emph{over} $\fD^\circ(n)$ (and its open subsets), and in this setting we will consider $(\R\fD^\circ(n),\partial)$ as a space \emph{under} $\bigsqcup_{i=0}^n S^1\times\fD^\circ(n)$.
We prove that each $a\in\fD^\circ(n)$ has a neighbourhood $U$ over which $(\R\fD^\circ(n),\partial)$ is isomorphic to $(|a|\times\fD^\circ(n),\partial\times\mathrm{Id})$.
Moreover, we prove that $(\R\fD^\circ(n),\partial)$ satisfies the hypothesis of Proposition~\ref{StrictToHomotopyProposition}.

\begin{proposition}\label{fDFibrewiseStructureProposition}
\begin{enumerate}
\item
The combined boundary map $\partial$ extends to a fibrewise open embedding $\bigsqcup_{i=0}^n S^1\times[0,1)\times\fD^\circ(n)\hookrightarrow\R\fD^\circ(n)$.
In particular it is a fibrewise cofibration.
\item
$\R\fD^\circ(n)\to\fD^\circ(n)$ is a fibre bundle.
Moreover, local trivializations $\R\fD^\circ(n)|U\cong U\times |a|$ can be chosen compatible with the boundary maps.
\end{enumerate}
\end{proposition}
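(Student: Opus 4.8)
The plan is to prove the two assertions separately, in each case working directly with the round geometry of little discs. Throughout write $D_i(a)=a_i(D^2)$ for the $i$-th little disc of $a$, and $a_{0,i}$ for the $i$-th little disc embedding of a fixed element $a_0$.

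\emph{Part 1.} For $a\in\fD^\circ(n)$ the discs $D_1(a),\dots,D_n(a)$ are compact, pairwise disjoint, and disjoint from $\partial D^2$, so
\[
\delta(a)=\tfrac13\min\Bigl(\{1\}\cup\{\mathrm{dist}(D_i(a),D_j(a)):i\neq j\}\cup\{\mathrm{dist}(D_i(a),\partial D^2):1\le i\le n\}\Bigr)
\]
is a positive continuous function on $\fD^\circ(n)$, bounded above by $\tfrac13$. Extending each $a_i$ to an affine (conformal) self-map of $\mathbb{C}$ and writing $\rho_i(a)$ for the radius of $D_i(a)$, I would define collar maps, for $s\in[0,1)$,
\[
C_0(\theta,s,a)=(1-\delta(a)s)\,e^{i\theta},\qquad
C_i(\theta,s,a)=a_i\bigl((1+\tfrac{\delta(a)}{\rho_i(a)}s)\,e^{i\theta}\bigr),
\]
an inward collar of $\partial D^2$ and outward collars of $\partial D_i(a)$, all of Euclidean width $\delta(a)$. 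The size of $\delta(a)$ guarantees that the images of these $n+1$ collars lie in $|a|$, are pairwise disjoint, and are open in $|a|$, and that each $C_i(-,-,a)$ is a homeomorphism onto its image; so the $C_i$ assemble into a fibrewise open embedding $\bigsqcup_{i=0}^n S^1\times[0,1)\times\fD^\circ(n)\hookrightarrow\R\fD^\circ(n)$ which restricts to $\partial$ along $s=0$, continuity being immediate from the formulas. Since the image of $\partial$ is closed in $\R\fD^\circ(n)$ and has this product collar neighbourhood, $\partial$ is a fibrewise closed cofibration; in particular it satisfies the hypothesis of Proposition~\ref{StrictToHomotopyProposition}.

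\emph{Part 2.} I would prove local triviality by fixing $a_0\in\fD^\circ(n)$ and producing, over a sufficiently small neighbourhood $U$ of $a_0$, a continuous family of homeomorphisms $\Phi_a\colon D^2\to D^2$ $(a\in U)$ with $\Phi_{a_0}=\mathrm{Id}$, equal to the identity near $\partial D^2$, carrying $D_i(a_0)$ onto $D_i(a)$, and satisfying $\Phi_a\circ(a_{0,i}|_{S^1})=a_i|_{S^1}$ for each $i$. Given such a family, the assignment $(a,x)\mapsto(a,\Phi_a^{-1}(x))$ is a homeomorphism $\R\fD^\circ(n)|_U\xrightarrow{\ \cong\ }U\times|a_0|$ of spaces over $U$ which intertwines the boundary maps (the condition on $\partial D^2$ handles $\partial_\rmout$, the condition on the $a_{0,i}|_{S^1}$ handles $\partial_i$), and this is exactly Part 2 — such trivializations existing near every point means $\R\fD^\circ(n)\to\fD^\circ(n)$ is a fibre bundle. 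To build the $\Phi_a$ I would use the affine ``difference maps'' $g_i(a)=a_i\circ a_{0,i}^{-1}$, written $g_i(a)(x)=M_i(a)x+b_i(a)$, which depend continuously (in fact smoothly) on $a$ and satisfy $M_i(a_0)=I$, $b_i(a_0)=0$. Choosing pairwise disjoint closed neighbourhoods $W_i\subset\mathrm{int}\,D^2$ of the $D_i(a_0)$ and functions $\mu_i\colon D^2\to[0,1]$ equal to $1$ on $D_i(a_0)$ and supported in $\mathrm{int}\,W_i$, set $\Phi_a$ equal to the identity off $\bigcup_i W_i$ and equal to
\[
\Phi_a(x)=x+\mu_i(x)\bigl((M_i(a)-I)x+b_i(a)\bigr)\qquad(x\in W_i).
\]
For $a$ close enough to $a_0$ the perturbation term has Lipschitz norm $<1$, so $\Phi_a=\mathrm{Id}+(\text{contraction})$ is a homeomorphism of $D^2$ (contraction-mapping principle); it is the identity near $\partial D^2$, it coincides with $g_i(a)$ on $D_i(a_0)$ — whence $\Phi_a$ carries $D_i(a_0)$ onto $D_i(a)$ and hence $|a_0|$ onto $|a|$ — and $\Phi_a\circ(a_{0,i}|_{S^1})=g_i(a)\circ(a_{0,i}|_{S^1})=a_i|_{S^1}$. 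Continuity of $a\mapsto\Phi_a$ and of $a\mapsto\Phi_a^{-1}$ is then clear, as is $\Phi_{a_0}=\mathrm{Id}$.

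\emph{Expected main obstacle.} Nothing here is deep, but the one step needing genuine care is the construction in Part~2 of a family $\Phi_a$ that is simultaneously a bona fide homeomorphism, correct \emph{on the nose} along the boundary circles $a_{0,i}(S^1)$ and near $\partial D^2$, and continuous in $a$. The device that keeps this elementary is to realise $\Phi_a$ as the identity plus a bump-supported perturbation of small Lipschitz norm, so that homeomorphy comes for free; what then remains — disjointness of the regions $W_i$, the Lipschitz estimate, and verification of the boundary identities — is routine bookkeeping.
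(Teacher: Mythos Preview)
Your Part~1 is essentially the paper's own argument: the paper also chooses a continuous gauge function (called $d(a)$ there) controlling how far the little discs can be dilated while remaining disjoint and inside the open big disc, and writes down the same radial collar formulas.

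Your Part~2 is correct but takes a genuinely different route. The paper does not build ambient homeomorphisms of $D^2$. Instead it first treats the case $n=1$ by writing down an explicit homeomorphism $S^1\times[0,1]\to|a|$ via linear interpolation $(\theta,r)\mapsto(1-r)\,\partial_1(\theta)+r\,\phi(\theta)$ between the inner and outer boundary circles ($\phi$ being the framing of the little disc), observes that the same formula remains a homeomorphism for $b$ near $a$, and composes the two parameterisations to trivialise. For general $n$ it covers $\fD^\circ(n)$ by sets of the form $A=\{b:\text{each }D_i(b)\subset\mathrm{int}\,D_i(a)\}$ and, over each such set, reduces to the $n=1$ case inside every little disc of $a$. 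Your isotopy-extension style argument---bump-supported affine perturbations of the identity with Lipschitz norm $<1$---is more uniform and makes compatibility with \emph{all} boundary maps (in particular $\partial_\rmout$, since $\Phi_a=\mathrm{Id}$ near $\partial D^2$) completely transparent; the paper's construction is lower-tech and more explicit but leaves a little more to the reader.
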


\begin{proof}
We will construct a fibrewise open embedding $\bigsqcup_{i=0}^n S^1\times[0,1)\to\R\fD^\circ(n)$ extending $\partial$.
The first claim follows.

Find a continuous function $d\colon\fD^\circ(n)\to(0,1)$ such that, for each $a\in\fD^\circ(n)$, the little discs of $a$ can be dilated by a factor $(1+d(a))$ yet still be disjoint and lie within the interior of the disc of radius $(1-d(a))$.
Extend the $i$-th little disc of $a$ to an affine linear map $a_i\colon\mathbb{R}^2\to\mathbb{R}^2$.
The required embedding is then given in the fibre over $a$ by
$(\theta,r)\mapsto [1-d(a)r]\theta$
on the $0$-th cofactor and by
$(\theta,r)\mapsto a_i( [1+d(a)r]\theta)$
on the $i$-th cofactor, for $i>0$.

We now turn to the second claim, beginning with a special case.
Fix $a\in\fD^\circ(1)$.
Let $\phi\colon S^1\to S^1$ denote the framing of $a$.
There is a homeomorphism $S^1\times [0,1]\to |a|$ given by $(\theta,r)\mapsto (1-r)\partial_1(\theta) + r\phi(\theta)$.
In a neighbourhood $U$ of $a$ the map $(\theta,r)\mapsto (1-r)\partial_1(\theta)+r\phi(\theta))$ is a homeomorphism $S^1\times[0,1]\to |b|$.
By combining the two homeomorphisms we obtain a trivialization of $\R\fD^\circ(n)$ over $U$.

Let $a\in\fD^\circ(n)$.
Let $A\subset\fD^\circ(n)$ consist of those $b$ for which each little disc of $b$ lies in the interior of the corresponding little disc of $a$.
This is an open subset of $\fD^\circ(n)$, and such open subsets cover $\fD^\circ(n)$.
The required local trivialization of $\R\fD^\circ(n)$ near $a$ can now be formed by restricting attention to each little disc of $a$ and applying the result for the case $n=1$ given above.
\end{proof}

\section{The fibrewise structure of $\R\C(n)$}\label{RCFibrewiseSection}

The last section studied the fibrewise properties of realizations of framed little discs.
In this section we turn to the analogous issue for cacti.
Let \[\partial\colon\bigsqcup_{i=0}^n S^1\times\C(n)\to\R\C(n)\]
denote the combined boundary map $\partial_\rmout\sqcup\bigsqcup_{i=1}^n\partial_i$.
We will study the fibrewise properties of $\R\C(n)$ over $\C(n)$, relative to $\partial$, using the language of Section~\ref{MappingSpacesSection}.
To be precise, we will work in the setting of spaces fibred \emph{over} $\C(n)$ (and its open subsets), and in this setting we will consider $(\R\C(n),\partial)$ as a space \emph{under} $\bigsqcup_{i=0}^n S^1\times\C(n)$.
Our result for cacti is much more involved than its counterpart for framed little discs.

\begin{proposition}\label{CFibrewiseStructureProposition}
Fix $c\in\C(n)$ and form the fibrewise spaces $(\R\C(n),\partial)$ and $(|c|\times\C(n),\partial\times\mathrm{Id})$ under $\bigsqcup_{i=0}^n S^1\times\C(n)$.
Then there is an open neighbourhood $U$ of $|c|$ over which $(\R\C(n),\partial)$ and $(|c|\times\C(n),\partial\times\mathrm{Id})$ are homotopy equivalent.
\end{proposition}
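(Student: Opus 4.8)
\emph{The plan.} I would work throughout with the identification $\C(n)\cong\calF(n)\times\Mon(I,\partial I)\times(S^1)^n$, write $c=c(x_0,f_0,\underline z_0)$, and use the fact recorded in Section~\ref{CSection} that the cactus $c(x,f,\underline z)(S^1)$, and hence the realization $|c(x,f,\underline z)|$, depends only on $x$ and $\underline z$. The first move is to choose $U$ to be the preimage in $\C(n)$ of $\mathrm{St}\times W\times Z$, where $\mathrm{St}$ is a small neighbourhood of $x_0$ in $\calF(n)$ meeting only the cells $D_{\underline X}$ with $\underline X_0\subseteq\underline X$ (for instance the open star of the cell $D_{\underline X_0}$ whose relative interior contains $x_0$; by Note~\ref{CellDecompositionNote} this is open, is covered by the relative interiors of such $D_{\underline X}$, and each $D_{\underline X}$ is contractible), $W$ is a convex contractible neighbourhood of $f_0$ in $\Mon(I,\partial I)$, and $Z$ is a neighbourhood of $\underline z_0$ in $(S^1)^n$. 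Over $U$ I would first dispose of the $(S^1)^n$-direction: translating the $i$-th coordinate by $-z_i$ gives a fibrewise homeomorphism over $Z$ carrying $(\R\C(n),\partial)$ to its restriction over $\underline z_0$ and commuting with all the boundary maps. Next I would dispose of the $\Mon(I,\partial I)$-direction: since the realization is independent of $f$, varying $f$ over the convex set $W$ merely precomposes the boundary maps $\partial_i,\partial_\rmout$ with based reparametrizations of $S^1$, and a linear homotopy within $\Mon(I,\partial I)$ back to $f_0$, together with the observation that $(X,f_X)\simeq(X,g_X)$ whenever $f_X\simeq g_X$, reduces us to $f=f_0$. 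It then remains to show that the universal realization $\R\C(n)|_{\mathrm{St}}\to\mathrm{St}$, with its combined boundary map $\partial$, is fibrewise homotopy equivalent over $\mathrm{St}$ to the constant pair $(|c|\times\mathrm{St},\partial^{c}\times\mathrm{Id})$, where $\partial^{c}\colon\bigsqcup_{i=0}^nS^1\to|c|$ is the combined boundary map of $c$.

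\emph{The arc-collapse map.} For $x\in\mathrm{St}$ lying in the relative interior of a cell $D_{\underline X}$ (so $\underline X_0\subseteq\underline X$ and $x_0\in D_{\underline X}$), the straight-line path from $x$ to $x_0$ inside $D_{\underline X}\cong\prod_j\Delta^{d(j)}$ shrinks to zero length exactly those sub-arcs of the lobes of $c(x)$ that are absent from $c(x_0)$, and thereby degenerates the cactus $c(x)(S^1)$ onto $c(x_0)(S^1)=|c|$ via a quotient map $q_x\colon c(x)(S^1)\to|c|$. I would check, using the cell decomposition of $\calF(n)$ exactly as in the continuity arguments of Proposition~\ref{CactiProposition} and Proposition~\ref{OmegaGIsAnAlgebraProposition}, that the $q_x$ assemble into a continuous fibrewise map $\Phi\colon\R\C(n)|_{\mathrm{St}}\to|c|\times\mathrm{St}$. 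The subspace collapsed by $q_x$ is a forest inside the graph $c(x)(S^1)$, so iterating Lemma~\ref{RelativeHomotopyEquivalenceLemma} shows each $q_x$ is a homotopy equivalence; thus $\Phi$ is a fibrewise map that is a homotopy equivalence on every fibre. Moreover $q_x\circ\partial_i^{c(x)}=\partial_i^{c}\circ r_i(x)$ and $q_x\circ\partial_\rmout^{c(x)}=\partial_\rmout^{c}\circ r_\rmout(x)$ for based reparametrizations $r_i(x),r_\rmout(x)\in\Mon(I,\partial I)$ depending continuously on $x$ and equal to the identity at $x=x_0$; the linear homotopies within the convex set $\Mon(I,\partial I)$ from these reparametrizations to the identity then provide a homotopy $K\colon\Phi\circ\partial\Rightarrow\partial^{c}\times\mathrm{Id}$, exhibiting $\Phi$ as a morphism of spaces under $\bigsqcup_{i=0}^nS^1\times\mathrm{St}$ in the homotopy-relative sense of Section~\ref{MappingSpacesSection}.

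\emph{Upgrading to a fibrewise homotopy equivalence.} The remaining task is to promote $\Phi$ to a genuine fibrewise homotopy equivalence in the sense of Definition~\ref{HomotopyEquivalenceDefinition}, i.e.\ to produce a fibrewise homotopy inverse together with coherence homotopies $G,H,L$. I would do this by induction over the cells of $\mathrm{St}$, ordered by dimension. Over the bottom stratum $\interior D_{\underline X_0}$ the map $\Phi$ is a fibrewise homeomorphism, so there is nothing to do. For the inductive step, over $\interior D_{\underline X}$ the space $\R\C(n)$ is the trivial bundle with fibre the aspherical graph $|c_{\underline X}|$ (the realization of a cactus in $\interior D_{\underline X}$), and $D_{\underline X}$ is contractible; because every fibre in sight is aspherical, the successive obstructions to extending the homotopy inverse and the coherence homotopies already built on $\partial D_{\underline X}$ and on the boundary maps over all of $D_{\underline X}$ live in cohomology with coefficients in $\pi_{\geq 2}$ of a graph, which vanishes. (This is the fibrewise, relative-boundary incarnation of the fact used repeatedly in Section~\ref{LongExactSequenceSection} that maps into a $K(F,1)$ extend over cells.) Combining the three reductions of the first paragraph with the resulting fibrewise homotopy equivalence, and invoking transitivity (Lemma~\ref{EquivalenceRelationLemma}), yields the homotopy equivalence $(\R\C(n),\partial)\simeq(|c|\times\C(n),\partial^{c}\times\mathrm{Id})$ over $U$ asserted by the Proposition.

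\emph{The main obstacle.} I expect the last step to be the genuinely hard part. Because the homeomorphism type of the fibre $c(x)(S^1)$ jumps as $x$ passes between cells of $\calF(n)$, the projection $\R\C(n)\to\C(n)$ is not a fibre bundle, so one cannot simply trivialize it as in the framed little discs case of Proposition~\ref{fDFibrewiseStructureProposition}; the inductive construction of the homotopy inverse and of all the coherence data must be pushed through each degeneration while respecting the boundary maps, and it is exactly this that makes the cacti case, as advertised, much more involved than its counterpart for framed little discs.
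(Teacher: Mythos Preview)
Your approach is genuinely different from the paper's, and while it is plausible in outline, it carries real gaps that the paper's argument sidesteps entirely.

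\textbf{What the paper does.} The paper never touches the cell structure of $\calF(n)$ and never builds a collapse map $|c(x)|\to|c|$. Instead it chooses, for each lobe $r$, a short arc $I_r\subset S^1$ whose image $\partial_r(I_r)$ avoids all intersection points and the global marked point, and sets $P\subset\R\C(n)|U$ to be the fibrewise complement of these arcs. Each fibre $P_d\subset|d|$ is then a tree, and the crucial step is a self-contained result (``configurations of orthogonal line segments'') showing that $P$ is \emph{fibrewise} contractible. Lemma~\ref{RelativeHomotopyEquivalenceLemma} then gives
\[
(\R\C(n)|U,\partial)\ \simeq\ \bigl((\R\C(n)|U)/P,\partial_1\bigr),
\qquad
(|c|\times U,\partial)\ \simeq\ \bigl((|c|/P_c)\times U,\partial_2\bigr),
\]
and since both quotients are the \emph{same} trivial bundle of wedges of circles $(\bigsqcup I_r)/(\bigsqcup\partial I_r)\times U$, the middle equivalence reduces to checking that the two induced maps from the $0$-th copy of $S^1$ are homotopic, which holds because both are families of based maps into $\bigvee S^1$ lying in a fixed homotopy class. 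No induction, no obstruction theory, and no explicit comparison of $|c(x)|$ with $|c|$ is needed.

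\textbf{Gaps in your plan.} First, your arc-collapse map $q_x\colon|c(x)|\to|c|$ is not well-defined as stated. The lobes of $c(x)$ and of $c$ are both full circles in $(S^1)^n$; nothing is ``absent'' from one that is present in the other. What actually changes as $x\to x_0$ is the location of the intersection points on each lobe, and to produce a continuous map into the specific subset $|c|\subset(S^1)^n$ you must simultaneously collapse the shrinking arcs \emph{and} reparametrize every lobe so that the surviving intersection points land exactly on those of $|c|$. You gesture at this with the $r_i(x)$, but the construction of $q_x$ itself already needs it, and you must check that the reparametrizations on different lobes are compatible at intersection points. Even on the bottom stratum $\interior D_{\underline X_0}$ your $\Phi$ is not literally a fibrewise homeomorphism: the fibres $|c(x)|$ for $x\neq x_0$ in that cell are different subsets of $(S^1)^n$, so some identification is already required.

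Second, the inductive step is underspecified. Definition~\ref{HomotopyEquivalenceDefinition} asks for a $6$-tuple $(\phi,\psi,G,H,K,L)$ together with two \emph{coherence} homotopies between $L\cdot(\psi\circ K)$ and $G\circ f_X$, and between $K\cdot(\phi\circ L)$ and $H\circ f_Y$. Your obstruction-theoretic sketch would have to extend all of $\psi,G,H,L$ and both coherences across each cell, relative to $\partial D_{\underline X}$ and to the boundary-map constraints; asphericity of the fibres kills the obstructions in degrees $\geq 2$, but you have not addressed the $\pi_1$-level compatibility needed to get started on each $1$-cell, nor set up the relative extension problem carefully enough to be sure the coherence conditions propagate.

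\textbf{Comparison.} Your route could probably be pushed through, but it reproduces by hand, cell by cell, what the paper obtains in one stroke by collapsing a fibrewise contractible subspace. The paper's trick of cutting one arc per lobe converts the variable combinatorics of the cactus into a fibrewise tree, whose contractibility is the only nontrivial input; your approach instead confronts that variable combinatorics directly.
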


\begin{corollary}
$\R\C(n)\to\C(n)$ is a quasifibration.
\end{corollary}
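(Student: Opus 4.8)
The plan is to deduce the corollary from Proposition~\ref{CFibrewiseStructureProposition} in exactly the way that the quasifibration property of $\Pi$ was deduced from Theorem~C: I will check that $\R\C(n)\to\C(n)$ is a quasifibration locally over $\C(n)$, with arbitrarily small distinguished neighbourhoods, and then appeal to Corollary~2.4 of \cite{\MayQuasi}. The only genuine content is packaged in Proposition~\ref{CFibrewiseStructureProposition}; everything else is formal.

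First I would fix $c\in\C(n)$ and invoke Proposition~\ref{CFibrewiseStructureProposition} to obtain an open neighbourhood $U$ of $c$ in $\C(n)$ over which the fibred spaces $(\R\C(n),\partial)$ and $(|c|\times\C(n),\partial\times\mathrm{Id})$ are homotopy equivalent in the sense of Definition~\ref{HomotopyEquivalenceDefinition}. Forgetting the homotopies that record the ``space under $A$'' structure, this equivalence consists in particular of maps $\phi\colon\R\C(n)|U\to U\times|c|$ and $\psi\colon U\times|c|\to\R\C(n)|U$ of spaces over $U$, together with fibrewise homotopies $\psi\circ\phi\simeq\mathrm{Id}$ and $\phi\circ\psi\simeq\mathrm{Id}$; that is, $\R\C(n)|U$ is fibrewise homotopy equivalent over $U$ to the trivial bundle $U\times|c|$.

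Next I would observe that $U\times|c|\to U$ is a trivial fibre bundle, hence a quasifibration, and that the quasifibration property passes along fibrewise homotopy equivalences. Concretely: $\phi$ and $\psi$ restrict over each point $d\in U$ to mutually inverse homotopy equivalences of the fibres $\rho_n^{-1}(d)\simeq\{d\}\times|c|$, and $\rho_n$ agrees on $U$ with the composite of $\phi$ with the projection $U\times|c|\to U$; comparing the long exact sequences of the pairs $(\R\C(n)|U,\rho_n^{-1}(d))$ and $(U\times|c|,\{d\}\times|c|)$ via the five lemma then shows that $\R\C(n)|U\to U$ is again a quasifibration. This fibrewise-homotopy-invariance of the quasifibration property is the one point where a little care is needed, and it is where I expect the only real bookkeeping to lie; it is standard, and one may either quote it or argue it by the five-lemma comparison just indicated.

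Finally, since Proposition~\ref{CFibrewiseStructureProposition} applies to every $c\in\C(n)$, and since restricting the fibrewise homotopy equivalence above to any smaller open neighbourhood of $c$ preserves the conclusion, every point of $\C(n)$ has arbitrarily small open neighbourhoods over which $\R\C(n)\to\C(n)$ is a quasifibration. Corollary~2.4 of \cite{\MayQuasi} then yields that $\R\C(n)\to\C(n)$ is itself a quasifibration, which completes the argument.
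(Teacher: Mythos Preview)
Your proof is correct and follows essentially the same route as the paper: extract from Proposition~\ref{CFibrewiseStructureProposition} the fibrewise homotopy equivalence $\R\C(n)|U\simeq U\times|c|$ (forgetting the relative structure $K$, $L$), deduce that $\rho_n$ is locally a quasifibration, and invoke the local-to-global principle. The paper's version is terser---it simply asserts that local fibrewise homotopy equivalence to a product gives a local quasifibration---whereas you spell out the five-lemma comparison and the ``arbitrarily small neighbourhoods'' point, but the argument is the same.
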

\begin{proof}
In Proposition~\ref{CFibrewiseStructureProposition} the local existence of the maps $\phi$ and $\psi$, and the homotopies $G$ and $H$, show that $\R\C(n)\to\C(n)$ is locally fibrewise homotopy equivalent to a product.
It is therefore locally a quasifibration.
Since maps that are locally quasifibrations are themselves quasifibrations, the claim follows.
\end{proof}

\subsection{Proof of Proposition~\ref{CFibrewiseStructureProposition}}

Fix $c\in\C(n)$.
For each $r\in\{1,\ldots,n\}$ choose closed intervals $I_r\subset \bar I_r \subset S^1$ such that $I_r$ lies in the interior of $\bar I_r$, such that $\partial_r(\bar I_r)$ does not meet the image of any incoming boundary maps besides $\partial_r$, and such that $\partial_\rmout(\bullet)$ does not lie in any $\bar I_r$.
Since the lobe coordinates of a cactus depend continuously on the cactus, we may choose a neighbourhood $U$ of $c$
over which the above properties of $I_r\subset \bar I_r$ still hold.
Set $P=\bigsqcup_r \partial_r(S^1\setminus\mathrm{int}(I_r))\subset\R\C(n)|U$.

Thus $P$ is a fibrewise subspace of $\R\C(n)|U$ whose fibre over $d$ is a copy of $|d|$ with an interval removed from each lobe; in other words, each fibre is a tree.
The {fibrewise} quotient $\R\C(n)/P$ is given in each fibre by the bouquet of circles $(\bigsqcup I_r)/(\bigsqcup \partial I_r)$.
It is therefore natural to expect that $\R\C(n)|U \to (\R\C(n)|U)/ P$ is a fibrewise homotopy equivalence with the product $(\bigsqcup I_r)/(\bigsqcup \partial I_r)\times U$.
Our proof of Proposition~\ref{CFibrewiseStructureProposition} will proceed along these lines.

Form the fibrewise spaces $\left( (\R\C(n)|U)/P,\partial_1\right)$ and $\left( (|c|/P_c)\times U,\partial_2 \right)$ under $\bigsqcup_{i=0}^n S^1\times U$, where $\partial_1$ and $\partial_2$ are the composites
\begin{gather*}
\partial_1\colon \textstyle\bigsqcup_{i=0}^n S^1\times U\xrightarrow{\partial}\R\C(n)|U\to (\R\C(n)|U)/P,\\
\partial_2\colon \textstyle\bigsqcup_{i=0}^n S^1\times U\xrightarrow{\partial\times\mathrm{Id}} |c|\times U\to (|c|/P_c)\times U.
\end{gather*}
We will construct three homotopy equivalences,
\begin{equation}\label{HomotopyEquivalencesEquation}
\left( \R\C(n)|U,\partial\right)
\simeq
\left( (\R\C(n)|U)/P,\partial_1\right)
\simeq
\left( (|c|/P_c)\times U,\partial_2 \right)
\simeq
\left( |c|\times U,\partial\right).
\end{equation}
Since homotopy equivalence is a transitive relation, this will suffice to prove the proposition.

There are fibrewise pushout squares
\begin{equation}\label{LocalFibrewisePushoutSquare}
\xymatrix{
\bigsqcup_r U\times \partial I_r\ar[r]\ar[d] & P\ar[d] \\
\bigsqcup_r U\times I_r\ar[r] & \R\C(n)|U,
}
\qquad\qquad
\xymatrix{
\bigsqcup_r U\times \partial I_r\ar[r]\ar[d] & P_c\times U\ar[d] \\
\bigsqcup_r U\times I_r\ar[r] & |c|\times U.
}
\end{equation}
In both cases the horizontal maps are the restrictions of $\bigsqcup_{r=1}^n\partial_r$ and the right-hand map is the inclusion.
The left hand maps of these squares are cofibrations, so the same is true for the right hand maps.
We make the following claim:
\begin{enumerate}
\item \label{LocalLemmaTwo}
$P$ is fibrewise contractible.
\end{enumerate}
From the claim and from Lemma~\ref{RelativeHomotopyEquivalenceLemma} we obtain the first and last homotopy equivalence of \eqref{HomotopyEquivalencesEquation}.
It remains to find the middle homotopy equivalence.
The diagrams \eqref{LocalFibrewisePushoutSquare} also show that the fibrewise spaces $(\R\C(n)|U)/P$ and $(|c|/P_c)\times U$ are fibrewise isomorphic, and moreover the isomorphism is such that the triangle 
\[
\xymatrix{
{} & \bigsqcup_{i=1}^n S^1\times U \ar[dr]^{\partial_2|}  \ar[dl]_{\partial_1|} & {} \\
(\R\C(n)|U)/P\ar@{<->}[rr]_\cong   &  & (|c|/P_c)\times U 
}
\]
commutes.
The triangle says nothing about the value of $\partial_1$ and $\partial_2$ on the first cofactor of $\bigsqcup_{i=0}^n S^1\times U$.
Now we make a further claim.
\begin{enumerate}
\addtocounter{enumi}{1}
\item \label{LocalLemmaThree}
The composites
\begin{align*}
S^1\times U\hookrightarrow\textstyle\bigsqcup_{i=0}^n S^1\times U\xrightarrow{\partial_1}&(\R\C(n)|U)/P\cong (|c|/P_c)\times U, \\
S^1\times U\hookrightarrow\textstyle\bigsqcup_{i=0}^n S^1\times U\xrightarrow{\partial_2}&(|c|/P_c)\times U
\end{align*}
are homotopic; here the first map is the inclusion of the $0$-th cofactor.
\end{enumerate}
From this claim and from the commutative triangle it follows that $\left( (\R\C(n)|U)/P,\partial_1\right)$ is isomorphic to $\left( (|c|/P_c)\times U,\partial'_2 \right)$, where $\partial'_2$ is some map homotopic to $\partial_2$, so that $\left( (|c|/P_c)\times U,\partial'_2 \right)$ is itself homotopy equivalent to $\left( (|c|/P_c)\times U,\partial_2 \right)$.
Thus $\left( (\R\C(n)|U)/P,\partial_1\right)$ is homotopy equivalent to $\left( (|c|/P_c)\times U,\partial_2 \right)$, as required.

It remains to prove our claims.
The second claim states that two maps $S^1\times U\to (|c|/P_c)\times U$ are homotopic.
By construction, both are families (parameterized by $U$) of \emph{based} maps from $S^1$ to a bouquet of circles, lying in the same homotopy class.
The claim follows since the space of based maps $S^1\to\bigvee S^1$ in a fixed homotopy class is contractible.

The first claim states that $P$ is fibrewise contractible.
Note that the fibres of $P$ are connected.
For, given $d\in U$, any two points of $|d|$ can be joined by a path that does not pass through any of the $I_r$.
Let us assume without loss that each $I_r\subset S^1$ is equal to $[1/2,1]\subset S^1$.
Then the lobe coordinates are maps $U\to [0,1/2]^{n-1}$ and the $\partial_r|I_r$ are maps $[0,1/2]\to[0,1/2]^n$.
Rescaling $[0,1/2]$ to $[0,1]$ we see that the fibred space $P\to U$ is a \emph{configuration of orthogonal line segments} in the sense described in the next section.
The second claim then follows from Proposition~\ref{OrthogonalLineSegmentsProposition} below.

\subsection{Configurations of orthogonal line segments}

Let $A$ be a topological space.
Suppose we are given functions $\mathbf{x}^i\colon A\to\mathbb{R}^{n-1}$ for $i=1,\ldots,n$.
Write these as $\mathbf{x}^i=(x^i_1,\ldots,x^i_{i-1},x^i_{i+1},\ldots,x^i_n)$.
We obtain functions $\psi^i\colon A\times I\to\mathbb{R}^n$ given by
\[(a,t)\mapsto (x^i_1(a),\ldots,x^i_{i-1}(a),t,x^i_{i+1}(a),\ldots,x^i_n(a)).\]
Assume that for each $a\in A$ the space $\bigcup_i \psi^i(\{a\}\times I)$ is connected.
Define $Q\to A$ to be the fibrewise space $Q=\bigsqcup_i\psi^i(A\times I)\subset A\times\mathbb{R}^{n}$.
We refer to $Q$ as a \emph{configuration of orthogonal line segments} in $\mathbb{R}^n$.

\begin{proposition}\label{OrthogonalLineSegmentsProposition}
$Q$ is fibrewise contractible.
\end{proposition}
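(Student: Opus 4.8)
The plan is to exhibit a fibrewise strong deformation retraction of $Q$ onto a section. Write $L_i(a)=\psi^i(\{a\}\times I)$, so that the fibre of $Q$ over $a\in A$ is $Q_a=\bigcup_{i=1}^n L_i(a)$; the segment $L_i(a)$ is parallel to the $i$-th coordinate axis, and the remaining coordinates of its points are the entries of $\mathbf{x}^i(a)$. First I would record two structural facts. \emph{(i)} $Q_a\subseteq[0,1]^n$: projecting to the $j$-th coordinate sends $L_j(a)$ onto $[0,1]$ and each $L_i(a)$ with $i\neq j$ to the single point $x^i_j(a)$, so connectedness of $Q_a$ forces its image, which is $[0,1]$ together with points isolated unless they lie in $[0,1]$, to be connected, i.e. $x^i_j(a)\in[0,1]$ for all $i\neq j$. \emph{(ii)} $Q_a$ is a tree and $L_1(a)$ is a subtree: two distinct segments $L_i(a),L_j(a)$ meet in at most one point, since equating a point of one with a point of the other forces the $n-2$ shared coordinates to agree and then pins down the remaining two; and no sub-collection of the segments encloses a cycle, for an embedded circle built from segments with pairwise distinct axis-directions, after projecting to the direction $i_0$ of one of its segments $L_{i_0}$, becomes a loop in $\mathbb{R}$ whose coordinate changes only along $L_{i_0}$ — which is traversed once — hence is monotone there and constant elsewhere and cannot close up unless $L_{i_0}$ is entered and left at the same point, in which case a shorter cycle omitting $L_{i_0}$ remains; and a cycle of two segments is impossible by the previous sentence. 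Thus $Q_a$ is a compact connected $1$-dimensional complex without cycles.

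The contraction then proceeds in two fibrewise stages. Stage one pushes $Q$ onto $L_1$: over each $a$, apply the standard deformation retraction of the tree $Q_a$ onto the subtree $L_1(a)$, moving at time $\tau$ a point $x$ a fraction $\tau$ of the way along the unique arc from $x$ to its nearest point $r_a(x)\in L_1(a)$. Stage two slides $L_1$, which is fibrewise homeomorphic to $A\times I$ via $\psi^1$, onto the section $s(a)=\psi^1(a,0)$. Concatenating the two stages gives a fibrewise contraction of $Q$ onto $s(A)$; since all the spaces in sight are fibrewise compact Hausdorff this takes place in the fibrewise category of \cite{\James}.

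The substance of the argument is the continuity of stage one in $a$, because the combinatorial type of the tree $Q_a$ varies with $a$. The key point I would isolate is that, \emph{because $Q_a$ is a tree for every} $a\in A$, no intersection $L_i(a)\cap L_j(a)$ can appear or vanish discontinuously: if $L_i(a_0)\cap L_j(a_0)\neq\emptyset$ while $L_i(a)\cap L_j(a)=\emptyset$ for a net $a\to a_0$, then, passing to a subnet on which the connecting arc in $Q_a$ has a fixed combinatorial type (each of its incidence conditions being a closed condition on $a$), the fibre $Q_{a_0}$ would contain both that arc and the point of $L_i(a_0)\cap L_j(a_0)$, producing a cycle. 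From this one deduces that the tree metric $d_a$ is jointly continuous on $\{(a,x,y):x,y\in Q_a\}$; that $r_a$ is continuous — on each $L_i(a)$ with $i\neq 1$ it is constant, equal to the endpoint $b_i(a)\in L_1(a)$ of the unique shortest arc joining the subtrees $L_i(a)$ and $L_1(a)$, and writing $b_i(a)=\psi^1(a,\beta_i(a))$ the number $\beta_i(a)$ is the minimiser of the continuous and strictly unimodal function $t\mapsto d_a(\psi^1(a,t),L_i(a))$, hence continuous in $a$; and that the arc-length reparametrised push-in homotopy of stage one is continuous.

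I expect this "continuity across degenerations" to be the only real obstacle. Granted it, the two-stage contraction is routine, and fibrewise contractibility of $Q$ follows; this is exactly what is needed to supply the fibrewise contractible space $T=P$ in the application of Lemma~\ref{RelativeHomotopyEquivalenceLemma} in the proof of Proposition~\ref{CFibrewiseStructureProposition}.
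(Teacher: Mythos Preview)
Your two-stage retraction is a legitimate alternative to the paper's argument, which instead establishes \emph{fibrewise convexity}: it constructs, for each ordered pair $(p,q)$ in a fibre, a canonical ``adapted path'' from $p$ to $q$ (moving at a common speed $v$ on each segment it traverses, with a fixed sign on each segment), proves existence and uniqueness by induction on \emph{leaves} (segments meeting exactly one other segment), and proves continuity over $A$ by the same leaf induction, using that the locus $A_i\subseteq A$ on which $i$ is a leaf is closed.  The paper's argument is symmetric in the segments and packages all the continuity into one induction; yours privileges $L_1$ and splits the problem into a tree retraction followed by a trivial linear contraction.

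There is, however, a gap in your continuity step.  Your key lemma---that an intersection $L_i\cap L_j$ present at $a_0$ but absent along a net $a\to a_0$ forces a cycle in $Q_{a_0}$---fails as stated, because the ``cycle'' can be degenerate.  Take $n=3$, $L_1=I\times\{0\}\times\{0\}$, $L_2=\{0\}\times I\times\{0\}$, $L_3(a)=\{a\}\times\{0\}\times I$.  Then $L_2\cap L_3\neq\emptyset$ only at $a=0$, and the connecting arc $L_2\to(0,0,0)\to L_1\to(a,0,0)\to L_3$ for $a\neq 0$ collapses at $a=0$ so as to pass through the very point $L_2(0)\cap L_3(0)=(0,0,0)$; no nontrivial cycle is produced and no contradiction results.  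Your deduction of joint continuity of $d_a$ from this lemma is therefore unsupported.

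The conclusion you want is nonetheless true, and for the metric there is a one-line replacement: since a geodesic in $Q_a$ meets each $L_k$ in a single (possibly empty) arc along which only the $k$-th coordinate changes, and is constant in the $k$-th coordinate elsewhere, one has $d_a(x,y)=\sum_k|x_k-y_k|=\|x-y\|_1$, which is manifestly jointly continuous; your formula $\beta_i(a)=x^i_1(a)$ then drops out immediately.  What remains---continuity of the push-in homotopy, i.e.\ of the geodesic from $x$ to $r_a(x)$ parametrised by arc-length fraction---is exactly the content of the paper's adapted-path construction, and at that point the two arguments converge.
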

\begin{proof}

Since $Q$ obviously admits sections, it will suffice to show that it is fibrewise convex, or in other words that there is a fibrewise $h\colon Q\times_A Q\times [0,1]\to Q$ satisfying $h((p,q),0)=p$ and $h((p,q),1)=q$.

For the beginning of this proof let us assume that $A$ is a single point and omit it from the notation.

Note that the $i$-th coordinates of the points where $\psi^i[0,1]$ meets other $\psi^j[0,1]$ are exactly the $x^j_i$.

We say that $i\in\{1,\ldots,n\}$ is a \emph{leaf} if $\psi^i[0,1]$ meets exactly one other $\psi^j[0,1]$.
Leaves exist so long as $n\geqslant 2$.
This is clear when $n=2$, since $Q$ is connected.
We prove the general case by induction.
Let $\pi\colon\mathbb{R}^{n-1}\to\mathbb{R}^{n-2}$ be the map that forgets the final coordinate.
Then $\pi\mathbf{x}^1,\ldots,\pi\mathbf{x}^{n-1}$ define a configuration of line segments in $\mathbb{R}^{n-1}$, and so have a leaf which without loss is $(n-1)$.
Then the ${x}^1_{n-1},\ldots,{x}^{n-2}_{n-1}$ coincide.
So if $(n-1)$ is not a leaf then ${x}^n_{n-1}$ must differ from the common value of the $x^i_{n-1}$, and it follows that $n$ is a leaf.
In either case, a leaf exists.

An \emph{adapted path} $\gamma\colon[0,1]\to Q$ is one for which there is $v\geqslant 0$ and a decomposition of $[0,1]$ into intervals $[\alpha,\beta]$ on which $\gamma$ has the form $t\mapsto(x^i_1,\ldots,t_0+\epsilon_i v t,\ldots,x^i_n)$ for some value of $i$, some $t_0$, and some $\epsilon_i\in\{\pm 1\}$ depending only on $i$.

Any two points $x,y$ of $Q$ can be joined by a unique adapted path.
This is clear when $n=1$ and $n=2$, and in general is proved by induction.
For we can assume that $n$ is a leaf.
The claim is then immediate if $x$ and $y$ both lie in $\psi^n[0,1]$, and follows by induction if they both lie in $\bigcup_{i\leqslant n-1}\psi^i[0,1]$.
In the final case assume without loss that $x$ lies in $\psi^n[0,1]$ and that $y$ lies in $\bigcup_{i\leqslant n-1}\psi^i[0,1]$.
Write $x'$ for the point where $\psi^n[0,1]$ meets $\bigcup_{i\leqslant n-1}\psi^i[0,1]$.
Then an adapted path from $x'$ to $y$ exists by induction, and can easily be modified to produce an adapted path from $x$ to $y$.
Uniqueness is proved by a similar induction.

Now remove the restriction that $A$ is a single point.
By applying the results just obtained in each fibre, we obtain a function $h\colon Q\times_A Q\times [0,1]\to Q$ determined by the fact that each $h(x,y,-)$ is the adapted path from $x$ to $y$.
We must show that it is continuous.
For fixed $i\in\{1,\ldots,n\}$ the subset $A_i$ of $A$ on which $i$ is a leaf is closed, so it suffices to prove continuity over $A_i$.
Continuity over $A_i$ follows from an argument similar to the construction of adapted paths.
This completes the proof.
\end{proof}

\section{Proof of Theorem~C}\label{QuasiFibrationProofSection}

We can now complete the proof of Theorem~C.
We continue to write $\partial$ for the combined boundary map $\partial_\rmout\sqcup\bigsqcup_{i=1}^n\partial_i$.

\begin{definition}
Let $\calE^\circ_h(n)$ denote the space of quadruples $(a,c,f,H)$ where
$a\in\fD^\circ(n)$,
$c\in\C(n)$,
$f\colon |a|\to|c|$ is a homotopy equivalence,
and $H\colon\bigsqcup_{i=0}^n S^1\times[0,1]\to|c|$ is a homotopy from $f\circ\partial$ to $\partial$.
We topologize $\calE^\circ_h(n)$ as a subspace of the fibrewise mapping space.
Taking the constant homotopy gives an inclusion $\calE^\circ(n)\hookrightarrow\calE^\circ_h(n)$.
\end{definition}

Fix $a\in\fD(n)$ and $c\in \C(n)$.
Recall that $\Map_\partial(|a|,|c|)$ denotes the fibre of $\calE(n)$ over $(a,c)$.
In other words, it is the space of homotopy equivalences $f\colon |a|\to|c|$ satisfying $f\circ\partial=\partial$.

\begin{definition}
Given $a\in\fD^\circ(n)$ and $c\in\C(n)$, let $\Map^h_\partial(|a|,|c|)$ denote the fibre of $\calE^\circ(n)$ over $(a,c)$.
In other words, it is the space of pairs $(f,H)$ with $f\colon |a|\to|c|$ a homotopy equivalence and $H\colon [0,1]\times\bigsqcup_{i=0}^n S^1\to|c|$ a homotopy from $f\circ\partial$ to $\partial$.
It is equipped with the compact open topology.
Taking the constant homotopy gives an inclusion  $\Map_\partial(|a|,|c|)\hookrightarrow\Map_\partial^h(|a|,|c|)$.
\end{definition}

The proof of Theorem~A is completed by the following three propositions.

\begin{proposition}\label{ProofCompletionPropositionOne}
The inclusion $\calE^\circ(n)\hookrightarrow\calE^\circ_h(n)$ is a fibrewise homotopy-equivalence of spaces over $\fD^\circ(n)\times\C(n)$. 
\end{proposition}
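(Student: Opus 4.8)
The plan is to recognize this as an instance of the fibrewise version of Proposition~\ref{StrictToHomotopyProposition}. Write $B=\fD^\circ(n)\times\C(n)$ and let $\pi_1,\pi_2$ denote its two projections onto $\fD^\circ(n)$ and $\C(n)$. Pulling the realization systems back to $B$ gives fibrewise spaces $\pi_1^\ast\R\fD^\circ(n)$ and $\pi_2^\ast\R\C(n)$ over $B$, each carrying its combined boundary map from $\bigsqcup_{i=0}^n S^1\times B$; regarded as spaces under $\bigsqcup_{i=0}^n S^1\times B$, these are precisely the data to which Section~\ref{MappingSpacesSection} applies. With these identifications, $\calE^\circ(n)$ becomes the fibrewise space of maps relative to $\bigsqcup_{i=0}^n S^1\times B$ whose underlying map is a homotopy equivalence, $\calE^\circ_h(n)$ becomes the corresponding fibrewise space of maps homotopy-relative to $\bigsqcup_{i=0}^n S^1\times B$, and the inclusion in the statement is exactly the inclusion of Proposition~\ref{StrictToHomotopyProposition}, restricted to the subspaces of homotopy equivalences.

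As noted just before Proposition~\ref{StrictToHomotopyProposition}, that result and its proof carry over to the fibrewise setting with `compact', `Hausdorff' and `compact-open' replaced by their fibrewise analogues and all products replaced by fibrewise products over $B$; since $\R\fD^\circ(n)$ and $\R\C(n)$ are fibrewise compact Hausdorff, the adjunction arguments there apply without change. It therefore suffices to verify the hypothesis of the fibrewise Proposition~\ref{StrictToHomotopyProposition}, namely that for every fibrewise space $K$ over $B$ the map
\[\partial\times_B\mathrm{Id}_K\colon\Bigl(\textstyle\bigsqcup_{i=0}^n S^1\times B\Bigr)\times_B K\longrightarrow\pi_1^\ast\R\fD^\circ(n)\times_B K\]
has the fibrewise homotopy extension property.

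This is supplied by Proposition~\ref{fDFibrewiseStructureProposition}(1), which extends $\partial$ to a fibrewise open embedding $\bigsqcup_{i=0}^n S^1\times[0,1)\times\fD^\circ(n)\hookrightarrow\R\fD^\circ(n)$. Pulling this collar back along $\pi_1$ exhibits the image of $\partial$ in $\pi_1^\ast\R\fD^\circ(n)$ as a fibrewise closed subspace carrying an open fibrewise collar, and taking the fibrewise product with $K$ carries that collar to an open fibrewise collar of the image of $\partial\times_B\mathrm{Id}_K$. A fibrewise closed subspace with an open fibrewise collar satisfies the fibrewise homotopy extension property (see Appendix~\ref{FibrewiseAppendix}), so the hypothesis holds. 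The last sentence of Proposition~\ref{StrictToHomotopyProposition} then guarantees that the resulting fibrewise homotopy equivalence restricts to the subspaces where $f$ is a homotopy equivalence, and the proposition follows.

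The only delicate point is bookkeeping: one must consistently pull the realization systems back from $\fD^\circ(n)$ and $\C(n)$ to the common base $B$, and must check that the open collar of Proposition~\ref{fDFibrewiseStructureProposition}(1) — and hence the associated fibrewise homotopy extension property — survives both this pullback and the subsequent fibrewise product with an arbitrary $K$. Neither step is hard, but both are needed to put the hypothesis of the fibrewise Proposition~\ref{StrictToHomotopyProposition} into exactly the form it requires.
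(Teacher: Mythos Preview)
Your proposal is correct and follows exactly the same approach as the paper's own proof, which simply reads: ``This is an application of the fibrewise version of Proposition~\ref{StrictToHomotopyProposition}; the assumption holds by the first part of Proposition~\ref{fDFibrewiseStructureProposition}.'' You have merely unpacked this one-line argument in detail, correctly identifying both the relevant fibrewise mapping-space result and the source of the required cofibration hypothesis.
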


\begin{proposition}\label{ProofCompletionPropositionTwo}
Every pair $(a,c)\in\fD^\circ(n)\times\C(n)$ has a neighbourhood $W$ over which there is a fibrewise homotopy equivalence
\[\calE^\circ_h(n)\simeq\Map^h_\partial(|a|,|c|)\times W.\]
\end{proposition}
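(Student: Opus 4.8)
The plan is to assemble Proposition~\ref{ProofCompletionPropositionTwo} from the local fibrewise structure of $\R\fD^\circ(n)$ and $\R\C(n)$ obtained in Sections~\ref{RfDFibrewiseSection} and~\ref{RCFibrewiseSection}, together with the invariance of homotopy-relative mapping spaces under homotopy equivalence of the source and target, Proposition~\ref{MappingSpacesFunctorialProposition}, applied in the fibrewise setting over a suitable neighbourhood $W$.

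First I would fix $(a,c)\in\fD^\circ(n)\times\C(n)$ and produce $W$. By Proposition~\ref{fDFibrewiseStructureProposition} there is an open neighbourhood $U_1$ of $a$ in $\fD^\circ(n)$ over which $\R\fD^\circ(n)$ is fibrewise isomorphic to $U_1\times|a|$, compatibly with the boundary maps; so, regarded as fibred spaces \emph{under} $\bigsqcup_{i=0}^n S^1\times U_1$, the pairs $(\R\fD^\circ(n)|U_1,\partial)$ and $(|a|\times U_1,\partial\times\mathrm{Id})$ are isomorphic relative to $\bigsqcup_{i=0}^n S^1\times U_1$, hence in particular homotopy equivalent in the sense of Definition~\ref{HomotopyEquivalenceDefinition}. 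By Proposition~\ref{CFibrewiseStructureProposition} there is an open neighbourhood $U_2$ of $c$ in $\C(n)$ over which $(\R\C(n),\partial)$ and $(|c|\times\C(n),\partial\times\mathrm{Id})$ are homotopy equivalent as fibred spaces under $\bigsqcup_{i=0}^n S^1\times U_2$. Set $W=U_1\times U_2$ and pull everything back along the two projections; this gives homotopy equivalences $(\pi_1^\ast\R\fD^\circ(n),\partial)\simeq(|a|\times W,\partial\times\mathrm{Id})$ and $(\pi_2^\ast\R\C(n),\partial)\simeq(|c|\times W,\partial\times\mathrm{Id})$ of fibred spaces over $W$, each under $\bigsqcup_{i=0}^n S^1\times W$.

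The key step is then to recognise $\calE^\circ_h(n)|W$ as a fibrewise homotopy-relative mapping space and apply Proposition~\ref{MappingSpacesFunctorialProposition}. By construction, $\calE^\circ_h(n)|W$ is exactly the subspace of the fibrewise version of the space $\Map^h_f$ of Section~\ref{MappingSpacesSection}, built from the pairs $(\pi_1^\ast\R\fD^\circ(n),\partial)$ and $(\pi_2^\ast\R\C(n),\partial)$ over $W$ (with the role of $A$ played by $\bigsqcup_{i=0}^n S^1\times W$), cut out by the condition that the map component $f$ be a homotopy equivalence. Applying the fibrewise form of Proposition~\ref{MappingSpacesFunctorialProposition} first in the target variable, to replace $(\pi_2^\ast\R\C(n),\partial)$ by $(|c|\times W,\partial\times\mathrm{Id})$, and then in the source variable, to replace $(\pi_1^\ast\R\fD^\circ(n),\partial)$ by $(|a|\times W,\partial\times\mathrm{Id})$ — using that the proposition restricts to a homotopy equivalence between the subspaces of homotopy equivalences — we obtain a fibrewise homotopy equivalence over $W$
\[
\calE^\circ_h(n)|W\ \simeq\ \Map^h_\partial(|a|\times W,\,|c|\times W),
\]
where the right-hand side denotes the fibrewise homotopy-relative mapping space of these \emph{constant} fibred spaces over $W$ (restricted to homotopy equivalences in the $f$-variable). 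Since $|a|\times W$ and $|c|\times W$ are constant over $W$ and the boundary map on each is a product, the right-hand side is literally the trivial bundle $\Map^h_\partial(|a|,|c|)\times W$, which yields the statement.

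The only thing requiring care is the fibrewise bookkeeping: one must check that the trivialization of Proposition~\ref{fDFibrewiseStructureProposition} and the homotopy equivalence of Proposition~\ref{CFibrewiseStructureProposition} are genuinely statements about fibred spaces under $\bigsqcup_{i=0}^n S^1\times W$, that they pull back correctly to $W=U_1\times U_2$, and that $\calE^\circ_h(n)|W$ really carries the fibrewise compact-open topology making the identification with the fibrewise $\Map^h_\partial$ a homeomorphism. Granting the fibrewise generalizations promised in the Note of Section~\ref{MappingSpacesSection}, this is all routine; the substantive work has already been done in the preceding three sections, so I do not expect a genuine obstacle here beyond organising the argument cleanly.
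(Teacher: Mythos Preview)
Your proposal is correct and follows essentially the same route as the paper: choose neighbourhoods $U_1$ and $U_2$ via Propositions~\ref{fDFibrewiseStructureProposition} and~\ref{CFibrewiseStructureProposition}, set $W=U_1\times U_2$, apply the fibrewise form of Proposition~\ref{MappingSpacesFunctorialProposition} to replace source and target by the constant pairs, and then identify the resulting fibrewise homotopy-relative mapping space of constant bundles with the product $\Map^h_\partial(|a|,|c|)\times W$ (the paper cites the last part of Proposition~\ref{FibrewiseMappingProposition} for this identification, which you treat as evident).
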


\begin{proposition}\label{ProofCompletionPropositionThree}
The inclusion $\Map_\partial(|a|,|c|)\hookrightarrow\Map_\partial^h(|a|,|c|)$ is a homotopy equivalence.
\end{proposition}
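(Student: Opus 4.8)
The plan is to read this off from Proposition~\ref{StrictToHomotopyProposition}. I would apply that proposition with the fixed compact Hausdorff space $A$ taken to be $\bigsqcup_{i=0}^n S^1$, with $(X,f_X)=(|a|,\partial)$ and with $(Y,f_Y)=(|c|,\partial)$, where $\partial$ denotes the combined boundary map in each case. Both $|a|$ and $|c|$ are compact Hausdorff, being closed subspaces of $D^2$ and of $(S^1)^n$, and a map $g\colon|a|\to|c|$ satisfies the relative condition $g\circ f_X=f_Y$ precisely when $g\circ\partial=\partial$. Hence $\Map_\partial(|a|,|c|)$ and $\Map_\partial^h(|a|,|c|)$, as defined in this section, are exactly the subspaces of the mapping spaces of Proposition~\ref{StrictToHomotopyProposition} consisting of those elements for which $g$ is a homotopy equivalence; so the concluding sentence of that proposition --- which asserts that the inclusion restricts to a homotopy equivalence between those subspaces --- will give the result once its hypothesis is checked.

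The only point that needs verification, then, is the hypothesis of Proposition~\ref{StrictToHomotopyProposition}: that for every space $K$ the map $\partial\times\mathrm{Id}_K\colon A\times K\to|a|\times K$ has the homotopy extension property. I would obtain this from part~1 of Proposition~\ref{fDFibrewiseStructureProposition}. Restricting the fibrewise open embedding provided there to the fibre over $a\in\fD^\circ(n)$ gives an open embedding $A\times[0,1)\hookrightarrow|a|$ extending $\partial$, so that $A$ is collared in $|a|$; multiplying this collar by $K$ produces an open embedding $(A\times K)\times[0,1)\hookrightarrow|a|\times K$ extending $\partial\times\mathrm{Id}_K$, so $A\times K$ is collared, hence a closed cofibration, in $|a|\times K$, and therefore $\partial\times\mathrm{Id}_K$ has the homotopy extension property. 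It is here, and only here, that the restriction $a\in\fD^\circ(n)$ is used: for a general $a\in\fD(n)$ a little disc could meet the boundary of the big disc, the incoming and outgoing boundary circles would then overlap in $|a|$, and $\partial$ would fail even to be an embedding.

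I do not expect a genuine obstacle here, since everything substantive has already been isolated in Propositions~\ref{StrictToHomotopyProposition} and~\ref{fDFibrewiseStructureProposition}. The only mildly delicate step is the purely formal observation that a collar survives multiplication by an arbitrary space $K$ and still yields the homotopy extension property; but this is just the standard facts that a collared inclusion is a closed cofibration and that the product of a cofibration with an identity map is again a cofibration.
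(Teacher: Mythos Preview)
Your proposal is correct and follows exactly the paper's approach: the paper's proof is the single sentence ``Proposition~\ref{ProofCompletionPropositionThree} is also proved by an application of Proposition~\ref{StrictToHomotopyProposition},'' with the hypothesis implicitly verified (as in the proof of Proposition~\ref{ProofCompletionPropositionOne} just above it) via the first part of Proposition~\ref{fDFibrewiseStructureProposition}. You have simply spelled out the details that the paper leaves implicit.
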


\begin{proof}[Proof of Proposition~\ref{ProofCompletionPropositionOne}]
This is an application of the fibrewise version of Proposition~\ref{StrictToHomotopyProposition}; the assumption holds by the first part of Proposition~\ref{fDFibrewiseStructureProposition}.
\end{proof}

Proposition~\ref{ProofCompletionPropositionThree} is also proved by an application of Proposition~\ref{StrictToHomotopyProposition}.

\begin{proof}[Proof of Proposition~\ref{ProofCompletionPropositionTwo}]
Let $U$ be a neighbourhood of $a$ over which 
$(\R\fD^\circ(n),\partial)$ and $(|a|\times\fD^\circ(n),\partial\times\mathrm{Id})$ are homotopy equivalent as in Proposition~\ref{fDFibrewiseStructureProposition}, and
let $V$ be a neighbourhood of $c$ over which $(\R\C(n),\partial)$ and $(|c|\times\C(n),\partial\times\mathrm{Id})$ are homotopy equivalent, as in Proposition~\ref{CFibrewiseStructureProposition}.
Set $W=U\times V$.

Write $\calE'$ for the space of quadruples $((b,d),f,H)$, where $(b,d)\in W$, $f\colon |a|\to |c|$ is a homotopy equivalence, and $H\colon f\circ\partial\Rightarrow \partial$ is a homotopy.
Topologize $\calE'$ as a subspace of the fibrewise mapping space.
Proposition~\ref{MappingSpacesFunctorialProposition}, together with the previous paragraph, implies that $\calE^\circ_h(n)|W$ is  fibre homotopy equivalent to $\calE'$.
Part 8 of Proposition~\ref{FibrewiseMappingProposition} shows that $\calE'$ is fibrewise homeomorphic to the space given in the statement.
This completes the proof.
\end{proof}

\appendix

\section{Fibrewise topology}\label{FibrewiseAppendix}

This appendix will recall some basic notions of fibrewise topology, and establish some facts regarding the fibrewise mapping space.
We refer almost entirely to the book of James \cite{\James}.

\begin{definition}
Let $B$ be a space.
Then a \emph{fibred space} or \emph{fibrewise space} over $B$ is a space $X$ with a map $p\colon X\to B$ called the \emph{projection}.
The \emph{fibre} of $X$ over $b\in B$ is $X_b=p^{-1}(b)$.
A \emph{fibrewise map} $\phi\colon X\to Y$ between spaces fibred over $B$ is  a map $X\to Y$ that commutes with the projections.
\end{definition}

\begin{definition}
Given a map $f\colon A\to B$ and a space $X$ fibred over $B$, the pullback $f^\ast X$ is the fibred space $f^\ast X\to A$ given by $X\times_{p,f}A$ with its natural projection.
When $A\hookrightarrow B$ is the inclusion of a subspace we will write the pullback as $X_A$ or $X|_A$.
\end{definition}

\begin{definition}
Given $X,Y$ fibred over $B$, then $X\sqcup Y$ and $X\times_B Y$ become spaces fibred over $B$ in the obvious way.
The \emph{fibrewise pushout} of a diagram $Z\leftarrow X\rightarrow Y$ of spaces and maps fibred over $B$ is the ordinary pushout $Y\cup_X Z$, regarded as a space fibred over $B$.
A square
\[\xymatrix{
X\ar[r]\ar[d] & Y\ar[d]\\
Z\ar[r] & W
}\]
of spaces and maps fibred over $B$ is a \emph{fibrewise pushout square} if the induced fibrewise $Y\cup_X Z\to W$ is a homeomorphism.
\end{definition}

\begin{definition}
The fibrewise space $X$ is \emph{fibrewise compact} if $p\colon X\to B$ is proper, or in other words if $p$ is closed and has compact fibres.
It is \emph{fibrewise Hausdorff} if distinct points in the same fibre can be separated by open sets of $X$; this is always the case if $X$ itself is Hausdorff.
\end{definition}

\begin{proposition}[{\cite[3.7]{\James}}]\label{ContinuityProposition}
Let $X$ and $Y$ be fibrewise compact Hausdorff spaces.
A continuous bijection $\phi\colon X\to Y$ is in fact a homeomorphism.
If $\phi\colon X\to Y$ is a continuous fibrewise surjection, then a fibrewise function $\psi\colon Y\to Z$ is continuous if and only if the composite $\psi\circ\phi$ is continuous.
\end{proposition}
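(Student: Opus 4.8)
The plan is to deduce both assertions from one fibrewise point-set fact: \emph{every continuous map $\phi\colon X\to Y$ over $B$ with $X$ fibrewise compact and $Y$ fibrewise Hausdorff is a closed map}. Granting this, the first assertion is immediate, because a continuous closed bijection has an inverse that carries closed sets to closed sets and so is a homeomorphism. For the second assertion, a continuous closed surjection is a quotient map: if $V\subseteq Y$ satisfies $\phi^{-1}(V)$ open, then $\phi\bigl(X\setminus\phi^{-1}(V)\bigr)$ is closed, and surjectivity of $\phi$ identifies this set with $Y\setminus V$, so $V$ is open. Once $\phi$ is a quotient map, a fibrewise function $\psi\colon Y\to Z$ pulls back open sets to open sets exactly when the composite $\psi\circ\phi$ does, which is precisely the stated equivalence.

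Thus everything reduces to the fibrewise fact above, which I would obtain from three fibrewise analogues of classical results: (i) a closed subspace of a fibrewise compact space is fibrewise compact; (ii) the image of a fibrewise compact space under a continuous map over $B$ is fibrewise compact in the subspace topology; (iii) a fibrewise compact subspace of a fibrewise Hausdorff space is closed. Then, for a closed $C\subseteq X$, item (i) makes $C$ fibrewise compact, item (ii) makes $\phi(C)$ fibrewise compact, and item (iii) makes $\phi(C)$ closed in $Y$; hence $\phi$ carries closed sets to closed sets.

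Items (i) and (ii) are routine. Recall that fibrewise compactness of $X$ means the projection $p\colon X\to B$ is proper, i.e.\ closed with compact fibres. Restricting $p$ to a closed subspace replaces each fibre by a closed, hence compact, subset, and keeps the projection closed since a set closed in a closed subspace is closed in the whole space; this gives (i). For (ii), the fibres of the image become continuous images of compacta and so are compact, while for $D$ closed in $\phi(X)$ one has $q(D)=p\bigl(\phi^{-1}(D)\bigr)$, which is closed; this gives (ii).

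Item (iii) is the substantive point, and the one I expect to need the most care; it is essentially the argument behind \cite[3.7]{\James}. Given $A\subseteq Y$ fibrewise compact and a point $y\in Y\setminus A$ lying over $b\in B$, the fibre $A_b=A\cap Y_b$ is a compact subset of the Hausdorff space $Y_b$ not containing $y$, so fibrewise Hausdorffness lets me cover $A_b$ by finitely many open sets of $Y$ whose union $U$ is disjoint from some open neighbourhood $V$ of $y$. Since $A\setminus U$ is closed in $A$, it is fibrewise compact, and its image under the proper projection $q|_A$ is a closed subset of $B$ not containing $b$; letting $W$ be the complement of that image, the set $V\cap q^{-1}(W)$ is an open neighbourhood of $y$ disjoint from $A$, so $A$ is closed. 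Since this is precisely \cite[3.7]{\James}, in the paper the statement is simply quoted from there rather than reproved.
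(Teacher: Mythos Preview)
Your argument is correct, and you have already anticipated the comparison: the paper does not prove this proposition at all, but simply records it with the citation to \cite[3.7]{\James} and moves on. Your reduction to the single fact that a continuous fibrewise map from a fibrewise compact space to a fibrewise Hausdorff space is closed, followed by the three-step verification (closed subspaces stay fibrewise compact, images stay fibrewise compact, fibrewise compact subspaces of fibrewise Hausdorff spaces are closed), is exactly the standard route and matches what one finds in James.
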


\begin{corollary}\label{FibrewisePushoutCorollary}
Suppose given a commutative square
\[\xymatrix{
X\ar[r]\ar[d] & Y\ar[d]\\
Z\ar[r] & W
}\]
of fibrewise compact Hausdorff spaces fibred over $B$, and suppose that in each fibre the square is a pushout.
Then the square is a fibrewise pushout.
Moreover, a fibrewise function $W\to V$ is continuous if and only if the composites $Y\to V$ and $Z\to V$ are continuous.
\end{corollary}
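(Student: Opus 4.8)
The plan is to show that $W$, together with the given maps from $Y$ and $Z$, realizes the ordinary pushout $P=Y\cup_X Z$ formed in spaces over $B$ (which is automatically a fibrewise pushout), by exhibiting a comparison map $q\colon P\to W$ and upgrading it to a homeomorphism via Proposition~\ref{ContinuityProposition}; the \emph{moreover} clause then falls out of the universal property of $P$ as a quotient.

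First I would form $P=(Y\sqcup Z)/{\sim}$, the quotient by the relation generated by identifying the two images of each point of $X$, regarded as a space over $B$ via the obvious projection. The maps $Y\to W$ and $Z\to W$ agree on $X$, hence induce a continuous fibrewise map $q\colon P\to W$. Because $X\to Y$ and $X\to Z$ are fibrewise, the relation $\sim$ identifies only points lying in a common fibre, so on the level of sets $P$ is the disjoint union over $b\in B$ of the set pushouts $Y_b\cup_{X_b}Z_b$; the hypothesis that the square is a pushout in each fibre identifies each of these with $W_b$, so $q$ is a bijection.

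Next I would verify that $P$ is fibrewise compact Hausdorff, so that the first part of Proposition~\ref{ContinuityProposition} applies to the continuous bijection $q$. For fibrewise compactness: $Y\sqcup Z$ is fibrewise compact Hausdorff, the quotient map $r\colon Y\sqcup Z\to P$ is a continuous surjection, and $Y\sqcup Z\to B$ is proper; since for closed $C\subseteq P$ one has $r^{-1}(C)$ closed and $(P\to B)(C)=(Y\sqcup Z\to B)(r^{-1}(C))$, the map $P\to B$ is closed, and its fibres are continuous images of the compact fibres of $Y\sqcup Z\to B$, hence $P\to B$ is proper. For fibrewise Hausdorffness I would use $q$ itself: distinct points of a fibre $P_b$ have distinct images in $W_b$ since $q$ is injective, and these can be separated by open sets of $W$ because $W$ is fibrewise Hausdorff, so their $q$-preimages separate the original points in $P$. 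With $P$ and $W$ both fibrewise compact Hausdorff, Proposition~\ref{ContinuityProposition} promotes $q$ to a homeomorphism, so the square is a fibrewise pushout. For the \emph{moreover} clause, identify $W$ with $P$ and apply the second part of Proposition~\ref{ContinuityProposition} to the continuous fibrewise surjection $r\colon Y\sqcup Z\to W$ of fibrewise compact Hausdorff spaces: a fibrewise function $W\to V$ is continuous iff its composite with $r$ is, i.e.\ iff its restrictions to $Y$ and to $Z$, namely the composites $Y\to V$ and $Z\to V$, are continuous.

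The only genuinely delicate point is the fibrewise Hausdorffness of $P$: a quotient of a fibrewise Hausdorff space need not be fibrewise Hausdorff, so one cannot argue intrinsically inside $P$ and must instead exploit that $q$ is an injection into the fibrewise Hausdorff space $W$. Everything else is a routine application of the universal property of pushouts, elementary properties of proper maps, and the already-established Proposition~\ref{ContinuityProposition}.
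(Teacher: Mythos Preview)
Your argument is correct and is exactly the intended derivation: the paper states this result as an immediate corollary of Proposition~\ref{ContinuityProposition} without giving a proof, and your construction of the comparison map $q\colon P\to W$, verification that $P$ is fibrewise compact Hausdorff, and appeal to both parts of Proposition~\ref{ContinuityProposition} is precisely how one unpacks that word ``Corollary''. Your observation that fibrewise Hausdorffness of $P$ must be borrowed from $W$ via the injection $q$, rather than proved intrinsically, is the one non-formal point and you handle it correctly.
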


\begin{definition}[{\cite[p.63]{\James}}]
Let $X$ and $Y$ be spaces fibred over $B$.
The \emph{fibrewise mapping space} $\Map_B(X,Y)$ is the space of pairs $(b,f)$, where $b\in B$ and $f\colon X_b\to Y_b$ is continuous.
It is equipped with the topology generated by basic open sets $(K,V;W)$.
Here $W$ is an open subset of $B$, $K$ is a fibrewise compact subspace of $X_W$, and $V$ is an open subspace of $Y_W$.
Then $(K,V;W)$ consists of all pairs $(b,f)$ for which $b\in W$ and $f(K_b)\subset V_b$.
\end{definition}

\begin{proposition}[{\cite{\James}}]\label{FibrewiseMappingProposition}
Let $B$ be a topological space and let $X,Y,Z$ and $X_1,X_2$ be fibrewise compact Hausdorff spaces over $B$.

\begin{enumerate}
\item
Let $\theta\colon X\to Y$ and $\phi\colon Y\to Z$ be fibrewise maps.
Then there are fibrewise maps
\[\phi_\ast\colon\Map_B(X,Y)\to\Map_B(X,Z),\qquad \theta^\ast\colon\Map_B(Y,Z)\to\Map_B(X,Z).\]

\item
If $\theta$ above is a fibrewise surjection then $\theta^\ast$ is an embedding.

\item
The bijection
\[\Map_B(X_1\sqcup X_2,Y)\to\Map_B(X_1,Y)\times_B\Map_B(X_2,Y)\]
is a fibrewise homeomorphism.

\item
Take a map $F\colon A\to B$.
Then there is a continuous map
\[\Map_A(F^\ast X,F^\ast Y)\to\Map_B(X,Y)\]
sending $(a,f)$ to $(F(a),f)$.

\item
A fibrewise map
\[h\colon X\times_B Y\to Z\]
is continuous if and only if its adjoint
\[\hat h\colon X\to\Map_B(Y,Z)\]
is continuous.

\item
Let $C$ be a further space and regard $X\times C$, $Y\times C$ as fibred over $B\times C$.
Then the map
\[\Map_B(X,Y)\times C\to\Map_{B\times C}(X\times C,Y\times C)\]
sending $((b,f),c)$ to $((b,c),f)$ is continuous.

\item
Let $P$ and $Q$ be topological spaces, with $P$ compact Hausdorff.
Then the isomorphism of sets
\[\Map_B(B\times P,B\times Q)\cong B\times\Map(P,Q)\]
is a homeomorphism.
\end{enumerate}
\end{proposition}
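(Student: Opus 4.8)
The plan is to treat Proposition~\ref{FibrewiseMappingProposition} as a package of standard facts about fibrewise mapping spaces, each of which is either recorded in, or an immediate consequence of, the theory in \cite{\James}; I would supply a short argument only where the statement is not already there verbatim. Parts 1--4 are the fibrewise analogues of routine properties of ordinary mapping spaces: the induced maps $\phi_\ast$ and $\theta^\ast$ of part 1; the fact that $\theta^\ast$ is an embedding when $\theta$ is a fibrewise surjection (part 2); the distributivity homeomorphism $\Map_B(X_1\sqcup X_2,Y)\cong\Map_B(X_1,Y)\times_B\Map_B(X_2,Y)$ of part 3; and the pullback map of part 4. Each of these I would quote from \cite{\James}. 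Part 5 is the fibrewise exponential law and is the one substantial input: it holds because $Y$ is fibrewise compact Hausdorff, so that fibrewise evaluation is continuous and currying is an equivalence, and I would use it in the form that for fibrewise compact Hausdorff $Y$ and an arbitrary fibrewise space $W$ a fibrewise map $W\times_B Y\to Z$ is continuous if and only if its adjoint $W\to\Map_B(Y,Z)$ is. Before continuing I would record that a fibrewise product of fibrewise compact Hausdorff spaces over $B$ is again fibrewise compact Hausdorff, and that $X\times C$ is fibrewise compact Hausdorff over $B\times C$ whenever $X$ is so over $B$; this keeps the hypotheses of part 5 met at each stage below.

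For part 6 I would argue by adjunction. View $X\times C$ and $Y\times C$ as fibred over $B\times C$, and $\Map_B(X,Y)\times C$ as fibred over $B\times C$ via the projection of $\Map_B(X,Y)$ to $B$ together with the identity on $C$. By part 5 the map in question is continuous if and only if its adjoint
\[\bigl(\Map_B(X,Y)\times C\bigr)\times_{B\times C}\bigl(X\times C\bigr)\;=\;\bigl(\Map_B(X,Y)\times_B X\bigr)\times C\longrightarrow Y\times C\]
is continuous; but this adjoint is $\mathrm{ev}\times\mathrm{Id}_C$, where $\mathrm{ev}\colon\Map_B(X,Y)\times_B X\to Y$ is the fibrewise evaluation, continuous by part 5 applied to $\mathrm{Id}_{\Map_B(X,Y)}$. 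Hence part 6.

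For part 7 I would check the two halves of the bijection $\Map_B(B\times P,B\times Q)\cong B\times\Map(P,Q)$ separately. The map $B\times\Map(P,Q)\to\Map_B(B\times P,B\times Q)$ is continuous by the adjunction of part 5: its adjoint $(B\times P)\times_B\bigl(B\times\Map(P,Q)\bigr)=B\times\bigl(P\times\Map(P,Q)\bigr)\to B\times Q$ is $\mathrm{Id}_B$ times the ordinary evaluation $P\times\Map(P,Q)\to Q$, which is continuous since $P$ is compact Hausdorff. In the other direction, $\Map_B(B\times P,B\times Q)\to B$ is continuous by the definition of the fibrewise mapping space, while the projection to $\Map(P,Q)$ is continuous because, for compact $L\subseteq P$ and open $O\subseteq Q$, the preimage of the subbasic set $\{g\mid g(L)\subseteq O\}$ is precisely the basic open set $(B\times L,\,B\times O;\,B)$ of $\Map_B(B\times P,B\times Q)$ --- here $B\times L$ is fibrewise compact over $B$ because $L$ is compact. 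Since the two maps are mutually inverse continuous bijections, part 7 follows.

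The only genuine obstacle is part 5, the fibrewise exponential law, which I would not reprove but import from \cite{\James}; once it is in hand, parts 6 and 7 reduce to the adjunction arguments above together with an explicit inspection of subbasic open sets. The one point requiring care is the bookkeeping of the fibrewise-compact-Hausdorff hypotheses, so that part 5 is legitimately applicable at every step.
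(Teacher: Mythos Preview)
Your proposal is correct and, for parts 1--5, matches the paper's approach of citing \cite{\James} (the paper also notes that fibrewise compact Hausdorff implies fibrewise locally compact and fibrewise regular, which is what James actually needs). The genuine difference is in part~6. The paper proves it by a direct point-set argument: given a basic open set $(K,V;W)$ in $\Map_{B\times C}(X\times C,Y\times C)$ and a point $((b,f),c)$ in its preimage, it shrinks $W$, uses fibrewise regularity (James~3.14) to thicken $K_{(b,c)}$ to a fibrewise compact $\overline J$ with $\overline J_b\subset f^{-1}(V_1)$, and produces an explicit basic neighbourhood $(\overline J,V_1;W_1)\times W_2$ mapping into $(K,V;W)$. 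Your route via the exponential law is cleaner and more conceptual: the adjoint of the map in question is $\mathrm{ev}\times\mathrm{Id}_C$, and continuity follows immediately once you know evaluation is continuous. The only cost is that you must invoke part~5 in the form where the domain $W$ is an \emph{arbitrary} fibrewise space (since $\Map_B(X,Y)\times C$ is not fibrewise compact); you flag this, and it is indeed how the exponential law is stated in James, but it is worth being explicit that the direction you need ($h$ continuous $\Rightarrow$ $\hat h$ continuous) is the one requiring no hypothesis on $W$. For part~7 the paper deduces one direction from part~6 (specialising to a point base) rather than from part~5 as you do, but your direct subbasic argument for the forward map $\Map_B(B\times P,B\times Q)\to B\times\Map(P,Q)$ is perfectly good and arguably more transparent than what the paper sketches.
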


\begin{proof}
\emph{All} references in this proof are to \cite{\James}.
Note that 3.22 and the comments after 3.12 guarantee that a fibrewise compact fibrewise Hausdorff space is also fibrewise locally compact and fibrewise regular.
Then parts 1, 2, 3, 5 and 6 are found in p.69, 9.4, 9.6, 9.7 and 9.13 respectively.
Part 4 is an immediate consequence of the definitions.

For part 6 take a standard open set $(K,V;W)$ in $\Map_{B\times C}(X\times C,Y\times C)$ and an element $((b,f),c)$ of its preimage.
We will find a neighbourhood of $((b,f),c)$ whose image is contained in $(K,V;W)$.
To do this we will reduce $(K,V;W)$ many times.
By reducing $W$ and $V$, assume that $W=W_1\times W_2$ and $V=V_1\times W_2$; this can be done because $K_{(b,c)}$ is compact.
Now using 3.14 and reducing $W_1$ if needed, find an open subset $J$ of $X_{W_1}$ such that $K_{(b,c)}\subset J$ and $\overline{J}_{b}\subset f^{-1}(V_1)$, where $\overline J$ is fibrewise compact over $W_1$.
Thus $K_{W_1\times W_2}\setminus J\times W_2$ is fibrewise compact over $W_1\times W_2$ and in particular its image in $W_1\times W_2$ is closed.
But this image does not contain $(b,c)$, and so by reducing $W_1$ and $W_2$ we may assume that $K_{W_1\times W_2}\subset J\times W_2\subset \overline J\times W_2$.
Now the open set $(\overline J, V_1;W_1)\times W_2$ contains $((b,f),c)$ and its image is contained in $(K,V;W)$ as required.

For the final part, continuity of $f\colon \Map_B(B\times P,B\times Q)\to B\times\Map(P,Q)$ follows from part 6, and continuity of its inverse follows from part 7.
\end{proof}

\end{document}